\tikzstyle{simple}=[-,line width=2.000]
\tikzstyle{arrow}=[-,postaction={decorate},decoration={markings,mark=at position .5 with {\arrow{>}}},line width=1.100]
\tikzstyle{none}=[inner sep=0pt]
\tikzstyle{species}=[circle,fill=yellow,draw=black,scale=1.15]
\tikzstyle{transition}=[rectangle,fill=lblue,draw=black,scale=1.15]
\tikzstyle{inarrow}=[->, >=stealth, shorten >=.03cm,line width=1.5]
\tikzstyle{empty}=[circle,fill=none, draw=none]
\tikzstyle{inputdot}=[circle,fill=purple,draw=purple, scale=.25]
\tikzstyle{inputarrow}=[->,draw=purple, shorten >=.05cm]
\tikzstyle{simple}=[-,draw=purple,line width=1.000]
\def\DEBUG{true} %comment out to get rid of colors
\ifdefined\DEBUG{}
\tikzset{snake it/.style={decorate, decoration=snake}}
\definecolor{parchment}{RGB}{214, 204, 169}
\theoremstyle{plain}%-----------------------
\newtheorem{theorem}{Theorem}[section]
\newtheorem{lemma}[theorem]{Lemma}
\newtheorem{proposition}[theorem]{Proposition}
\theoremstyle{definition}
\newtheorem{definition}[theorem]{Definition}
\newtheorem{notation}[theorem]{Notation}
\newtheorem{note}[theorem]{Note}
\newtheorem{aside}[theorem]{Aside}
\newcommand{\define}[1]{\textbf{#1}}
\newcommand{\cat}[1]{\mathsf{#1}}
\newcommand{\typesetcategory}[1]{\cat{#1}}
\DeclareMathOperator{\Cu}{\typesetcategory{Cu(T,D)}}
\DeclareMathOperator*{\colim}{colim}
\DeclareMathOperator{\Fa}{\mathcal{F}}  
\DeclareMathOperator{\Ga}{\mathcal{G}}
\DeclareMathOperator{\Ka}{\mathcal{K}}
\DeclareMathOperator{\Oa}{\mathcal{O}}  
\DeclareMathOperator{\Pa}{\mathcal{P}}
\DeclareMathOperator{\Ta}{\mathcal{T}}
\DeclareMathOperator{\Xa}{\mathcal{X}}
\def\@fnsymbol#1{\ensuremath{\ifcase#1\or
    *\or
    \dagger\or
    \ddagger\or
    \mathsection\or
    {\vdash}\or%\mathparagraph\or
    \else\@ctrerr\fi}}
\title{\fontfamily{lmss}\selectfont Towards a Unified Theory of Time-Varying Data}
\author{
    Benjamin Merlin Bumpus~\thanks{(\textit{Corresponding authors}.) \newline \indent{\;\;} University of Florida, Computer \& Information Science \& Engineering, Florida, USA.} 
    \and
    James Fairbanks\footnotemark[1]%~\thanks{University of Florida, Computer \& Information Science \& Engineering, Florida, USA. }
    \and
    Martti Karvonen~\thanks{University of Ottawa, Department of Mathematics, Canada.}
    \and
    Wilmer Leal\footnotemark[1]%\thanks{University of Florida, Computer \& Information Science \& Engineering, Florida, USA. }
    \and
    Frédéric Simard\footnotemark[3]~\thanks{University of Ottawa, School of Electrical Engineering and Computer Science, Canada.}
}
\date{Last compilation: \today}
\begin{document}

\maketitle

\newcommand\blfootnote[1]{%
  \begingroup
  \renewcommand\thefootnote{}\footnote{#1}%
  \addtocounter{footnote}{-1}%
  \endgroup
}

\begin{abstract}
What is a time-varying graph, a time-varying topological space, or, more generally, a mathematical structure that evolves over time? In this work, we lay the foundations for a general theory of temporal data by introducing \textit{categories of narratives}. These are sheaves on posets of time intervals that encode snapshots of a temporal object along with the relationships between them. This theory satisfies five desiderata distilled from the burgeoning field of time-varying graphs:
\begin{enumerate*}[label=\textbf{(D\arabic*)}]
\item it defines both time-varying objects and their morphisms;
\item it distinguishes between cumulative and persistent interpretations and provides principled methods for transitioning between them;
\item it systematically lifts static notions to their temporal analogues;
\item it is object agnostic;
\item it integrates with theories of dynamical systems.
\end{enumerate*}
To achieve this, we build upon existing categorical and sheaf-theoretic approaches to temporal graph theory, generalizing them to any category with limits and colimits. We also formalize tacit intuitions that, while present, often remain implicit in temporal graph theory. Beyond synthesizing and reformulating existing ideas in categorical language, we introduce sheaf-theoretic constructions and prove results that, to our knowledge, have not appeared in the temporal data literature—such as the adjunction between persistent and cumulative narratives. More importantly, we integrate these existing and novel elements into a consistent and coherent framework, setting the stage for a unified theory of time-varying data.    
\end{abstract}

%%%%%%%%%%%%%%%%%%%%%%%%%%%

%%%%%%%%%%%%%%% End of first page %%%%%%%%%%%%%%%%%%%%%

\maketitle
\section{Introduction}\label{sec:intro}
We can never fully observe the underlying dynamics that govern nature. Instead we are left with two approaches; we call these: the `method of axioms' and `method of data'. The first focuses on establishing mechanisms (specified via, for example, differential equations or automata) that align with our experience of the hidden dynamics we seek to study. 
On the other hand, the `method of data' emphasizes empirical observations, discerning appropriate mathematical structures that underlie the observed time-varying data and extracting meaningful insights into the time-varying system. Both of these approaches are obviously interlinked, yet the lack of a formal, unifying framework for time-varying data structures prevents us from making this connection explicit. While significant progress has been made in fields such as graph theory and topological data analysis, a more encompassing theoretical framework is needed—one that unifies these perspectives and extends to other temporal data structures.

In studying the data we can collect over time, we limit ourselves to the `visible' aspects of the underlying dynamics. Thus, in much the same way as one can glean some (but perhaps not much) of the narrative of Romeo and Juliet by only reading a page of the whole, we view time-varying data as an observable narrative that tells a small portion of larger stories governed by more complex dynamics. This simple epistemological stance appears implicitly in many areas of mathematics concerned with temporal or time-varying data. For instance, consider the explosive birth of temporal graph theory. Here, one is interested in graphs whose vertices and edges may come and go over time. To motivate these models, one tacitly appeals to the connection between time-varying data and a hidden dynamical system that generates it. A common example in the field of temporal graphs is that of opportunistic mobility~\cite{FlocchiniQuattrocchiSantoroCasteigts}: physical objects in motion, such as buses, taxis, trains, or satellites, transmit information between each other at limited distances, and snapshots of the communication networks are recorded at various evenly-spaced instants in time. Further examples that assume the presence of underlying dynamics include human and animal proximity networks, human communication networks, collaboration networks, citation networks, economic networks, neuro-scientific networks, biological, chemical, ecological, and epidemiological networks~\cite{HARARY199779, OthonTempSurvey, HOLME201297, leal2022exploration, holme2015modern, FlocchiniQuattrocchiSantoroCasteigts}.

Although it is clear that what makes data \textit{temporal} is its link to an underlying dynamical system, this connection is in no way mathematically explicit and concrete. Indeed one would expect there to be further mathematical properties of temporal data which allow us to distinguish a mere \(\mathbb{N}\)-indexed sequence of sets or graphs or groups, say, from their temporal analogues. As of yet, though, this distinction has rarely been formally and systematically investigated. For example think of temporal graphs once again. Modulo embellishing attributes such as latencies or wait times, \textit{typical} definitions 
\footnote{As an example of a notable exception, see Kim and Mémoli~\cite{Kim2023}. For a more detailed discussion on how morphisms are considered in \cite{Kim2023}, see points one and two of Section~\ref{sec:desiderata}.} simply require temporal graphs to be sequences of graphs (e.g.~\cite{HARARY199779, OthonTempSurvey, HOLME201297, holme2015modern, FlocchiniQuattrocchiSantoroCasteigts, KEMPE2002820, deleting_edge_temporal_graphs_Kitty_and_Jess, assigning_times_restrict_disease_spread, jess_temp_gr_livestock_multispecies, jess_temporal_epidemics, Group_Formation_Backstrom, llanos19, Influence_social_networks_kempe, Kempe_distrib_systems}). There no further semantics on the relationships between time steps is imposed. And these definitions never explicitly state what kind of global information should be tracked by the temporal data: is it the total accumulation of data over time or is it the persistent structure that emerges in the data throughout the evolution of the underlying dynamical system? 

\paragraph{In this paper} we ask: \textit{how does one build a robust and general theory of temporal data?} To address this question, we first turn to the theory of time-varying graphs, a field that has received considerable attention in recent years~\cite{HARARY199779, OthonTempSurvey, HOLME201297, holme2015modern, FlocchiniQuattrocchiSantoroCasteigts, KEMPE2002820, deleting_edge_temporal_graphs_Kitty_and_Jess, assigning_times_restrict_disease_spread, jess_temp_gr_livestock_multispecies, jess_temporal_epidemics, Group_Formation_Backstrom, llanos19, Influence_social_networks_kempe, Kempe_distrib_systems}. This body of work has served both as a source of inspiration and as a foundation for our approach, offering concrete ideas, methodologies and perspectives that have shaped our understanding of time-varying structures. By analyzing the questions and techniques developed in this field, we seek to construct an abstract framework that captures the essential structures and hypotheses necessary for generalizing temporal data analysis across a broad spectrum of mathematical structures beyond graphs.

One key hypothesis that emerges from the study of time-varying graphs is that temporal data should not only record what occurred at various instants in time, but it should also keep track of the relationships between successive time-points.  In other words, what makes data \textit{temporal} is whether it is `in the memory'~\cite{sep-time-experience} in the sense of st Augustine's Confessions~\cite{augustine2016confessionsVol-1, augustine2016confessionsVol-2}. Hidden in this seemingly simple statement, is the structure of a \textit{sheaf}: a temporal set (or graph or group, etc.) should consist of an assignment of a data set at each time point together with \textit{consistent} assignments of sets over each \textit{interval of time} in such a way that the sets assigned on intervals are determined by the sets assigned on subintervals. The sheaf-theoretic perspective we adopt here builds upon Schultz, Spivak and Vasilakopoulou's~\cite{schultz2017temporal} notion of an interval sheaf and it allows for a very general definition of temporal objects. 

\paragraph{Our contribution} is twofold; first we distill the lessons learned from temporal graph theory into the following set of \textit{desiderata} for any mature theory of temporal data: 
\begin{enumerate}[label=\textbf{(D\arabic*)}]
    \item (\textbf{Categories of Temporal Data}) Any theory of temporal data should define not only time-varying data, but also appropriate morphisms thereof.\label{desideratum-1}
    \item (\textbf{Cumulative and Persistent Perspectives}) In contrast to being a mere sequence, temporal data should explicitly record whether it is to be viewed cumulatively or persistently. Furthermore there should be methods of conversion between these two viewpoints.\label{desideratum-2}  
    \item (\textbf{Systematic `Temporalization'}) Any theory of temporal data should come equipped with systematic ways of obtaining temporal analogues of notions relating to static data.\label{desideratum-3} 
    \item (\textbf{Object Agnosticism}) Theories of temporal data should be object agnostic and applicable to any kinds of data originating from given underlying dynamics.\label{desideratum-4} 
    \item (\textbf{Sampling}) Since temporal data naturally arises from some underlying dynamical system, any theory of temporal data should be seamlessly interoperable with theories of dynamical systems.\label{desideratum-5} 
\end{enumerate}
Our second main contribution is to introduce \textit{categories of narratives}, an object-agnostic theory of time-varying objects which satisfies the desiderata mentioned above. 
As a benchmark, we then observe how standard ideas of temporal graph theory crop up naturally when our general framework of temporal objects is instantiated on graphs. 

We choose to see this task of theory-building through a category theoretic lens for three reasons. First of all this approach directly addresses our first desideratum \ref{desideratum-1}, namely that of having an explicit definition of \textit{isomorphisms} (or more generally \textit{morphisms}) of temporal data. Second of all, we adopt a category-theoretic approach because its emphasis, being not on objects, but on the \textit{relationships} between them~\cite{riehl2017category, awodey2010category}, makes it particularly well-suited for general, object-agnostic definitions. Thirdly, \textit{sheaves}, which are our main technical tool in the definition of time-varying data, are most naturally studied in category theoretic terms~\cite{rosiak-book, maclane2012sheaves}. 

\subsection{Previous and Related Work: Accumulating Desiderata for a General Theory of Temporal Data} \label{sec:desiderata}

There are as many different definitions of temporal graphs as there are application domains from which the notion can arise. This has lead to a proliferation of many subtly different concepts such as: \emph{temporal graphs}, \emph{temporal networks}, \emph{dynamic graphs}, \emph{evolving graphs} and \emph{time-varying graphs}~\cite{HARARY199779, OthonTempSurvey, HOLME201297, holme2015modern, FlocchiniQuattrocchiSantoroCasteigts, KEMPE2002820}. Each model of temporal graphs makes different assumptions on what may vary over time. For example, are the vertices fixed, or may they change? Does it take time to cross an edge? And does this change as an edge appears and disappears? If an edge reappears after having vanished at some point in time, in what sense has it returned, is it the same edge?

The novelty of these fields and the many fascinating directions for further inquiry they harbor make the mathematical treatment of temporal data exciting. However, precisely because of the field's youth, we believe that it is crucial to pause and distill the lessons we have learned from temporal graphs into desiderata for the field of temporal data more broadly. In what follows we shall briefly contextualize each desideratum mentioned above in turn while also signposting  previous work and how our framework addresses each point. We begin with \ref{desideratum-1}.  

\begin{enumerate}

%Morphisms of temporal graphs are rarely taken as a central notion\footnote{In \cite{Kim2023}, for example, morphisms between temporal graphs are not directly mentioned, let alone used: the authors comment in [Table 3]\cite{Kim2023} that cosheaves (which come equipped with a notion of morphism) valued in the poset of subobjects of a fixed complete graph could be understood as temporal graphs.}, let alone defined formally in the first place. 

\item Morphisms of temporal graphs are rarely treated as a central concept, let alone formally defined and systematically exploited to gain insights or prove new results. In~\cite{Kim2023}, for example, the authors note in [Table 3]\cite{Kim2023} that cosheaves in the poset of subobjects of a fixed complete graph can be interpreted as cumulative temporal graphs. This is a beautiful idea that we will systematically investigate, extend, and connect with other aspects of temporal data (see Section \ref{sec:background}). However, morphisms are only implicitly recognized in~\cite{Kim2023} (since cosheaves inherently come with a notion of morphism), and their potential for yielding technical results and deeper insights into temporal structures remains largely unexplored in both this work and the broader literature. This is a serious impediment to the generalization of the ideas of temporal graphs to other time-varying structures since any such general theory should be invariant under isomorphisms. Thus we distill our first desideratum \ref{desideratum-1}: theories of temporal data should not only concern themselves with what time-varying data is, but also with what an appropriate notion of \textit{morphism} of temporal data should be. %Moreover, this notion should be central to such a theory.
These morphisms will enable analysis of temporal objects.

Narratives, our definition of time-varying data (Definition~\ref{def:narratives}), are stated in terms of certain kinds of sheaves. This immediately addresses desideratum~\ref{desideratum-1} since it automatically equips us with a suitable and well-studied~\cite{rosiak-book, maclane2012sheaves} notion of a morphism of temporal data, namely \textit{morphisms of sheaves}. Then, by instantiating narratives on graphs in Section~\ref{sec:examples-of-narratives}, we define \textit{categories} of temporal graphs as a special case of the broader theory.  This notion of morphisms is central to accomplishing \textit{each} of the desiderata.  This is especially true for Theorem \ref{thm:adjunction}, where we define an adjunction between the persistent and cumulative perspectives \ref{desideratum-2}; and also in Section \ref{sec:lifting-properties}, where classes of temporal objects are defined in terms of which morphisms of temporal objects they admit \ref{desideratum-3}.      

\item Our second desideratum is born from observing that most current definitions of temporal graphs are equivalent to mere sequences of graphs~\cite{FlocchiniQuattrocchiSantoroCasteigts, KEMPE2002820} (snapshots) without explicit mention of how each snapshot is related to the next (see below for a notable exception in TDA). To understand the importance of this observation, we must first note that in any theory of temporal graphs, one always finds great use in relating time-varying structure to its older and more thoroughly studied \textit{static} counterpart. For instance, any temporal graph is more or less explicitly assumed to come equipped with an \textit{underlying static graph}~\cite{FlocchiniQuattrocchiSantoroCasteigts, KEMPE2002820}. This is a graph consisting of all those vertices and edges that were ever seen to appear over the course of time and it should be thought of as the result of \textit{accumulating} data into a static representation. Rather than being presented as part and parcel of the temporal structure, the underlying static graphs are presented as the result of carrying out a computation—that of taking unions of snapshots—involving input temporal graphs. The implicitness of this representation has two drawbacks. The first is that it does not allow for vertices or edges to \textit{merge} or \textit{divide} over time; these are very natural operations that one should expect of time-varying graphs in the `wild' (think for example of cell division or acquisitions or merges of companies). The second drawback of the implicitness of the computation of the underlying static graph is that it conceals another very natural static structure that always accompanies any given temporal graph, we call it the \textit{persistence graph}. This is the static graph consisting of all those vertices and edges which persisted throughout the entire life-span of the temporal graph. We distill this general pattern into desideratum~\ref{desideratum-2}: temporal data should come explicitly equipped with either a \textit{cumulative} or a \textit{persistent} perspective which records which information we should be keeping track of over intervals of time.

Outside of the temporal graph community, in topological data analysis the authors of \cite{Kim2023} use cosheaves over intervals to define a time-varying graph whose snapshots are all subgraphs of a fixed, globally-defined complete graph. In our language, this is an instance of a cumulative narrative valued in the subobject poset of $K_n$ for some $n$. Kim and Memoli~\cite{Kim2023} are interested in applications of this cosheaf perspective for uses in TDA and do not consider persistent time-varying graphs (sheaves over intervals) and the relationship between these two kinds of temporal graphs. Our work differs from \cite{Kim2023} in three fundamental ways: (1) we define cumulative narratives not just for subobject posets of the complete graphs, but for any category with colimits\footnote{Notice that the approach in~\cite{Kim2023} does not allow merging of vertices or edges since the corestriction maps of such cosheaves must necessarily be injections. In contrast, our definition of cumulative time-varying graphs as cosheaves valued $\cat{Grph}$ (the category of graphs and their homomorphisms) allows for merging and splitting of vertices and edges over time.}; (2) we introduce persistent time-varying graphs and, more generally, persistent narratives valued in any category with limits; and (3) we establish a formal connection between these two perspectives using categorical duality. Specifically, we prove that there is an adjunction between the categories of persistent and cumulative narratives, providing a principled way to satisfy desideratum~\ref{desideratum-2}: sheaves encode the persistence perspective, while cosheaves—the dual of a sheaf—encode the accumulation perspective. As we will show, while these two perspectives give rise to equivalences on certain subcategories of temporal graphs, in general, when one passes to arbitrary categories of temporal objects—such as temporal groups, for example—this equivalence weakens to an \textit{adjunction} (this is Theorem~\ref{thm:adjunction}; roughly one can think of this as a Galois connection~\cite{seven_sketches}). In particular our results imply that in general there is the potential for a loss of information when one passes from one perspective (the persistent one, say) to another (the cumulative one) and back again. This observation, which has so far been ignored, is of great practical relevance since it means that one must take a great deal of care when collecting temporal data since the choices of mathematical representations may not be interchangeable. We will prove the existence of the adjunction between cumulative and persistent temporal data (for any category with pullbacks and pushouts, such as the category of graphs) in Theorem~\ref{thm:adjunction} and discuss all of these subtleties in Section~\ref{sec:cumulative-vs-persistent}. Furthermore, this adjunction opens interesting directions for future work investigating the relationship between the persistent and cumulative perspectives present in topological data analysis; for instance, the program of `generalized persistence' initiated by Patel and developed in the work of Kim and Memoli~\cite{Kim2023}.

\item Another common theme arising in temporal graph theory is the relationship between properties of static graphs and their temporal analogues. At first glance, one might na{\"i}vely think that static properties can be canonically lifted to the temporal setting by simply defining them in terms of underlying static graphs. However, this approach completely forgets the temporal structure and is thus of no use in generalizing notions such as for example connectivity or distance where temporal information is crucial to the intended application~\cite{OthonTempSurvey,FlocchiniQuattrocchiSantoroCasteigts,deleting_edge_temporal_graphs_Kitty_and_Jess,bumpus2022edge}.
Moreover, the lack of a systematic procedure for `\textit{temporalizing}' notions from static graph theory is more than an aesthetic obstacle. It fuels the proliferation of myriads of subtly different temporal analogues of static properties. For instance should a temporal coloring be a coloring of the underlying static graph?  
What about the underlying persistence graph? Or should it instead be a sequence of colorings? And should the colorings in this sequence be somehow related? 
Rather than accepting this proliferation as a mere consequence of the greater expressiveness of temporal data, we sublime these issues into desideratum~\ref{desideratum-3}: any theory of temporal data should come equipped with a systematic way of `temporalizing' notions from traditional, static mathematics. 

In Section \ref{sec:lifting-properties}, we show how our theories of narratives satisfies desideratum~\ref{desideratum-3}. We do so systematically by leveraging two simple, but effective \textit{functors}: the change of \textit{temporal resolution} functor (Proposition~\ref{prop:change-temp-resolution}) and the \textit{change of base} functor (Propositions~\ref{prop:change-of-base-covariant} and~\ref{prop:change-of-base-contravariant}). The first allows us to modify narratives by rescaling time, while the second allows us to change the kind of data involved in the narrative (e.g. passing from temporal simplicial complexes to temporal graphs). Using these tools, we provide a general way for temporalizing static notions which roughly allows one to start with a class of objects which satisfy a given property (e.g. the class of paths, if one is thinking about temporal graphs) and obtain from it a class of objects which temporally satisfy that property (e.g. the notion of temporal paths). As an example (other than temporal paths which we consider in Proposition~\ref{prop:def:temporal-path}) we apply our abstract machinery to recover in a canonical way (Proposition~\ref{prop:temporal-cliques}) the notion of a temporal clique (as defined by Viard, Latapy and Magnien~\cite{viard2016computing}). Crucially, the only information one needs to be given is the definition of a clique (in the static sense). Summarizing this last point with a slogan, one could say that `our formalism can derive the definition of temporal cliques given solely the definition of a clique as input'. Although it is beyond the scope of the present paper, we believe that this kind of reasoning will prove to be crucial in the future for a systematic study of how theories of temporal data (e.g. temporal graph theory) relate to their static counterparts (e.g. graph theory). 

\item Temporal graphs are definitely ubiquitous forms of temporal data~\cite{HARARY199779, OthonTempSurvey, HOLME201297, holme2015modern, FlocchiniQuattrocchiSantoroCasteigts, KEMPE2002820}, but they are by far not the only kind of temporal data one could attach, or sample from an underlying dynamical system. Thus Desideratum~\ref{desideratum-4} is evident: to further our understanding of data which changes with time, we cannot develop case by case theories of temporal graphs, temporal simplicial complexes, temporal groups etc., but instead we require a general theory of temporal data that encompasses all of these examples as specific instances and which allows us to relate different kinds of temporal data to each other. 

Our theory of narratives addresses part of Desideratum~\ref{desideratum-4} almost out of the box: our category theoretic formalism is object agnostic and can be thus applied to mathematical objects coming from any such category thereof. We observe through elementary constructions that there are \textit{change of base} functors which allow one to convert temporal data of one kind into temporal data of another. Furthermore, we observe that, when combined with the adjunction of Theorem~\ref{thm:adjunction}, these simple data conversions can rapidly lead to complex relationships between various kinds of temporal data. 

\item As we mentioned earlier, our philosophical contention is that on its own data is not temporal; it is through originating from an underlying dynamical system that its temporal nature is distilled. This link can and should be made explicit. But until now the development of such a general theory is impeded by a great mathematical and linguistic divide between the communities which study dynamics axiomatically (e.g. the study of differential equations, automata etc.) and those who study data (e.g. the study of time series, temporal graphs etc.). Thus we distill our last Desideratum~\ref{desideratum-5}: any theory of temporal data should be seamlessly interoperable with theories of dynamical systems from which the data can arise. 

This desideratum is ambitious enough to fuel a research program and is thus beyond the scope of a single paper. However, for any such theory to be developed, one first needs to place both the theory of dynamical systems and the theory of temporal data on the same mathematical and linguistic footing. This is precisely how our theory of narratives addresses Desideratum~\ref{desideratum-5}: since both narratives (our model of temporal data) and Schultz, Spivak and Vasilakopoulou's \textit{interval sheaves}~\cite{schultz2017temporal} (a general formalism for studying dynamical systems) are defined in terms of sheaves on categories of intervals, we have bridged a significant linguistic divide between the study of data and dynamics. We expect this to be a very fruitful line of further research in the years to come. 

\end{enumerate}

Having identified the desiderata for a theory of time-varying data, we conclude this section by further contextualizing our work with other uses of sheaf theory to study time-varying phenomena. As we already mentioned, Schultz, Spivak and Vasilakopoulou~\cite{schultz2017temporal} study dynamical systems through a sheaf-theoretic lens; moreover, there have been other investigations of time-varying structures which use similar tools. An example within the applied topology and topological data analysis communities is the examination of connected components over time using Reeb graphs. For instance, in~\cite{deSilva2016}, the authors leverage the established fact that the category of Reeb graphs is equivalent to a certain class of cosheaves. This equivalence is exploited to define a distance between Reeb graphs, which proves to be resilient to perturbations in the input data. Furthermore, it serves the purpose of smoothing the provided Reeb graphs in a manner that facilitates a geometric interpretation. Similarly, the study of the persistence of topological features in point-cloud datasets has given rise to the formulation of the theory of persistence for `Zigzag diagrams'. This theory extends beyond persistent homology and also has a cosheaf interpretation~\cite{Curry2015, curry2014sheaves}.  Although it is beyond the scope of the current paper, we believe that exploring the connections between our work these notions from applied topology is an exciting direction for further study.

\section{Categories of Temporal Data}\label{sec:cats-of-temporal-data}
Our thesis is that temporal data should be represented mathematically via \textit{sheaves} (or \textit{cosheaves}, their categorical dual). Sheaf theory, already established in the 1950s as a crucial tool in algebraic topology, complex analysis, and algebraic geometry, is canonically the study of local-to-global data management. For our purposes here, we will only make shallow use of this theory; nevertheless, we anticipate that more profound sheaf-theoretic tools, such as cohomology, will play a larger role in the future study of temporal data. To accommodate readers from disparate backgrounds, we will slowly build up the intuition for why one should represent temporal data as a sheaf by first peeking at examples of \textit{temporal sets} in Section~\ref{sec:temporal-sets}. We will then formally introduce interval sheaves (Section~\ref{sec:background}) and immediately apply them by collecting various examples of categories of temporal graphs (Section~\ref{sec:examples-of-narratives}) before ascending to more abstract theory.

\subsection{Garnering Intuition: Categories of Temporal Sets.}\label{sec:temporal-sets}
Take a city, like Venice, Italy, and envision documenting the set of ice cream companies that exist in that city each year. For instance, in the first year, there might be four companies \(\{a_1, a_2, b, c\}\). One could imagine that from the first year to the next, company \(b\) goes out of business, company \(c\) continues into the next year, a new ice cream company \(b'\) is opened, and the remaining two companies \(a_1\) and \(a_2\) merge into a larger company \(a_\star\). This is an example of a \textit{discrete temporal set} viewed from the perspective of \textit{persistence}: not only do we record the sets of companies each year (which, in the $i$-th year, we denote as $F_i^i$), but instead we also keep track of which companies persist from one year to the next and how they do so (which we denote as $F_i^j$). Diagramatically we could represent the first three years of this story as follows.
\begin{equation}\label{diagram:companies-example}
\adjustbox{scale=1.5, max width=.95\textwidth}{%,center}{

% https://q.uiver.app/#q=WzAsNSxbMSwwLCJGXzFeMiA9IFxce2FfMSwgYV8yLGNcXH0iXSxbMywwLCJGXzJeMyA9IFxce2FfXFxzdGFyLCBiJ1xcfSJdLFswLDEsIkZfMV4xOj1cXHthXzEsIGFfMiwgYiwgY1xcfSJdLFsyLDEsIkZfMl4yOj1cXHthX1xcc3RhcixiJyxjXFx9Il0sWzQsMSwiRl8zXjM6PVxce2FfXFxzdGFyLGInLCBjJ1xcfSJdLFswLDIsInt7Zl97MSwyfV4xfX0iLDAseyJzdHlsZSI6eyJ0YWlsIjp7Im5hbWUiOiJob29rIiwic2lkZSI6ImJvdHRvbSJ9fX1dLFswLDMsInt7Zl97MSwyfV4yfX0iLDJdLFsxLDMsInt7Zl97MiwzfV4yfX0iLDAseyJzdHlsZSI6eyJ0YWlsIjp7Im5hbWUiOiJob29rIiwic2lkZSI6ImJvdHRvbSJ9fX1dLFsxLDQsInt7Zl97MiwzfV4zfX0iLDIseyJzdHlsZSI6eyJ0YWlsIjp7Im5hbWUiOiJob29rIiwic2lkZSI6InRvcCJ9fX1dXQ==
\begin{tikzcd}
	& {F_1^2 = \{a_1, a_2,c\}} && {F_2^3 = \{a_\star, b'\}} \\
	{F_1^1:=\{a_1, a_2, b, c\}} && {F_2^2:=\{a_\star,b',c\}} && {F_3^3:=\{a_\star,b', c'\}}
	\arrow["{{{f_{1,2}^1}}}", hook', from=1-2, to=2-1]
	\arrow["{{{f_{1,2}^2}}}"', from=1-2, to=2-3]
	\arrow["{{{f_{2,3}^2}}}", hook', from=1-4, to=2-3]
	\arrow["{{{f_{2,3}^3}}}"', hook, from=1-4, to=2-5]
\end{tikzcd}
}
\end{equation}

This is a diagram of sets and the arrows are functions between sets. In this example we have that \(f_{1,2}^1\) is the canonical injection of \(F_1^2\) into \(F_1^1\) while \(f_{1,2}^2\) maps \(c\) to itself and it takes both \(a_1\) and \(a_2\) to \(a_\star\) (representing the unification of the companies \(a_1\) and \(a_2\)). 

Diagram~\ref{diagram:companies-example} is more than just a time-series or a sequence of sets: it tells a story by relating (via functions in this case) the elements of successive snapshots. It is obvious, however, that from the relationships shown in Diagram~\ref{diagram:companies-example} we should be able to recover longer-term relationships between instances in time. For instance we should be able to know what happened to the four companies \(\{a_1, a_2, b, c\}\) over the course of three years: by the third year we know that companies \(a_1\) and \(a_2\) unified and turned into company \(a_\star\), companies \(b\) and \(c\) dissolved and ceased to exist and two new companies \(b'\) and \(c'\) were born. 

The inferences we just made amounted to determining the relationship between the sets \(F_1^1\) and \(F_3^3\) completely from the data specified by Diagram~\ref{diagram:companies-example}. Mathematically this is an instance of computing \(F_1^3\) as a \textit{fibered product} (or \textit{pullback}) of the sets \(F_1^2\) and \(F_2^3\) along functions \(f_{1,2}^2\) and \(f_{2,3}^2\):  

\[F_1^3 := \{ (x,y) \in F_1^2 \times F_2^3 \mid f_{1,2}^2(x) = f_{2,3}^2(y)\}.\] Diagrammatically this is drawn as follows. 
\begin{equation}\label{diagram:companies-SHEAF-example}
% https://q.uiver.app/#q=WzAsNixbMCwyLCJGXzFeMTo9XFx7YV8xLCBhXzIsIGIsIGNcXH0iXSxbNCwyLCJGXzNeMzo9XFx7YSxiJywgYydcXH0iXSxbMSwxLCJGXzFeMiA9IFxce2FfMSwgYV8yLGNcXH0iXSxbMiwyLCJGXzJeMjo9XFx7YV9cXHN0YXIsYicsY1xcfSJdLFszLDEsIkZfMl4zID0gXFx7YV9cXHN0YXIsIGInXFx9Il0sWzIsMCwiRl8xXjM9XFx7KGFfMSwgYV9cXHN0YXIpLCAoYV8yLCBhX1xcc3RhcilcXH0iXSxbMiwwLCJmX3sxLDJ9XjEiLDAseyJzdHlsZSI6eyJ0YWlsIjp7Im5hbWUiOiJob29rIiwic2lkZSI6ImJvdHRvbSJ9fX1dLFsyLDMsImZfezEsMn1eMiIsMl0sWzQsMywiZl97MiwzfV4yIiwwLHsic3R5bGUiOnsidGFpbCI6eyJuYW1lIjoiaG9vayIsInNpZGUiOiJib3R0b20ifX19XSxbNCwxLCJmX3syLDN9XjMiLDAseyJzdHlsZSI6eyJ0YWlsIjp7Im5hbWUiOiJob29rIiwic2lkZSI6InRvcCJ9fX1dLFs1LDIsImZfezEsM31eezEsMn0iLDAseyJzdHlsZSI6eyJ0YWlsIjp7Im5hbWUiOiJob29rIiwic2lkZSI6ImJvdHRvbSJ9fX1dLFs1LDQsImZfezEsM31eezIsM30iLDJdLFs1LDMsIiIsMSx7InN0eWxlIjp7Im5hbWUiOiJjb3JuZXIifX1dXQ==
\adjustbox{scale=1.5, max width=.95\textwidth}{%,center}{
\begin{tikzcd}
	&& {F_1^3=\{(a_1, a_\star), (a_2, a_\star)\}} \\
	& {F_1^2 = \{a_1, a_2,c\}} && {F_2^3 = \{a_\star, b'\}} \\
	{F_1^1:=\{a_1, a_2, b, c\}} && {F_2^2:=\{a_\star,b',c\}} && {F_3^3:=\{a,b', c'\}}
	\arrow["{f_{1,3}^{1,2}}", hook', from=1-3, to=2-2]
	\arrow["{f_{1,3}^{2,3}}"', from=1-3, to=2-4]
	\arrow["\lrcorner"{anchor=center, pos=0.125, rotate=-45}, draw=none, from=1-3, to=3-3]
	\arrow["{f_{1,2}^1}", hook', from=2-2, to=3-1]
	\arrow["{f_{1,2}^2}"', from=2-2, to=3-3]
	\arrow["{f_{2,3}^2}", hook', from=2-4, to=3-3]
	\arrow["{f_{2,3}^3}", hook, from=2-4, to=3-5]
\end{tikzcd}
}
\end{equation}

The selection of the aforementioned data structures, namely sets and functions, allowed us to encode a portion of the history behind the ice cream companies in Venice. If we were to delve deeper and investigate, for instance, why company $c$ disappeared, we could explore a cause within the dynamics of the relationships between ice cream companies and their suppliers. These relationships  can be captured using directed graphs, as illustrated in Diagram \ref{diagram:companies-graph}, where there is an edge from $x$ to $y$ if the former is a supplier to the latter. This diagram reveals that company $a_2$ not only sold ice cream but also supplied companies $a_1$ and $c$.  Notably, with the dissolution of company $b$ in the second year, it becomes conceivable that the closure of company $c$ occurred due to the cessation of its supply source.

% Diagram: graph example
%\input{diagrams/diagram2}
%\begin{equation}\label{diagram:companies-graph}
    %\includegraphics[width=0.9\linewidth]{diagrams/diagram-graph-narrative.pdf}
    \begin{equation}\label{diagram:companies-graph}
% https://q.uiver.app/#q=WzAsNixbMCwyLCJGXzFeMTo9XFx7YV8xLCBhXzIsIGIsIGNcXH0iXSxbNCwyLCJGXzNeMzo9XFx7YSxiJywgYydcXH0iXSxbMSwxLCJGXzFeMiA9IFxce2FfMSwgYV8yLGNcXH0iXSxbMiwyLCJGXzJeMjo9XFx7YV9cXHN0YXIsYicsY1xcfSJdLFszLDEsIkZfMl4zID0gXFx7YV9cXHN0YXIsIGInXFx9Il0sWzIsMCwiRl8xXjM9XFx7KGFfMSwgYV9cXHN0YXIpLCAoYV8yLCBhX1xcc3RhcilcXH0iXSxbMiwwLCJmX3sxLDJ9XjEiLDAseyJzdHlsZSI6eyJ0YWlsIjp7Im5hbWUiOiJob29rIiwic2lkZSI6ImJvdHRvbSJ9fX1dLFsyLDMsImZfezEsMn1eMiIsMl0sWzQsMywiZl97MiwzfV4yIiwwLHsic3R5bGUiOnsidGFpbCI6eyJuYW1lIjoiaG9vayIsInNpZGUiOiJib3R0b20ifX19XSxbNCwxLCJmX3syLDN9XjMiLDAseyJzdHlsZSI6eyJ0YWlsIjp7Im5hbWUiOiJob29rIiwic2lkZSI6InRvcCJ9fX1dLFs1LDIsImZfezEsM31eezEsMn0iLDAseyJzdHlsZSI6eyJ0YWlsIjp7Im5hbWUiOiJob29rIiwic2lkZSI6ImJvdHRvbSJ9fX1dLFs1LDQsImZfezEsM31eezIsM30iLDJdLFs1LDMsIiIsMSx7InN0eWxlIjp7Im5hbWUiOiJjb3JuZXIifX1dXQ==
\adjustbox{scale=1.5, max width=.95\textwidth}{%,center}{
\tikzset{every picture/.style={line width=0.75pt}} %set default line width to 0.75pt 
\begin{tikzpicture}[x=0.75pt,y=0.75pt,yscale=-1,xscale=1]
%uncomment if require: \path (0,165); %set diagram left start at 0, and has height of 165

%Rounded Rect [id:dp651255313932755] 
\draw  [color={rgb, 255:red, 255; green, 255; blue, 255 }  ,draw opacity=1 ][fill={rgb, 255:red, 255; green, 192; blue, 203 }  ,fill opacity=1 ] (593.36,115.84) .. controls (593.36,109.8) and (598.25,104.91) .. (604.29,104.91) -- (688.28,104.91) .. controls (694.32,104.91) and (699.21,109.8) .. (699.21,115.84) -- (699.21,148.62) .. controls (699.21,154.66) and (694.32,159.55) .. (688.28,159.55) -- (604.29,159.55) .. controls (598.25,159.55) and (593.36,154.66) .. (593.36,148.62) -- cycle ;
%Rounded Rect [id:dp964021196355761] 
\draw  [color={rgb, 255:red, 255; green, 255; blue, 255 }  ,draw opacity=1 ][fill={rgb, 255:red, 255; green, 192; blue, 203 }  ,fill opacity=1 ] (307.02,112.5) .. controls (307.02,106.46) and (311.91,101.57) .. (317.95,101.57) -- (428.42,101.57) .. controls (434.46,101.57) and (439.35,106.46) .. (439.35,112.5) -- (439.35,145.28) .. controls (439.35,151.32) and (434.46,156.21) .. (428.42,156.21) -- (317.95,156.21) .. controls (311.91,156.21) and (307.02,151.32) .. (307.02,145.28) -- cycle ;
%Rounded Rect [id:dp49330610581151313] 
\draw  [color={rgb, 255:red, 255; green, 255; blue, 255 }  ,draw opacity=1 ][fill={rgb, 255:red, 255; green, 192; blue, 203 }  ,fill opacity=1 ] (197.46,16.66) .. controls (197.46,10.48) and (202.47,5.46) .. (208.66,5.46) -- (248.18,5.46) .. controls (254.36,5.46) and (259.37,10.48) .. (259.37,16.66) -- (259.37,50.23) .. controls (259.37,56.41) and (254.36,61.42) .. (248.18,61.42) -- (208.66,61.42) .. controls (202.47,61.42) and (197.46,56.41) .. (197.46,50.23) -- cycle ;
%Rounded Rect [id:dp03504872059171682] 
\draw  [color={rgb, 255:red, 255; green, 255; blue, 255 }  ,draw opacity=1 ][fill={rgb, 255:red, 255; green, 192; blue, 203 }  ,fill opacity=1 ] (27.45,112.46) .. controls (27.45,106.42) and (32.34,101.53) .. (38.38,101.53) -- (146.1,101.53) .. controls (152.13,101.53) and (157.03,106.42) .. (157.03,112.46) -- (157.03,145.24) .. controls (157.03,151.28) and (152.13,156.17) .. (146.1,156.17) -- (38.38,156.17) .. controls (32.34,156.17) and (27.45,151.28) .. (27.45,145.24) -- cycle ;
%Straight Lines [id:da6376299448583339] 
\draw    (73.95,116.55) -- (61.06,126.41) -- (49.64,135.16) ;
\draw [shift={(47.26,136.98)}, rotate = 322.56] [fill={rgb, 255:red, 0; green, 0; blue, 0 }  ][line width=0.08]  [draw opacity=0] (10.72,-5.15) -- (0,0) -- (10.72,5.15) -- (7.12,0) -- cycle    ;
%Straight Lines [id:da9655825520807056] 
\draw    (88.73,116.72) -- (106.73,135.68) ;
\draw [shift={(108.79,137.85)}, rotate = 226.49] [fill={rgb, 255:red, 0; green, 0; blue, 0 }  ][line width=0.08]  [draw opacity=0] (10.72,-5.15) -- (0,0) -- (10.72,5.15) -- (7.12,0) -- cycle    ;
%Straight Lines [id:da595887692491491] 
\draw    (142.11,117.42) -- (122.83,136.44) ;
\draw [shift={(120.69,138.55)}, rotate = 315.39] [fill={rgb, 255:red, 0; green, 0; blue, 0 }  ][line width=0.08]  [draw opacity=0] (10.72,-5.15) -- (0,0) -- (10.72,5.15) -- (7.12,0) -- cycle    ;
%Straight Lines [id:da8282179015755521] 
\draw    (222.36,24.86) -- (238.2,40.93) ;
\draw [shift={(240.3,43.07)}, rotate = 225.42] [fill={rgb, 255:red, 0; green, 0; blue, 0 }  ][line width=0.08]  [draw opacity=0] (10.72,-5.15) -- (0,0) -- (10.72,5.15) -- (7.12,0) -- cycle    ;
%Rounded Rect [id:dp970602085460684] 
\draw  [color={rgb, 255:red, 255; green, 255; blue, 255 }  ,draw opacity=1 ][fill={rgb, 255:red, 255; green, 192; blue, 203 }  ,fill opacity=1 ] (484.58,16.31) .. controls (484.58,10.3) and (489.45,5.42) .. (495.46,5.42) -- (535.6,5.42) .. controls (541.62,5.42) and (546.49,10.3) .. (546.49,16.31) -- (546.49,48.96) .. controls (546.49,54.97) and (541.62,59.84) .. (535.6,59.84) -- (495.46,59.84) .. controls (489.45,59.84) and (484.58,54.97) .. (484.58,48.96) -- cycle ;
%Straight Lines [id:da01807202060692248] 
\draw    (501.41,22.21) -- (524.81,41.3) ;
\draw [shift={(527.14,43.2)}, rotate = 219.2] [fill={rgb, 255:red, 0; green, 0; blue, 0 }  ][line width=0.08]  [draw opacity=0] (10.72,-5.15) -- (0,0) -- (10.72,5.15) -- (7.12,0) -- cycle    ;
%Straight Lines [id:da7056933848921817] 
\draw    (477.72,64.97) -- (462.53,80.61) -- (449.61,93.22) ;
\draw [shift={(448.18,94.62)}, rotate = 315.7] [color={rgb, 255:red, 0; green, 0; blue, 0 }  ][line width=0.75]    (10.93,-4.9) .. controls (6.95,-2.3) and (3.31,-0.67) .. (0,0) .. controls (3.31,0.67) and (6.95,2.3) .. (10.93,4.9)   ;
%Straight Lines [id:da778897724532588] 
\draw    (353.17,116.87) -- (341.55,125.93) -- (333.31,132.36) ;
\draw [shift={(330.94,134.2)}, rotate = 322.05] [fill={rgb, 255:red, 0; green, 0; blue, 0 }  ][line width=0.08]  [draw opacity=0] (10.72,-5.15) -- (0,0) -- (10.72,5.15) -- (7.12,0) -- cycle    ;
%Straight Lines [id:da8208788756770781] 
\draw    (640.04,120.07) -- (615.18,138.7) ;
\draw [shift={(612.78,140.5)}, rotate = 323.14] [fill={rgb, 255:red, 0; green, 0; blue, 0 }  ][line width=0.08]  [draw opacity=0] (10.72,-5.15) -- (0,0) -- (10.72,5.15) -- (7.12,0) -- cycle    ;
%Straight Lines [id:da5099089036962001] 
\draw    (655.14,120.25) -- (673.54,139.22) ;
\draw [shift={(675.62,141.38)}, rotate = 225.89] [fill={rgb, 255:red, 0; green, 0; blue, 0 }  ][line width=0.08]  [draw opacity=0] (10.72,-5.15) -- (0,0) -- (10.72,5.15) -- (7.12,0) -- cycle    ;
%Straight Lines [id:da04794267732743929] 
\draw    (187.53,64.72) -- (160.19,92.54) ;
\draw [shift={(158.78,93.97)}, rotate = 314.51] [color={rgb, 255:red, 0; green, 0; blue, 0 }  ][line width=0.75]    (10.93,-4.9) .. controls (6.95,-2.3) and (3.31,-0.67) .. (0,0) .. controls (3.31,0.67) and (6.95,2.3) .. (10.93,4.9)   ;
%Straight Lines [id:da9256611021932947] 
\draw    (267.78,64.47) -- (295.89,93.53) ;
\draw [shift={(297.28,94.97)}, rotate = 225.95] [color={rgb, 255:red, 0; green, 0; blue, 0 }  ][line width=0.75]    (10.93,-4.9) .. controls (6.95,-2.3) and (3.31,-0.67) .. (0,0) .. controls (3.31,0.67) and (6.95,2.3) .. (10.93,4.9)   ;
%Straight Lines [id:da33396431136558724] 
\draw    (557.97,65.22) -- (586.31,93.56) ;
\draw [shift={(587.72,94.97)}, rotate = 225] [color={rgb, 255:red, 0; green, 0; blue, 0 }  ][line width=0.75]    (10.93,-4.9) .. controls (6.95,-2.3) and (3.31,-0.67) .. (0,0) .. controls (3.31,0.67) and (6.95,2.3) .. (10.93,4.9)   ;
%Curve Lines [id:da7164314497047067] 
\draw    (361.32,119.01) .. controls (365.9,136.59) and (383.96,128.46) .. (367.28,114.89) ;
\draw [shift={(365.01,113.17)}, rotate = 35.44] [fill={rgb, 255:red, 0; green, 0; blue, 0 }  ][line width=0.08]  [draw opacity=0] (10.72,-5.15) -- (0,0) -- (10.72,5.15) -- (7.12,0) -- cycle    ;
%Curve Lines [id:da3765800091890339] 
\draw    (493.68,25.97) .. controls (471.49,40.85) and (514.55,60.48) .. (499.88,27.16) ;
\draw [shift={(498.63,24.49)}, rotate = 63.85] [fill={rgb, 255:red, 0; green, 0; blue, 0 }  ][line width=0.08]  [draw opacity=0] (10.72,-5.15) -- (0,0) -- (10.72,5.15) -- (7.12,0) -- cycle    ;
%Curve Lines [id:da6289701202158322] 
\draw    (644.68,122.3) .. controls (622.49,137.19) and (665.55,156.82) .. (650.88,123.5) ;
\draw [shift={(649.63,120.83)}, rotate = 63.85] [fill={rgb, 255:red, 0; green, 0; blue, 0 }  ][line width=0.08]  [draw opacity=0] (10.72,-5.15) -- (0,0) -- (10.72,5.15) -- (7.12,0) -- cycle    ;

% Text Node
\draw (1.67,115.51) node [anchor=north west][inner sep=0.75pt]    {$F_{1}^{1}$};
% Text Node
\draw (277.62,116.7) node [anchor=north west][inner sep=0.75pt]    {$F_{2}^{2}$};
% Text Node
\draw (242.66,39.39) node [anchor=north west][inner sep=0.75pt]   [align=left] {$\displaystyle a_{1}$};
% Text Node
\draw (207.32,5.76) node [anchor=north west][inner sep=0.75pt]   [align=left] {$\displaystyle a_{2}$};
% Text Node
\draw (243.14,8.18) node [anchor=north west][inner sep=0.75pt]   [align=left] {$\displaystyle c$};
% Text Node
\draw (172.02,22.51) node [anchor=north west][inner sep=0.75pt]    {$F_{1}^{2}$};
% Text Node
\draw (526.27,40.17) node [anchor=north west][inner sep=0.75pt]   [align=left] {$\displaystyle b'$};
% Text Node
\draw (490.94,6.55) node [anchor=north west][inner sep=0.75pt]   [align=left] {$\displaystyle a^{*}$};
% Text Node
\draw (454.26,21.73) node [anchor=north west][inner sep=0.75pt]    {$F_{2}^{3}$};
% Text Node
\draw (561.82,123.8) node [anchor=north west][inner sep=0.75pt]    {$F_{3}^{3}$};
% Text Node
\draw (314.46,132.6) node [anchor=north west][inner sep=0.75pt]   [align=left] {$\displaystyle b'$};
% Text Node
\draw (354.8,99.43) node [anchor=north west][inner sep=0.75pt]   [align=left] {$\displaystyle a^{*}$};
% Text Node
\draw (392.31,134.46) node [anchor=north west][inner sep=0.75pt]   [align=left] {$ $};
% Text Node
\draw (398.1,102.15) node [anchor=north west][inner sep=0.75pt]   [align=left] {$\displaystyle c$};
% Text Node
\draw (597.99,134.27) node [anchor=north west][inner sep=0.75pt]   [align=left] {$\displaystyle b'$};
% Text Node
\draw (642.33,102.92) node [anchor=north west][inner sep=0.75pt]   [align=left] {$\displaystyle a^{*}$};
% Text Node
\draw (676.84,134.61) node [anchor=north west][inner sep=0.75pt]   [align=left] {$\displaystyle c'$};
% Text Node
\draw (472.28,82.03) node [anchor=north west][inner sep=0.75pt]   [align=left] {$\displaystyle f_{2,3}^{2}$};
% Text Node
\draw (182.78,82.53) node [anchor=north west][inner sep=0.75pt]   [align=left] {$\displaystyle f_{1,2}^{1}$};
% Text Node
\draw (251.28,81.53) node [anchor=north west][inner sep=0.75pt]   [align=left] {$\displaystyle f_{1,2}^{2}$};
% Text Node
\draw (540.78,82.53) node [anchor=north west][inner sep=0.75pt]   [align=left] {$\displaystyle f_{2,3}^{3}$};
% Text Node
\draw (34.56,132.56) node [anchor=north west][inner sep=0.75pt]   [align=left] {$\displaystyle a_{1}$};
% Text Node
\draw (73.57,99.39) node [anchor=north west][inner sep=0.75pt]   [align=left] {$\displaystyle a_{2}$};
% Text Node
\draw (110.85,134.42) node [anchor=north west][inner sep=0.75pt]   [align=left] {$\displaystyle c$};
% Text Node
\draw (142.47,102.12) node [anchor=north west][inner sep=0.75pt]   [align=left] {$\displaystyle b$};
\end{tikzpicture}
}
\end{equation}     
%\end{equation}

More generally, within a system, numerous observations can be made. Each observation is intended to capture a different facet of the problem. This diversity translates into the necessity of employing various data structures, such as sets, graphs, groups, among others, to represent relevant mathematical spaces underlying the data. Our goal in this work is to use a language that enables us to formally handle data whose snapshots are modeled via commonly used data structures in data analysis.  As we will explain in Section~\ref{sec:background}, the language we are looking for is that of \textit{sheaves}, and the structure hidden in Diagrams~\ref{diagram:companies-SHEAF-example} and \ref{diagram:companies-graph} is that of a \textit{sheaf on a category of intervals}. Sheaves are most naturally described in category-theoretic terms and, as is always the case in category theory, they admit a categorically dual notion, namely \textit{cosheaves}. As it turns out, while sheaves capture the notion of \textit{persistent objects}, cosheaves on interval categories instead capture the idea of an underlying static object that is \textit{accumulated over time}. Thus we see (this will be explained formally in Section~\ref{sec:cumulative-vs-persistent}) that the two perspectives—\textit{persistent} vs. \textit{cumulative}—of our second desideratum are not only convenient and intuitively natural, they are also dual to each other in a formal sense. 

\subsection{Narratives}\label{sec:background}
From this section onward we will assume basic familiarity with categories, functors and natural transformations. For a very short, self-contained introduction to the necessary background suitable for graph theorists, we refer the reader to the thesis by Bumpus~\cite[Sec. 3.2]{bumpus2021generalizing}. For a thorough introduction to the necessary category-theoretic background, we refer the reader to any monograph on category theory (such as Riehl's textbook~\cite{riehl2017category} or Awodey's~\cite{awodey2010category}). We will give concrete definitions of the specific kinds of sheaves and co-sheaves that feature in this paper; however, we shall not recall standard notions in sheaf theory. For an approachable introduction to any notion from sheaf theory not explicitly defined here, we refer the reader to Rosiak's excellent textbook~\cite{rosiak-book}.

For most, the first sheaves one encounters are sheaves on a topological space. These are assignments of data to each open of a given topological space in such a way that these data can be restricted along inclusions of opens and such that the data assigned to any open \(U\) of the space is completely determined from the data assigned to the opens of any \textit{cover} of \(U\). In gradually more concrete terms, a \textit{\(\cat{Set}\)-valued sheaf} \(\Fa\) on a topological space \(\Xa\) is a contravariant functor (a \textit{presheaf}) \(\Fa \colon \Oa(\Xa)^{op} \to \cat{Set}\) from the poset of opens in \(\Xa\) to sets which satisfies certain lifting properties relating the values of \(\Fa\) on any open \(U\) to the values of \((\Fa(U_i))_{i \in I}\) for any open cover \((U_i)_{i \in I}\) of \(U\). Here we are interested in sheaves that are: (1) defined on posets (categories) of closed intervals of the non-negative reals (or integers) and (2) not necessarily \(\cat{Set}\)-valued. The first requirement has to do with representing time. Each point in time \(t\) is represented by a singleton interval \([t,t]\) and each proper interval \([t_1,t_2]\) accounts for the time spanned between its endpoints. The second requirement has to do with the fact that we are not merely interested in temporal sets, but instead we wish to build a more general theory capable or representing with a single formalism many kinds of temporal data such as temporal graphs, temporal topological spaces, temporal databases, temporal groups etc.. 

Thus one can see that, in order to specify a sheaf, one requires: (1) a presheaf \(\Fa \colon \cat{C}^{op} \to \cat{D}\) from a category \(\cat{C}\) to a category \(\cat{D}\), (2) a notion of what should count of as a `cover' of any object of \(\cat{C}\) and (3) a formalization of how \(\Fa\) should relate objects to their covers. To address the first point we will first give a reminder of the more general notation and terminology surrounding presheaves. 

\begin{definition}\label{def:presheaf}
For any small category \(\cat{C}\) (such as \(\cat{I}\) or \(\cat{I}_\mathbb{N}\)) we denote by \(\cat{D}^{\cat{C}}\) the category of \define{\(\cat{D}\)-valued co-presheaves on \(\cat{C}\)}; this has functors \(P \colon \cat{C} \to \cat{D}\) as objects and natural transformations as morphisms. When we wish to emphasize contravariance, we call \(\cat{D}^{\cat{C}^{op}}\) the \define{category of \(\cat{D}\)-valued presheaves on \(\cat{C}\)}.
\end{definition}

The second point -- on choosing good notions of `covers' -- is smoothly handled via the notion of a \define{Grothendieck topology} (see Rosiak's textbook~\cite{rosiak-book} for a formal definition). Categories equipped with a choice of a Grothendieck topology are known as \define{sites} and the following definition (due to Schultz, Spivak and Vasilakopoulou~\cite{schultz2017temporal}) amounts to a way of turning categories of intervals into sites by specifying what counts as a valid cover of any interval.   

\begin{definition}[Interval categories~\cite{schultz2020dynamical}]\label{def:interval}
The \define{category of intervals}, denoted \(\cat{Int}\) is the category having closed intervals \([\ell, \ell']\) in \(\mathbb{R}+\) (the non-negative reals) as objects and orientation-preserving isometries as morphisms. Analogously, one can define the category \(\cat{Int}_\mathbb{N}\) of \define{discrete intervals} by restricting only to \(\mathbb{N}\)-valued intervals. These categories can be turned into sites by equipping them with the Johnstone coverage~\cite{schultz2020dynamical} which stipulates that any \define{cover} of any interval \([\ell, \ell']\) is generated by a partition into two closed intervals \(([\ell, p], [p, \ell'])\).\footnote{Thus note that, for any interval $[a,b]$, a valid cover can be specified by simply choosing any number of distinct points $p_1, \dots, p_n$ such that $a \leq p_1 \leq \dots \leq p_n \leq b$; this yields the cover $\{[a,p_1], [p_1, p_2], \dots [p_{n-1}, p_n], [p_n, b]\}$.}
\end{definition}

Schultz, Spivak and Vasilakopoulou defined interval sites to speak of \textit{dynamical systems} as sheaves~\cite{schultz2020dynamical}. Here we are instead interested in temporal data. As most would expect, data should in general be less temporally interwoven compared to its dynamical system of provenance (after all the temporal data should carry less information than a dynamical system). This intuition\footnote{By comparing examples of interval sheaves with sheaves on categories of strict intervals, the reader can verify that there is a sense in which these intuitions can be made mathematically concrete (in order to not derail the presentation of this paper, we omit these examples).} motivates why we will not work directly with Schultz, Spivak and Vasilakopoulou's definition, but instead we will make use of the following stricter notion of \textit{categories of strict intervals}.\footnote{Note that there is a sense in which a functor defined on a subcategory of some category \(\cat{C}\) has greater freedom compared to a functor defined on all of \(\cat{C}\). This is because there are fewer arrows (and hence fewer \textit{equations}) which need to be accounted for in the subcategory.}  

\begin{definition}[Closed Intervals and Inclusions]\label{def:strict-interval}
We denote by \(\cat{I}\) (resp. \(\cat{I}_\mathbb{N}\)) the full subcategory (specifically a join-semilattice) of the subobject poset of $\mathbb{R}$ (resp. $\mathbb{N}$) whose objects are closed intervals and whose morphisms are inclusions of intervals.  
\end{definition}

Clearly, the categories defined above are subcategories of $\cat{Int}$ (resp. $\cat{Int}_{\mathbb{N}}$) since their morphisms are orientation-preserving isometries. Notice that the categories \(\cat{I}\) (resp. \(\cat{I}_\mathbb{N}\)) are posetal (in contrast, \(\cat{Int}\) is \textit{not} posetal).  Hence, the poset of subobjects of any interval \([a,b]\) forms a subcategory of \(\cat{I}\) (resp \(\cat{I}_\mathbb{N}\)), which we denote by \(\cat{I}/[a,b]\) (resp. \(\cat{I}_{\mathbb{N}}/[a,b]\)). In what follows, since we will want to speak of discrete, continuous, finite and infinite time, it will be convenient to have terminology to account for which categories we will allow as models of time. We will call such categories \textit{time categories}. 

\begin{notation}
A \define{time category} is any sub-join-semilattice \(\cat{T}\) of either \(\cat{I}\) or \(\cat{I}_\mathbb{N}\). The left hand side of Figure~\ref{fig:schematic-drawing-of-a-narrative} visualizes the time category \(\cat{I}_{\mathbb{N}}/[1,3]\).
\end{notation}

The following lemma states that time categories can be given Grothendieck topologies in much the same way as the interval categories of Definition~\ref{def:interval}. Since the proof is completely routine, but far too technical for newcomers to sheaf theory, we will omit it assuming that the readers well-versed in sheaf theory can reproduce it on their own.

\begin{lemma}\label{lemma:time-category-sites}
Any time category forms a site when equipped with the Johnstone coverage. 
\end{lemma}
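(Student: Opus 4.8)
The plan is to present $\cat{T}$ with the Johnstone coverage and verify the axioms of a site directly. Recall from Definition~\ref{def:interval} and its footnote that a covering family of an object $[a,b]\in\cat{T}$ is generated by a finite partition specified by cut points $a\le p_1\le\cdots\le p_n\le b$, namely $\{[a,p_1],[p_1,p_2],\dots,[p_n,b]\}$, with the proviso that all of these sub-intervals are themselves objects of $\cat{T}$; passing to down-closures turns such families into covering sieves. Since a coverage generates a unique Grothendieck topology with the same category of sheaves, it suffices to check that this assignment satisfies the coverage axiom (equivalently, one may verify the three sieve-theoretic axioms: maximality, stability, and transitivity). I would organise the argument around these, treating the discrete case $\cat{I}_\mathbb{N}$ and the continuous case $\cat{I}$ uniformly, the only difference being the set from which cut points are drawn.

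I would dispatch the two easy axioms first. Maximality holds because the empty set of cut points yields the trivial partition $\{[a,b]\}$, whose down-closure is the maximal sieve on $[a,b]$. Transitivity is essentially the observation, already implicit in the footnote, that partitions compose: if $\{[p_i,p_{i+1}]\}_i$ is a covering partition of $[a,b]$ and each piece $[p_i,p_{i+1}]$ is further subdivided by additional cut points $q^{(i)}_1\le\cdots\le q^{(i)}_{m_i}$, then the union of all the $p_i$ and $q^{(i)}_j$ induces a partition refining the original; every piece of this finer partition already occurs in one of the sub-covers, hence lies in $\cat{T}$, so the finer family is again a legitimate covering family. Read off at the level of sieves, this is exactly the transitivity (local-character) axiom.

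The hard part will be the stability axiom: given a covering partition of $[a,b]$ at cut points $p_1<\cdots<p_n$ and an arbitrary inclusion $[c,d]\hookrightarrow[a,b]$ (so $a\le c\le d\le b$), I must produce a covering family of $[c,d]$ whose composites into $[a,b]$ factor through the original pieces. The natural candidate is to keep only the cut points lying in the open interval $(c,d)$, say $p_j,\dots,p_k$, and take the induced partition $\{[c,p_j],[p_j,p_{j+1}],\dots,[p_k,d]\}$ of $[c,d]$; by construction each piece is contained in a single piece $[p_i,p_{i+1}]$ of the cover of $[a,b]$, so the required factorisations hold and the pullback sieve is covering. The one genuine technical point — and the step on which the whole argument turns — is to confirm that the boundary pieces $[c,p_j]$ and $[p_k,d]$ are themselves objects of $\cat{T}$, so that the refinement is a bona fide covering family in $\cat{T}$ rather than merely in the ambient $\cat{I}$. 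This is precisely where the semilattice hypothesis on $\cat{T}$ must be invoked: the relevant cut points and the endpoints $c,d$ all arise as endpoints of intervals already present in $\cat{T}$, and one exhibits the boundary pieces inside $\cat{T}$ by combining these intervals using the closure of $\cat{T}$ under the lattice operations of the interval poset. Once stability is secured in this way, maximality and transitivity complete the verification that $(\cat{T},\text{Johnstone coverage})$ is a site, and I expect the remaining work to be purely combinatorial bookkeeping with cut points.
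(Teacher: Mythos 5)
Since the paper deliberately omits its own proof of this lemma, your attempt can only be judged on its mathematics, and there it contains a genuine gap at precisely the step you flag as ``the step on which the whole argument turns.'' The boundary pieces $[c,p_j]$ and $[p_k,d]$ in your stability argument are \emph{meets}, namely $[c,p_j] = [c,d] \cap [a,p_j]$, whereas a time category is only assumed to be a sub-\emph{join}-semilattice of $\cat{I}$ or $\cat{I}_\mathbb{N}$. Joins of intervals are convex hulls, so combining members of $\cat{T}$ under the available lattice operation only ever produces \emph{larger} intervals; it can never manufacture a truncation such as $[c,p_j]$. Your appeal to ``closure of $\cat{T}$ under the lattice operations of the interval poset'' therefore invokes exactly the closure property (meets, or closure under subintervals) that the hypothesis does not grant. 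Concretely, take $\cat{T} = \{[0,1],\,[1,2],\,[0,1.5],\,[0.5,1.5],\,[0.5,2],\,[0,2]\}$: one checks this is closed under joins, and the partition of $[0,2]$ at the cut point $1$ has both pieces in $\cat{T}$, yet pulling back along $[0.5,1.5] \hookrightarrow [0,2]$ your recipe requires $[0.5,1]$ and $[1,1.5]$, neither of which lies in $\cat{T}$; indeed no member of $\cat{T}$ is contained in either half, so the pullback sieve is empty and \emph{no} covering family of the kind you allow refines the cover. The failure is moreover intrinsic to your formulation, not to your particular candidate family: if any $\cat{T}$-valued partition of $[c,d]$ factored through the two halves, the join of its pieces lying left of the cut would already exhibit $[c,p] \in \cat{T}$. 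So with covering families required to consist of objects of $\cat{T}$, the stability axiom genuinely fails for arbitrary time categories, and the bookkeeping you defer cannot be completed.

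There are two ways to repair this. First, one can define the coverage at the level of sieves rather than object-level partitions: for $[a,b] \in \cat{T}$ and a cut point $p \in [a,b]$, declare covering the sieve of \emph{all} intervals of $\cat{T}$ contained in $[a,p]$ or in $[p,b]$, regardless of whether the two halves themselves belong to $\cat{T}$. Stability then becomes immediate, because the pullback of such a sieve along $[c,d] \hookrightarrow [a,b]$ is the sieve of the same form on $[c,d]$ for the same cut point (or the maximal sieve when $p \notin [c,d]$), and your maximality and transitivity arguments go through unchanged. Second, your argument as written \emph{is} correct for the time categories the paper actually uses --- $\cat{I}$, $\cat{I}_\mathbb{N}$ and the subobject posets $\cat{I}/[a,b]$, $\cat{I}_\mathbb{N}/[a,b]$ --- since these are closed under passing to subintervals, so $[c,p_j]$ and $[p_k,d]$ are automatically present; but that down-closure is an additional hypothesis on $\cat{T}$, not a consequence of the join-semilattice condition you invoke. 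A minor further remark: in Johnstone's minimal sense a site requires only the single coverage axiom (the stability-type condition), so your verification of maximality and transitivity, while harmless, is more than is strictly needed --- the entire burden of the lemma rests on the step that fails.
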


Equipped with suitable sites, we are now ready to give the definition of the categories \(\cat{Cu}(\cat{T}, \cat{D})\) and \(\cat{Pe}(\cat{T}, \cat{D})\) where \(\cat{T}\) is any time category. We will refer to either one of these as categories of \define{\(\cat{D}\)-narratives in \(\cat{T}\)-time}: intuitively these are categories whose objects are time-varying objects of \(\cat{D}\). For instance, taking \(\cat{D}\) to be \(\cat{Set}\) or \(\cat{Grph}\) one can speak of time varying sets or time-varying graphs. The difference between \(\cat{Pe}(\cat{T}, \cat{D})\) and \(\cat{Cu}(\cat{T}, \cat{D})\) will be that the first encodes \(\cat{D}\)-narratives according to the \textit{persistent} perspective (these will be \(\cat{D}\)-valued \textit{sheaves} on \(\cat{T}\)), while the second employs a \textit{cumulative} one (these will be \(\cat{D}\)-valued \textit{co-}sheaves on \(\cat{T}\)). 

\begin{definition}\label{def:discrete-and-lifetime}
We will say that the narratives are \define{discrete} if the time category involved is either \(\cat{I}_\mathbb{N}\) or any sub-join-semilattices thereof. Similarly we will say that a category of narratives has \define{finite lifetime} if its time category has finitely many objects or if it is a subobject poset generated by some element of \(\cat{I}\) or \(\cat{I}_\mathbb{N}\).    
\end{definition}

Now we are ready to give the definition of a sheaf with respect to any of the sites described in Lemma~\ref{lemma:time-category-sites}. The reader not interested in sheaf theory should take the following proposition (whose proof is a mere instantiation of the standard definition of a sheaf on a site) as a \textit{definition} of a sheaf on a time category.  

\begin{figure}
    \centering
    \includegraphics[width=\textwidth]{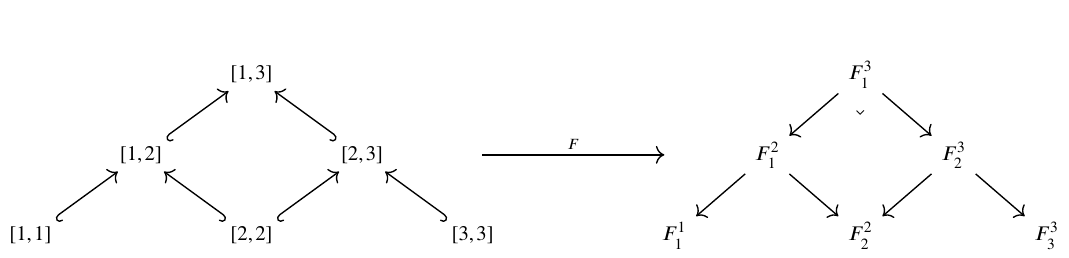}
    \caption{A schematic visualization of a sheaf on a discrete time category (a persistent narrative) with three snapshots. The domain is the time category \(\cat{I}_{\mathbb{N}}/[1,3]\), a join-semi-lattice whose objects are closed subintervals of \([1,3]\) and whose morphisms are interval inclusions. The codomain of the sheaf is a category \(\cat{D}\) with pullbacks. We use the shorthand $F_i^j$ (for $i \leq j$) to denote the data assigned the interval $[i,j]$ by $F$ (i.e. $F_i^j := F([i,j])$).}
    \label{fig:schematic-drawing-of-a-narrative}
\end{figure}

\begin{proposition}[\(\cat{T}\)-sheaves and \(\cat{T}\)-cosheaves]\label{prop:def:sheaves}
Let \(\cat{T}\) be any time category equipped with the Johnstone coverage. Suppose \(\cat{D}\) is a category with pullbacks, then a \define{\(\cat{D}\)-valued sheaf on \(\cat{T}\)} is a presheaf \(F \colon \cat{T}^{op} \to \cat{D}\) satisfying the following additional condition: for any interval \([a,b]\) and any cover \(([a, p],[p, b])\) of this interval, \(F([a,b])\) is the pullback \(F([a,p]) \times_{F([p,p])} F([p,b])\). (See Figure~\ref{fig:schematic-drawing-of-a-narrative}.)

Similarly, supposing \(\cat{D}\) to be a category with pushouts, then a \define{\(\cat{D}\)-valued cosheaf on \(\cat{T}\)} is a copresheaf \(\hat{F} \colon \cat{T} \to \cat{D}\) satisfying the following additional condition: for any interval \([a,b]\) and any cover \(([a, p],[p, b])\) of this interval, \(\hat{F}([a,b])\) is the pushout \(\hat{F}([a,p]) +_{\hat{F}([p,p])} \hat{F}([p,b])\).
\end{proposition}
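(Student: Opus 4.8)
The plan is to derive the concrete pullback (resp.\ pushout) condition directly from the general sheaf (resp.\ cosheaf) axiom on a site, exploiting two features of the present situation: the Johnstone coverage is generated by \emph{binary} partitions, and every time category \(\cat{T}\) is posetal. First I would recall the abstract condition: a \(\cat{D}\)-valued presheaf \(F \colon \cat{T}^{op} \to \cat{D}\) is a sheaf when, for every covering family \(\{U_i \hookrightarrow U\}_i\), the object \(F(U)\) is the limit of the associated matching-family (\v{C}ech) diagram, whose two parallel maps \(\prod_i F(U_i) \rightrightarrows \prod_{i,j} F(U_i \times_U U_j)\) are induced by the two families of restrictions. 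Since \(\cat{D}\) need not be \(\cat{Set}\), I would reduce to the set-valued case by representability: \(F\) is a \(\cat{D}\)-valued sheaf iff \(\homset(d, F(-))\) is a \(\cat{Set}\)-valued sheaf for every object \(d\) of \(\cat{D}\). This is legitimate precisely because \(\cat{D}\) has the relevant pullbacks and each \(\homset(d,-)\) preserves and jointly reflects them, so the condition may be checked on generalized elements.

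Next I would invoke the standard principle that a presheaf is a sheaf for the Grothendieck topology generated by a coverage iff it satisfies the sheaf axiom for every covering family in that coverage (the Comparison Lemma / ``basis for a topology'' result, as in Rosiak~\cite{rosiak-book}). Hence it suffices to verify the condition for the generating binary covers \(([a,p],[p,b])\) of \([a,b]\), and I never need to confront the fiber products of non-adjacent pieces of a finer partition---which, being empty, need not even exist as objects of \(\cat{T}\). I expect this to be the one genuinely delicate point, and it is exactly what motivates phrasing the Johnstone coverage through two-piece partitions: the multi-piece covers of the footnote live only in the \emph{generated} topology, where their sheaf condition follows by iterating the binary one through the pasting lemma for pullbacks.

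I would then carry out the computation for a single binary cover. Because \(\cat{T}\) is posetal with inclusions as morphisms, the fiber product \(U_i \times_U U_j\) is simply the intersection \(U_i \cap U_j\); for \(([a,p],[p,b])\) the off-diagonal overlap is \([a,p] \cap [p,b] = [p,p]\) and the diagonal overlaps are the pieces themselves. Substituting into the matching-family diagram collapses it to the single requirement that a pair \((s_1,s_2) \in F([a,p]) \times F([p,b])\) restrict to the same element of \(F([p,p])\); the limit of this diagram is by definition the pullback \(F([a,p]) \times_{F([p,p])} F([p,b])\). Thus the sheaf axiom for this cover is precisely the assertion that the canonical comparison map \(F([a,b]) \to F([a,p]) \times_{F([p,p])} F([p,b])\) is an isomorphism, which is the stated condition.

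Finally, the cosheaf statement is the formal dual: a \(\cat{D}\)-valued cosheaf on \(\cat{T}\) is exactly a \(\cat{D}^{op}\)-valued sheaf on \(\cat{T}^{op}\), so running the identical argument inside \(\cat{D}^{op}\) converts every limit into a colimit and every pullback into a pushout, yielding that \(\hat{F}([a,b])\) is the pushout \(\hat{F}([a,p]) +_{\hat{F}([p,p])} \hat{F}([p,b])\). No new calculation is needed for this half; the only real work in the whole proposition is the bookkeeping that confines attention to binary covers, guaranteeing that all the fiber products (resp.\ pushout corners) appearing in the descent diagrams genuinely exist in \(\cat{T}\).
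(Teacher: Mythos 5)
Your proposal is correct and takes essentially the same route as the paper, whose entire proof is the one-line observation that a sheaf (resp.\ cosheaf) for the Johnstone coverage is, by definition, a presheaf taking each binary cover \(([a,p],[p,b])\) to a limit (resp.\ colimit)---which in the posetal category \(\cat{T}\) is precisely the stated pullback (resp.\ pushout). Your additional steps (representability to reduce \(\cat{D}\)-valued to \(\cat{Set}\)-valued sheaves, the coverage-versus-generated-topology comparison, and the collapse of the \v{C}ech diagram because the only nontrivial overlap is \([p,p]\)) are exactly the routine verifications the paper declares standard and omits; the one cosmetic slip is in your dualization, since a \(\cat{D}\)-valued cosheaf on \(\cat{T}\) is a \(\cat{D}^{op}\)-valued sheaf on the \emph{same} site \(\cat{T}\), not on \(\cat{T}^{op}\).
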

\begin{proof}
By definition, a sheaf (resp. cosheaf) on the Johnstone coverage is simply a presheaf which takes each cover (a partion of an interval) to a limit (resp. colimit). 
\end{proof}

\begin{definition}\label{def:narratives}
We denote by \(\cat{Pe}(\cat{T}, \cat{D})\) (resp. \(\cat{Cu}(\cat{T}, \cat{D})\)) the category of \(\cat{D}\)-valued sheaves (resp. cosheaves) on \(\cat{T}\) and we call it the category of \define{persistent \(\cat{D}\)-narratives} (resp. \define{cumulative \(\cat{D}\)-narratives}) with \(\cat{T}\)-time.
\end{definition}

By this point, the reader has already encountered an example of a \textit{persistent discrete \(\cat{Set}\)-narrative}: Diagram~\ref{diagram:companies-SHEAF-example}, which illustrates the evolution of the temporal set over only three time steps. In contrast, Diagram~\ref{diagram:not-a-persistent-narrative} is \textit{not} a persistent \(\cat{Set}\)-narrative since \(F_1^3 \neq F_1^2 \times_{F_2^2} F_2^3\). To see this, observe that \(F_1^2 \times_{F_2^2} F_2^3\) is the pullback of two subsets (indicated by the hooked arrows denoting injective maps), each of size two. Thus, \(F_1^2 \times_{F_2^2} F_2^3\) has at most four elements, whereas \(F_1^3\) has five.

\begin{equation}\label{diagram:not-a-persistent-narrative}
% https://q.uiver.app/#q=WzAsNixbMCwyLCJGXzFeMTo9XFx7YV8xLCBhXzIsIGIsIGNcXH0iXSxbNCwyLCJGXzNeMzo9XFx7YSxiJywgYydcXH0iXSxbMSwxLCJGXzFeMiA9IFxce2FfMSwgYV8yLGNcXH0iXSxbMiwyLCJGXzJeMjo9XFx7YV9cXHN0YXIsYicsY1xcfSJdLFszLDEsIkZfMl4zID0gXFx7YV9cXHN0YXIsIGInXFx9Il0sWzIsMCwiRl8xXjM9XFx7KGFfMSwgYV9cXHN0YXIpLCAoYV8yLCBhX1xcc3RhcilcXH0iXSxbMiwwLCJmX3sxLDJ9XjEiLDAseyJzdHlsZSI6eyJ0YWlsIjp7Im5hbWUiOiJob29rIiwic2lkZSI6ImJvdHRvbSJ9fX1dLFsyLDMsImZfezEsMn1eMiIsMl0sWzQsMywiZl97MiwzfV4yIiwwLHsic3R5bGUiOnsidGFpbCI6eyJuYW1lIjoiaG9vayIsInNpZGUiOiJib3R0b20ifX19XSxbNCwxLCJmX3syLDN9XjMiLDAseyJzdHlsZSI6eyJ0YWlsIjp7Im5hbWUiOiJob29rIiwic2lkZSI6InRvcCJ9fX1dLFs1LDIsImZfezEsM31eezEsMn0iLDAseyJzdHlsZSI6eyJ0YWlsIjp7Im5hbWUiOiJob29rIiwic2lkZSI6ImJvdHRvbSJ9fX1dLFs1LDQsImZfezEsM31eezIsM30iLDJdLFs1LDMsIiIsMSx7InN0eWxlIjp7Im5hbWUiOiJjb3JuZXIifX1dXQ==
\adjustbox{scale=1.5, max width=.95\textwidth}{%,center}{
\begin{tikzcd}
	&& {F_1^3=\{a,w,x,y,z\}} \\
	& {F_1^2 = \{a,c\}} && {F_2^3 = \{a, b'\}} \\
	{F_1^1:=\{a,b,c\}} && {F_2^2:=\{a,b',c\}} && {F_3^3:=\{a,b', c'\}}
	\arrow[from=1-3, to=2-2]
	\arrow[from=1-3, to=2-4]
	\arrow[hook', from=2-2, to=3-1]
	\arrow[hook, from=2-2, to=3-3]
	\arrow[hook', from=2-4, to=3-3]
	\arrow[hook, from=2-4, to=3-5]
\end{tikzcd}
}
\end{equation}

When writing examples, it is useful to observe that all \textit{discrete} \(\cat{D}\)-narratives (see Definition~\ref{def:discrete-and-lifetime}) are completely determined by the objects and morphisms associated to intervals of length \textit{zero} and \textit{one}. This also implies, for example, that, in order to store a discrete graph narrative with $t$-time steps, it suffices to store $2t - 1$ graphs (one for each interval of length zero and one for each interval of length one) and $2(t-1)$ graph homomorphisms.

\begin{proposition}\label{prop:specifying-a-discrete-narrative}
Suppose we are given a objects \(F([t,t])\) and \(F([t, t+1])\) of \(\cat{D}\) for each time point \([t,t]\) and for each length-one interval \([t, t+1]\) and that we are furthermore given a span \(F([t, t]) \leftarrow F([t, t+1]) \rightarrow  F([t+1, t+1])\) for each pair of successive times $t$ and $t+1$. Then there is (up to isomorphism) a \textit{unique} discrete \(\cat{D}\)-narrative which agrees with these choices of objects and spans. Conversely, a mere sequence of objects of $\cat{D}$ (i.e. a choice of one object for each interval of length zero) does not determine a unique discrete $\cat{D}$-narrative. 
\end{proposition}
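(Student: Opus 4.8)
The plan is to convert the sheaf condition of Proposition~\ref{prop:def:sheaves} into an explicit recursion on interval length, and then read off both existence and uniqueness from the universal property of pullbacks. First I would record the easy but important observation that, for the adjacent pair $t,t+1$, the prescribed span is \emph{precisely} the pair of restriction maps $F_t^{t+1}\to F_t^t$ and $F_t^{t+1}\to F_{t+1}^{t+1}$ attached to the inclusions $[t,t],[t+1,t+1]\hookrightarrow[t,t+1]$, and that the only Johnstone covers of a length-zero or length-one interval are trivial (one block equals the whole interval). Hence Proposition~\ref{prop:def:sheaves} places \emph{no} constraint on the data assigned to intervals of length $0$ and $1$, so this base data may be prescribed freely — which is exactly the freedom the statement exploits.

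For the extension I would induct on $j-i$. When $j-i\ge 2$, the cover $([i,j-1],[j-1,j])$ forces
\[ F([i,j]) \;=\; F([i,j-1])\times_{F_{j-1}^{j-1}} F_{j-1}^{j}, \]
and since $\cat{D}$ has pullbacks this recursion produces, at each stage, an object together with its two projections. Unwinding it exhibits $F([i,j])$ as the iterated pullback $F_i^{i+1}\times_{F_{i+1}^{i+1}}F_{i+1}^{i+2}\times_{F_{i+2}^{i+2}}\cdots\times_{F_{j-1}^{j-1}}F_{j-1}^{j}$, i.e.\ the limit of the connected ``fence'' diagram whose legs are the prescribed spans; this limit exists precisely because it is a finite composite of pullbacks. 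The restriction maps to subintervals are the evident projections out of this iterated pullback, and functoriality of $F$ is immediate from their universal property.

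It then remains to verify that the presheaf so defined is a genuine sheaf, i.e.\ that it sends \emph{every} cover — not merely the peel-off-one cover used in the recursion — to a pullback; this is the step I expect to be the main obstacle. I would dispatch it with the pasting law for pullbacks: a binary cover $([i,p],[p,j])$ splits the fence into two sub-fences sharing the single vertex $F_p^p$, so the whole iterated pullback is the pullback of the two sub-limits over $F_p^p$. Since every Johnstone cover is generated by binary partitions (as noted after Definition~\ref{def:interval}), iterating the pasting law yields the general case; the same law also certifies that the iterated-pullback construction is independent of the order in which intervals are peeled off, which is the only coherence point worth spelling out. Uniqueness is then a corollary: any narrative $G$ agreeing with the prescribed base data must satisfy the very same recursion by the sheaf condition, so the universal property supplies a canonical isomorphism $F([i,j])\xrightarrow{\ \sim\ }G([i,j])$ at each interval, and since on both sides the restriction maps are the limit projections, these isomorphisms commute with restriction and assemble into an isomorphism of sheaves.

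Finally, for the converse I would exhibit two non-isomorphic discrete $\cat{Set}$-narratives on $\cat{I}_{\mathbb{N}}/[1,2]$ sharing the same length-zero data $F_1^1=F_2^2=\{0,1\}$: for instance one with $F_1^2=\{0,1\}$ and both legs the identity, and one with $F_1^2=\emptyset$ (with the unique legs). These have identical underlying sequences of length-zero objects yet already differ on $[1,2]$, so a bare sequence of objects cannot determine the narrative, completing the second assertion.
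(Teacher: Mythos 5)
Your proof is correct, and it differs from the paper's in instructive ways on both halves. For the main direction, the paper's own proof is a two-line appeal: applying the sheaf condition to the prescribed spans ``leaves no choice'' for the remaining assignments, since these are determined by pullbacks and pullbacks are unique up to isomorphism. Your argument is the same in spirit but fills in what the paper leaves implicit: the recursion on $j-i$ exhibiting $F([i,j])$ as the limit of the fence of spans, the pasting-law check that the resulting presheaf satisfies the sheaf condition for \emph{every} binary cover $([i,p],[p,j])$ rather than only the peel-off-one cover used in the construction, the order-independence of the iterated pullback, and the naturality of the comparison isomorphisms in the uniqueness step. These are precisely the verifications a careful reader must supply to turn the paper's one-liner into a complete existence-and-uniqueness proof, so your write-up is a strict refinement rather than a detour. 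On the converse, the routes genuinely diverge: the paper argues object-agnostically that the only covers of a length-one interval $[t,t+1]$ are the degenerate ones $([t,t],[t,t+1])$ and $([t,t+1],[t+1,t+1])$, for which the induced condition $G([t,t+1]) \cong G([t,t]) \times_{G([t,t])} G([t,t+1])$ holds for \emph{any} choice of $G([t,t+1])$ (one leg is an identity), so the length-one data is simply unconstrained in any $\cat{D}$ --- an observation you also make en route and use to justify prescribing the base data freely. You instead exhibit an explicit witness: two non-isomorphic $\cat{Set}$-narratives over $\cat{I}_{\mathbb{N}}/[1,2]$ with identical snapshots, $F_1^2 = \{0,1\}$ with identity legs versus $F_1^2 = \emptyset$. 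Each buys something: the paper's vacuity argument explains \emph{why} uniqueness fails uniformly in $\cat{D}$, but strictly speaking ``no constraint'' only yields non-uniqueness when $\cat{D}$ admits two non-isomorphic choices (in a degenerate $\cat{D}$, say the terminal category, uniqueness holds trivially), so your concrete counterexample is what actually certifies the stated failure, at least for $\cat{Set}$ and any category containing a comparable configuration.
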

\begin{proof}
To see the first point, simply observe that applying the sheaf condition to this data leaves no choice for the remaining assignments on objects and arrows: these are completely determined by pullback and pullbacks are unique up to isomorphism.

On the other hand, suppose we are only given a list of objects of $\cat{D}$, one for each interval of length zero. Then, having to satisfy the sheaf condition does \textit{not} determine a unique $\cat{D}$-narrative that agrees with the given snapshots. To see this, observe that any length-one interval \([t,t+1]\) has exactly two covers: namely \(([t,t], [t, t+1])\) and \(([t,t + 1], [t + 1, t+1])\). Thus, applying the sheaf condition, we we have that \(G([t, t+1])\) must be the pullback \(G([t,t]) \times_{G([t,t])} G([t,t+1])\). However, this pullback is \textit{always} isomorphic to \(G([t,t+1])\) for any choice of the object \(G([t,t+1])\) since pullbacks preserve isomorphisms (and since the restriction of \(G([t,t])\) to itself is its identity morphism). This concludes the proof since the argument is exactly the same for the cover \(([t,t + 1], [t + 1, t+1])\).  
\end{proof}

%\fred{Let us focus on the category of graphs and explicit the pullback and pushouts of two graphs. Thus, let $G = (V, E(G)), H = (V,E(H)), K = (V,E(K))$ be three undirected graphs with the same sets of vertices. Suppose there are homomorphisms $h: G\to H$ and $k : G\to K$. Since $h$ and $k$ map sets of vertices to sets of vertices and $V = V(G) = V(H) = V(K)$, then by definition $h$ and $k$ are identities. Similarly, since $h$ and $k$ are homomorphisms, if $uv\in E(G)$, then $h(u)h(v) = uv\in E(H)$ and if $xy\in E(G)$, then $k(x)k(y) = xy\in E(K)$. The only graph $G$ that respects this property is the \emph{pullback} $G = (V, E(H)\cap E(K))$. By duality, the pushout of $H$ and $K$ is the graph $L = (V, E(H)\cup E(K))$ with homomorphisms $H\to L$ and $K\to L$. This observation will be important in Section~\ref{sec:examples-of-narratives} when we consider graph narratives.}

For an example of a cumulative narrative,  consider the following diagram (recall that, since they are co-sheaves, cumulative narratives are covariant functors). 
% https://q.uiver.app/#q=WzAsNixbMCwyLCJGXzFeMTo9XFx7YV8xLCBhXzIsYixjXFx9Il0sWzQsMiwiRl8zXjM6PVxce2FfXFxzdGFyLGInLCBjJ1xcfSJdLFsxLDEsIkZfMV4yID0gXFx7YV9cXHN0YXIsYiwgYicsY1xcfSJdLFsyLDIsIkZfMl4yOj1cXHthX1xcc3RhcixiJyxjXFx9Il0sWzMsMSwiRl8yXjMgPSBcXHthX1xcc3RhciwgYicsIGMsIGMnXFx9Il0sWzIsMCwiRl8xXjM9XFx7YV9cXHN0YXIsIGIsIGInLCBjLCBjJ1xcfSJdLFswLDIsIiIsMix7ImNvbG91ciI6WzI0MCw2MCw2MF19XSxbMywyLCIiLDIseyJzdHlsZSI6eyJ0YWlsIjp7Im5hbWUiOiJob29rIiwic2lkZSI6InRvcCJ9fX1dLFszLDQsIiIsMix7InN0eWxlIjp7InRhaWwiOnsibmFtZSI6Imhvb2siLCJzaWRlIjoidG9wIn19fV0sWzEsNCwiIiwyLHsic3R5bGUiOnsidGFpbCI6eyJuYW1lIjoiaG9vayIsInNpZGUiOiJ0b3AifX19XSxbMiw1LCIiLDIseyJzdHlsZSI6eyJ0YWlsIjp7Im5hbWUiOiJob29rIiwic2lkZSI6InRvcCJ9fX1dLFs0LDUsIiIsMix7InN0eWxlIjp7InRhaWwiOnsibmFtZSI6Imhvb2siLCJzaWRlIjoidG9wIn19fV0sWzUsMywiIiwxLHsic3R5bGUiOnsibmFtZSI6ImNvcm5lciJ9fV1d
\[
\adjustbox{scale=1.5, max width=\textwidth}{%,center}{
\begin{tikzcd}
	&& {F_1^3=\{a_\star, b, b', c, c'\}} \\
	& {F_1^2 = \{a_\star,b, b',c\}} && {F_2^3 = \{a_\star, b', c, c'\}} \\
	{F_1^1:=\{a_1, a_2,b,c\}} && {F_2^2:=\{a_\star,b',c\}} && {F_3^3:=\{a_\star,b', c'\}}
	\arrow[color={rgb,255:red,92;green,92;blue,214}, from=3-1, to=2-2]
	\arrow[hook, from=3-3, to=2-2]
	\arrow[hook, from=3-3, to=2-4]
	\arrow[hook, from=3-5, to=2-4]
	\arrow[hook, from=2-2, to=1-3]
	\arrow[hook, from=2-4, to=1-3]
	\arrow["\lrcorner"{anchor=center, pos=0.125, rotate=-45}, draw=none, from=1-3, to=3-3]
\end{tikzcd}
}
\]
We can think of this diagram (where we denoted injections via hooked arrows) as representing a cumulative view of the example from Section~\ref{sec:temporal-sets} of ice cream companies over time. Note that not all arrows are injections (the arrow \(F_1^1 \to F_1^2\) marked in blue is not injective since it takes every company to itself except for \(a_1\) and \(a_2\) which are both mapped to \(a_\star\)). Thus one can think of the cumulative perspective as accumulating not only the data (the companies) seen so far, but also the relationships that are `discovered' thus far in time. 

\subsection{Relating the Cumulative and Persistent Perspectives}\label{sec:cumulative-vs-persistent}

This section marks a significant step toward achieving our Desideratum~\ref{desideratum-2} for a theory for temporal structures. This desideratum emerges from the realization that, as we extend our focus to encompass categories beyond graphs, there exists a potential for information loss during the transition between the cumulative and persistent data structures. The present section systematically characterizes such transitions. Our Theorem \ref{thm:adjunction} yields two key results: the functoriality of transitioning from cumulative to persistent and vice versa, and the establishment of the adjunction \(\mathscr{K} \dashv \mathscr{P}\) formally linking these perspectives.

\begin{theorem}\label{thm:adjunction}
Let \(\cat{D}\) be a category with limits and colimits and $\cat{T}$ a time category. There exist functors \(\mathscr{K} \colon \mathsf{Pe}(\cat{T}, \mathsf{D}) \to \mathsf{Cu}(\cat{T}, \mathsf{D})\) and \(\mathscr{P} \colon \mathsf{Cu}(\cat{T}, \mathsf{D}) \to \mathsf{Pe}(\cat{T}, \mathsf{D})\).  Moreover, these functors are adjoint to each other:
% https://q.uiver.app/#q=WzAsMixbMCwwLCJcXG1hdGhzZntQZX0oXFxjYXR7VH0sIFxcbWF0aHNme0R9KSJdLFsyLDAsIlxcbWF0aHNme0N1fShcXGNhdHtUfSwgXFxtYXRoc2Z7RH0pIl0sWzAsMSwie1xcbWF0aHNjcntLfX0iLDAseyJjdXJ2ZSI6LTJ9XSxbMSwwLCJ7XFxtYXRoc2Nye1B9fSIsMCx7ImN1cnZlIjotMn1dLFsyLDMsIiIsMix7ImxldmVsIjoxLCJzdHlsZSI6eyJuYW1lIjoiYWRqdW5jdGlvbiIsImJvZHkiOnsibmFtZSI6Im5vbmUifSwiaGVhZCI6eyJuYW1lIjoibm9uZSJ9fX1dXQ==
\[\begin{tikzcd}
	{\mathsf{Pe}(\cat{T}, \mathsf{D})} && {\mathsf{Cu}(\cat{T}, \mathsf{D})}
	\arrow[""{name=0, anchor=center, inner sep=0}, "{{\mathscr{K}}}", curve={height=-12pt}, from=1-1, to=1-3]
	\arrow[""{name=1, anchor=center, inner sep=0}, "{{\mathscr{P}}}", curve={height=-12pt}, from=1-3, to=1-1]
	\arrow["\dashv"{anchor=center, rotate=-90}, draw=none, from=0, to=1]
\end{tikzcd}\]
\end{theorem}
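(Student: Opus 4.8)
The plan is to build $\mathscr{K}$ and $\mathscr{P}$ directly out of the (co)limit universal properties that already define sheaves and cosheaves, and then to read off the adjunction from the adjointness between pushouts and pullbacks. The guiding observation is that both perspectives must agree on \emph{instantaneous} data and differ only on \emph{transitions}: a persistent narrative records a transition over $[t,t+1]$ as the \emph{span} $F([t,t]) \leftarrow F([t,t+1]) \rightarrow F([t+1,t+1])$ formed by the two restriction maps of the sheaf, whereas a cumulative narrative records it as the \emph{cospan} $\hat{F}([t,t]) \rightarrow \hat{F}([t,t+1]) \leftarrow \hat{F}([t+1,t+1])$ formed by the two inclusion maps of the cosheaf. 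Accordingly, $\mathscr{K}$ should turn spans into cospans by \emph{pushout}, and $\mathscr{P}$ should turn cospans into spans by \emph{pullback}, while leaving the length-zero snapshots untouched.

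Concretely, for an interval $[a,b]$ let $\cat{T}_{/[a,b]}$ denote the slice poset of subintervals of $[a,b]$. I would define, using that $\cat{D}$ is both complete and cocomplete,
\[
\mathscr{P}(\hat{G})([a,b]) \;=\; \lim\big(\hat{G}|_{\cat{T}_{/[a,b]}}\big), \qquad \mathscr{K}(F)([a,b]) \;=\; \colim\big(F|_{\cat{T}_{/[a,b]}}\big),
\]
where the first diagram uses the inclusion maps of $\hat{G}$ and the second the restriction maps of $F$. Since the slice over a point $[t,t]$ is trivial, both constructions fix the instantaneous values, as required; and over a single length-one interval they reduce exactly to the pullback and pushout of the cospan/span above. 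An inclusion $[a,b]\subseteq[c,d]$ induces a subdiagram inclusion $\cat{T}_{/[a,b]}\hookrightarrow\cat{T}_{/[c,d]}$, and restricting cones (resp. cocones) gives the contravariant (resp. covariant) structure maps, so $\mathscr{P}(\hat{G})$ is a presheaf and $\mathscr{K}(F)$ a copresheaf. That these satisfy the conditions of Proposition~\ref{prop:def:sheaves} follows from a cofinality argument: for a cover $([a,p],[p,b])$ the two half-slices glued along $[p,p]$ are initial (resp. final) in $\cat{T}_{/[a,b]}$, so the slice $\lim$ splits as $\mathscr{P}(\hat{G})([a,p])\times_{\hat{G}([p,p])}\mathscr{P}(\hat{G})([p,b])$ and dually for $\mathscr{K}$. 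In the discrete case this recovers the generator-based description of Proposition~\ref{prop:specifying-a-discrete-narrative}. Functoriality on morphisms of narratives is inherited from functoriality of $\lim$ and $\colim$.

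For the adjunction I would exhibit a bijection $\mathsf{Cu}(\mathscr{K}(F),\hat{G}) \cong \mathsf{Pe}(F,\mathscr{P}(\hat{G}))$ natural in both arguments, whose heart is the \emph{local} correspondence over one length-one interval. Given a span $S=(a\leftarrow m\rightarrow b)$ and a cospan $C=(a'\rightarrow c'\leftarrow b')$, a cospan map from the pushout $\mathscr{K}(S)=(a\rightarrow a+_m b\leftarrow b)$ to $C$ is precisely a pair $a\to a'$, $b\to b'$ together with a mediating map out of $a+_m b$; a span map from $S$ to the pullback $\mathscr{P}(C)=(a'\leftarrow a'\times_{c'}b'\rightarrow b')$ is precisely the \emph{same} pair together with a mediating map into $a'\times_{c'}b'$. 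The universal properties show that each admissible pair determines its mediating map uniquely and that the two compatibility conditions (over $c'$, resp. over $m$) coincide, yielding the bijection on components. Because $\mathscr{K}$ and $\mathscr{P}$ fix the length-zero components verbatim, these local bijections agree at points and hence glue over all of $\cat{T}$; naturality is a diagram chase using uniqueness of the mediating maps. Equivalently, one can package this as a unit $\eta_F\colon F \Rightarrow \mathscr{P}\mathscr{K}F$ (the canonical map from each span into the pullback of its pushout) and a counit $\varepsilon_{\hat{G}}\colon \mathscr{K}\mathscr{P}\hat{G}\Rightarrow\hat{G}$ (the canonical map from the pushout of each pullback-cospan), verifying the triangle identities; this phrasing also makes visible that these comparison maps are generally \emph{not} isomorphisms, which is exactly the information loss discussed around Desideratum~\ref{desideratum-2}.

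I expect the main obstacle to be promoting the local, single-cover correspondence to a genuinely \emph{global, natural} hom-set isomorphism over an arbitrary time category. For discrete $\cat{T}$ one can induct on interval length via Proposition~\ref{prop:specifying-a-discrete-narrative}, and the argument is essentially complete; but for general (e.g.\ continuous) $\cat{T}$ there is no finite generating set of transitions, so one must work with the slice $(\mathrm{co})$limits directly and check two delicate points: first, that the cofinality/initiality argument genuinely collapses the subintervals straddling a cut point $p$ so that the slice $\lim$ and $\colim$ satisfy the pullback/pushout gluing at \emph{every} cover; and second, that a narrative morphism, which must be compatible with the entire lattice of covers simultaneously, corresponds under the bijection to one with the dual global compatibility. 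The fact that both functors leave the instantaneous snapshots fixed is precisely what keeps these coherence conditions tractable, since it forces the two sides of the bijection to share their length-zero components.
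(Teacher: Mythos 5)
Your proposal matches the paper's proof in all essentials: the paper likewise defines $\mathscr{K}(F)$ and $\mathscr{P}(\hat{G})$ on an interval $[a,b]$ as the colimit, respectively limit, of the narrative restricted to the slice $\cat{T}/[a,b]$, and establishes the adjunction by exhibiting the unit $\eta$ and counit $\epsilon$ as the canonical comparison maps supplied by the universal properties (your ``equivalently'' packaging), with the triangle identities following from uniqueness of mediating arrows. If anything you are more careful than the paper at the two points you flag: the paper dispatches the (co)sheaf condition for $\mathscr{K}(F)$ and $\mathscr{P}(\hat{G})$ with ``by construction'' where you supply the cofinality/gluing argument over a cover $([a,p],[p,b])$, and it does not spell out the local span--cospan correspondence underlying the hom-set bijection.
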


\begin{proof} We first prove that passing from the persistent to the cumulative perspective is functorial.  To this end, we define a functor \(\mathscr{K} \colon \mathsf{Pe}(\cat{T}, \mathsf{D}) \to \mathsf{Cu}(\cat{T}, \mathsf{D})\) which takes any sheaf \(F \colon \cat{T}^{op} \to \cat{D}\) to the copresheaf \(\mathscr{K}(F) \colon \cat{T} \to \cat{D}\) defined on objects by:
\begin{align*}
     & [a,b] \mapsto \mathscr{K}(F)_a^b := \colim ((\cat{T}/[a,b] \hookrightarrow \cat{T})^{op} \xrightarrow{F} \cat{D}),
\end{align*} 
where \(\mathscr{K}(F)_a^b\) serves as shorthand for \(\mathscr{K}(F)([a,b])\).  The existence of the colimit \(\mathscr{K}(F)_a^b \) follows from the hypothesis, since \((\cat{T}/[a,b] \hookrightarrow \cat{T})^{op} \xrightarrow{F} \cat{D}\) is a diagram in \(\cat{D}\).  Moreover, \(\mathscr{K}(F)\) is defined on arrows as follows:
\begin{align*}
     &  \bigl ( [a',b'] \xhookrightarrow{f} [a,b] \bigr ) \quad \mapsto \quad \bigl ( \mathscr{K}(F)_{a'}^{b'} \xrightarrow{\mathscr{K}(F)f} \mathscr{K}(F)_a^b \bigr ),
\end{align*}
where \(\mathscr{K}(F)f\) is the unique arrow from \(\mathscr{K}(F)_{a'}^{b'}\) to \(\mathscr{K}(F)_a^b\), determined by the universal property of \(\mathscr{K}(F)_{a'}^{b'}\). The fact that \(\mathscr{K}(F)f\) maps identities to identities and respects composition also follows from universal properties of certain colimits involved.  Moreover, $\mathscr{K}(F)$ satisfies the sheaf condition by construction (since its values in any interval are determined by an appropriate colimit).\\

Now we prove that passing from the cumulative to the persistent perspective is functorial.  For this, we define \(\mathscr{P}\) as the map that assigns to any cosheaf  \(\hat{F} \colon \cat{T} \to \cat{D}\) in  \(\Cu\) the presheaf \(\mathscr{P}(\hat{F})\colon \cat{T}^{op} \to \cat{D}\) defined on objects by:
\begin{align*}
     & [a,b] \mapsto \mathscr{P}(\hat{F})_a^b := \lim (\cat{T}/[a,b] \hookrightarrow \cat{T} \xrightarrow{\hat{F}} \cat{D}).
\end{align*}
We will use the notation \(\mathscr{P}(\hat{F})_a^b\) instead of \(\mathscr{P}(\hat{F})([a,b])\).  The existence of \(\lim (\cat{T}/[a,b] \hookrightarrow \cat{T} \xrightarrow{\hat{F}} \cat{D})\) follows from the hypothesis, since  \(\cat{T}/[a,b] \hookrightarrow \cat{T} \xrightarrow{\hat{F}} \cat{D}\) is a diagram in \(\cat{D}\).   Furthermore, \(\mathscr{P}(\hat{F})\) is defined on the arrows as follows:
\[
     \bigl ( [a',b'] \xhookrightarrow{f} [a,b] \bigr) \quad \mapsto \quad \bigl ( \mathscr{P}(\hat{F})_a^b \xrightarrow{\mathscr{P}(\hat{F})f} \mathscr{P}(\hat{F})_{a'}^{b'} \bigr),
\]
where \(\mathscr{P}(\hat{F})f\) is the unique arrow from \(\mathscr{P}(\hat{F})_a^b\) to \(\mathscr{P}(\hat{F})_{a'}^{b'}\), determined by the universal property of \(\mathscr{P}(\hat{F})_{a'}^{b'}\). The fact that \(\mathscr{P}(\hat{F})\) maps identities to identities and respects composition follows from universal properties of certain limits involved.  Moreover, $\mathscr{P}(\hat{F})$ satisfies the sheaf condition by construction (since its values in any interval are determined by an appropriate limit).\\

We will now prove that there exist an adjunction \(\mathscr{K} \dashv \mathscr{P}\), which relates the two perspectives. For this, we will build a pair of natural transformations \(\mathscr{KP}\xrightarrow{\mathsf{\epsilon}} \cat{1}_{\mathsf{Cu(\cat{T},D)}} \) and \(\cat{1}_{\mathsf{Pe(\cat{T},D)}} \xrightarrow{\mathsf{\eta}} \mathscr{PK}\) that make the triangle identities commute:

% https://q.uiver.app/#q=WzAsNixbMCwwLCJcXG1hdGhjYWx7UH0iXSxbMiwwLCJcXG1hdGhjYWx7UEtQfSJdLFsyLDIsIlxcbWF0aGNhbHtQfSJdLFs0LDAsIlxcbWF0aGNhbHtLfSJdLFs2LDAsIlxcbWF0aGNhbHtLUEt9Il0sWzYsMiwiXFxtYXRoY2Fse0t9Il0sWzAsMSwiXFxldGFcXG1hdGhjYWx7UH0iXSxbMSwyLCJcXG1hdGhjYWx7UH1cXGVwc2lsb24iXSxbMCwyLCJcXG1hdGhzZnsxfV97XFxtYXRoY2Fse1B9fSIsMl0sWzMsNCwiXFxtYXRoY2Fse0t9XFxldGEiXSxbNCw1LCJcXGVwc2lsb25cXG1hdGhjYWx7S30iXSxbMyw1LCJcXG1hdGhzZnsxfV97XFxtYXRoY2Fse0t9fSIsMl1d
\[\begin{tikzcd}
	{\mathscr{P}} && {\mathscr{PKP}} && {\mathscr{K}} && {\mathscr{KPK}} \\
	\\
	&& {\mathscr{P}} &&&& {\mathscr{K}}
	\arrow["{\eta\mathscr{P}}", from=1-1, to=1-3]
	\arrow["{\mathsf{1}_{\mathscr{P}}}"', from=1-1, to=3-3]
	\arrow["{\mathscr{P}\epsilon}", from=1-3, to=3-3]
	\arrow["{\mathscr{K}\eta}", from=1-5, to=1-7]
	\arrow["{\mathsf{1}_{\mathscr{K}}}"', from=1-5, to=3-7]
	\arrow["{\epsilon\mathscr{K}}", from=1-7, to=3-7]
\end{tikzcd}\]

% \hat{F} \colon I \to \cat{D} 
% \mathsf{Cu(I,D)}
% \mathscr{PKP}
We need to define the components \(\mathscr{KP}(\hat{F}) \xrightarrow{\mathsf{\epsilon}_{\hat{F}}} \cat{1}_{\mathsf{Cu(\cat{T},D)}}(\hat{F}) \) for every cosheaf in \(\mathsf{Cu(\cat{T},D)}\). This involves choosing natural transformations \(\epsilon_{\hat{F}_a^b} : \mathscr{KP}(\hat{F})_a^b \to \hat{F}_a^b\) for each interval \([a,b]\) in \(\cat{T}\). As \(\mathscr{KP}(\hat{F})_a^b\) is a colimit, there exists only one such arrow. We define \(\epsilon_{\hat{F}_a^b}\) to be this unique arrow, as illustrated in the cummutative diagram on the left:

% https://q.uiver.app/#q=WzAsMTAsWzEsNCwiXFxtYXRoY2Fse1B9KFxcaGF0e0Z9KV9hXmIiXSxbMiwzLCIgXFxoYXR7Rn1fYl5iID0gXFxtYXRoY2Fse1B9KFxcaGF0e0Z9KV9iXmIiXSxbMCwzLCJcXGhhdHtGfV9hXmEgPSBcXG1hdGhjYWx7UH0oXFxoYXR7Rn0pX2FeYSJdLFsxLDIsIlxcbWF0aGNhbHtLfVAoXFxoYXR7Rn0pX2FeYiJdLFsxLDAsIiBcXGhhdHtGfV9hXmIiXSxbNCwzLCJGX2FeYSA9IFxcbWF0aGNhbHtLfShGKV9hXmEiXSxbNSwyLCJcXG1hdGhjYWx7UH1cXG1hdGhjYWx7S30oRilfYV5iIl0sWzUsNCwiXFxtYXRoY2Fse0t9KEYpX2FeYiJdLFs2LDMsIkZfYl5iID0gXFxtYXRoY2Fse0t9KEYpX2JeYiJdLFs1LDAsIkZfYV5iIl0sWzAsMV0sWzAsMl0sWzIsM10sWzEsM10sWzMsMCwiIiwxLHsic3R5bGUiOnsibmFtZSI6ImNvcm5lciJ9fV0sWzEsNCwiIiwxLHsiY3VydmUiOjJ9XSxbMiw0LCIiLDEseyJjdXJ2ZSI6LTJ9XSxbMyw0LCJcXGVwc2lsb25fe1xcaGF0e0Z9X2FeYn0iLDEseyJjb2xvdXIiOlswLDYwLDYwXSwic3R5bGUiOnsiYm9keSI6eyJuYW1lIjoiZGFzaGVkIn19fSxbMCw2MCw2MCwxXV0sWzYsNV0sWzUsN10sWzgsN10sWzYsOF0sWzksNSwiIiwxLHsiY3VydmUiOjJ9XSxbOSw4LCIiLDEseyJjdXJ2ZSI6LTJ9XSxbOSw2LCJcXGV0YV97Rl9hXmJ9IiwxLHsiY29sb3VyIjpbMCw2MCw2MF0sInN0eWxlIjp7ImJvZHkiOnsibmFtZSI6ImRhc2hlZCJ9fX0sWzAsNjAsNjAsMV1dLFs2LDcsIiIsMSx7InN0eWxlIjp7Im5hbWUiOiJjb3JuZXIifX1dXQ==
\[\begin{tikzcd}[column sep=small]
	& { \hat{F}_a^b} &&&& {F_a^b} \\
	\\
	& {\mathscr{KP}(\hat{F})_a^b} &&&& {\mathscr{P}\mathscr{K}(F)_a^b} \\
	{\hat{F}_a^a = \mathscr{P}(\hat{F})_a^a} && { \hat{F}_b^b = \mathscr{P}(\hat{F})_b^b} && {F_a^a = \mathscr{K}(F)_a^a} && {F_b^b = \mathscr{K}(F)_b^b} \\
	& {\mathscr{P}(\hat{F})_a^b} &&&& {\mathscr{K}(F)_a^b}
	\arrow["{\eta_{F_a^b}}"{description}, color={rgb,255:red,214;green,92;blue,92}, dashed, from=1-6, to=3-6]
	\arrow[curve={height=12pt}, from=1-6, to=4-5]
	\arrow[curve={height=-12pt}, from=1-6, to=4-7]
	\arrow["{\epsilon_{\hat{F}_a^b}}"{description}, color={rgb,255:red,214;green,92;blue,92}, dashed, from=3-2, to=1-2]
	\arrow["\lrcorner"{anchor=center, pos=0.125, rotate=-45}, draw=none, from=3-2, to=5-2]
	\arrow[from=3-6, to=4-5]
	\arrow[from=3-6, to=4-7]
	\arrow["\lrcorner"{anchor=center, pos=0.125, rotate=-45}, draw=none, from=3-6, to=5-6]
	\arrow[curve={height=-12pt}, from=4-1, to=1-2]
	\arrow[from=4-1, to=3-2]
	\arrow[curve={height=12pt}, from=4-3, to=1-2]
	\arrow[from=4-3, to=3-2]
	\arrow[from=4-5, to=5-6]
	\arrow[from=4-7, to=5-6]
	\arrow[from=5-2, to=4-1]
	\arrow[from=5-2, to=4-3]
\end{tikzcd}\]

Applying a dual argument, we can construct \(\cat{1}_{\mathsf{Pe(\cat{T},D)}} \xrightarrow{\mathsf{\eta}} \mathscr{PK}\) using the natural transformations \(\eta_{F_a^b}\), as illustrated in the diagram on the right. The existence of these natural transformations \(\epsilon\) and \(\eta\) is sufficient to ensure that the triangle identities commute. This is attributed to the universal map properties of \( \mathscr{KP}(\hat{F})_a^b \) and \( \mathscr{P}\mathscr{K}(F)_a^b \), respectively.
\end{proof}

From a practical perspective, Theorem~\ref{thm:adjunction} implies that in general there is the potential for a loss of information when one passes from one perspective (the persistent one, say) to another (the cumulative one) and back again. Furthermore the precise way in which this information may be lost is explicitly codified by the unit \(\eta\) and co-unit \(\epsilon\) of the adjunction. These observations, which were hidden in other encodings of temporal data~\cite{OthonTempSurvey, KEMPE2002820, FlocchiniQuattrocchiSantoroCasteigts}, are of great practical relevance since it means that one must take a great deal of care when collecting temporal data: the choices of mathematical representations may not be interchangeable.

\subsection{Collecting Examples: Narratives are Everywhere}\label{sec:examples-of-narratives}
\paragraph{Temporal graphs.} Think of satellites orbiting around the earth where, at each given time, the distance between any two given satellites determines their ability to communicate. To understand whether a signal can be sent from one satellite to another one needs a temporal graph: it does not suffice to solely know the static structure of the time-indexed communication networks between these satellites, but instead one needs to also keep track of the relationships between these snapshots. We can achieve this with narratives of graphs, namely cosheaves (or sheaves, if one is interested in the persistent model) of the form $\Ga \colon \cat{T} \to \cat{Grph}$ from a time category $\cat{T}$ into \(\cat{Grph}\), \textit{a} category of graphs.\footnote{Note that many categories of graphs are presheaf toposes and thus have all limits and colimits.} There are many ways in which one could define categories of graphs; for the purposes of recovering definitions from the literature we will now briefly review the category of graphs we choose to work with. 

We view graphs as objects in \(\cat{Set}^{\cat{SGr}}\), the functor category from the graph \textit{schema} to set. It has as objects functors \(G \colon \cat{SGr} \to \cat{Set}\) where \(\cat{SGr}\) is thought of as a \textit{schema} category with only two objects called \(E\) and \(V\) and two non-identity morphisms \(s, t \colon E \to V\) which should be thought as mnemonics for `source' and `target'. We claim that \(\cat{Set}^{\cat{SGr}}\) is the category of directed multigraphs and graph homomorphisms. To see this, notice that any functor \(G \colon \cat{SGr} \to \cat{Set}\) consists of two sets: \(G(E)\) (the edge set) and \(G(V)\) (the vertex set). Moreover each edge \(e \in G(E)\) gets mapped to two vertices (namely its source \(G(s)(e)\) and target \(G(t)(e)\)) via the functions \(G(s) \colon G(E) \to G(V)\) and \(G(t) \colon G(E) \to G(V)\). Arrows in $\cat{Set}^{\cat{SGr}}$ are natural transformations between functors.  To see that natural transformations \(\eta \colon G \Rightarrow H\) define graph homomorphisms, note that any such \(\eta\) consists of functions  \(\eta_E \colon G(E) \to H(E)\) and \(\eta_V \colon G(V) \to H(V)\) (its components at \(E\) and \(V\)) which commute with the source and target maps of \(G\) and \(H\). 

The simplest definition of temporal graphs in the literature is that due to Kempe, Kleinberg and Kumar~\cite{KEMPE2002820} which views temporal graphs as a sequence of edge sets over a fixed vertex set. 

\newsavebox\Gzero
\sbox\Gzero{
\begin{tikzpicture}[auto, circle, >=stealth, every node/.style={draw=black, inner sep=1pt}, scale=0.5, fill=black]
\draw (0,0) node[label=135:$a$] (va) {};
\draw (0,1) node[label=135:$b$] (vb) {};
\draw (1,1) node[label=45:$c$] (vc) {};
\draw (1,0) node[label=45:$d$] (vd) {};

\path[draw, thick] (va) -- (vb) -- (vc) -- (vd) -- (va);
\end{tikzpicture}
}
\newsavebox\Gone
\sbox\Gone{
\begin{tikzpicture}[auto, circle, >=stealth, every node/.style={draw=black, inner sep=1pt}, scale=0.5, fill=black]
\draw (0,0) node[label=135:$a$] (va) {};
\draw (0,1) node[label=135:$b$] (vb) {};
\draw (1,1) node[label=45:$c$] (vc) {};
\draw (1,0) node[label=45:$d$] (vd) {};

\path[draw, thick] (va) -- (vc);
\path[draw, thick] (vb) -- (vd);
\end{tikzpicture}
}
\newsavebox\Gtwo
\sbox\Gtwo{
\begin{tikzpicture}[auto, circle, >=stealth, every node/.style={draw=black, inner sep=1pt}, scale=0.5, fill=black]
\draw (0,0) node[label=135:$a$] (va) {};
\draw (0,1) node[label=135:$b$] (vb) {};
\draw (1,1) node[label=45:$c$] (vc) {};
\draw (1,0) node[label=45:$d$] (vd) {};

\path[draw, thick] (va) -- (vc);
\path[draw, thick] (va) -- (vd);
\path[draw, thick] (vb) -- (vc);
\path[draw, thick] (vb) -- (vd);
\end{tikzpicture}
}
\newsavebox\Gempty
\sbox\Gempty{
\begin{tikzpicture}[auto, circle, >=stealth, every node/.style={draw=black, inner sep=1pt}, scale=0.5, fill=black]
\draw (0,0) node[label=135:$a$] (va) {};
\draw (0,1) node[label=135:$b$] (vb) {};
\draw (1,1) node[label=45:$c$] (vc) {};
\draw (1,0) node[label=45:$d$] (vd) {};
\end{tikzpicture}
}

\newsavebox\Gfull
\sbox\Gfull{
\begin{tikzpicture}[auto, circle, >=stealth, every node/.style={draw=black, inner sep=1pt}, scale=0.5, fill=black]
\draw (0,0) node[label=135:$a$] (va) {};
\draw (0,1) node[label=135:$b$] (vb) {};
\draw (1,1) node[label=45:$c$] (vc) {};
\draw (1,0) node[label=45:$d$] (vd) {};

\path[draw, thick] (va) -- (vb);
\path[draw, thick] (va) -- (vc);
\path[draw, thick] (va) -- (vd);
\path[draw, thick] (vb) -- (vc);
\path[draw, thick] (vb) -- (vd);
\path[draw, thick] (vc) -- (vd);
\end{tikzpicture}
}

\newsavebox\MGfull
\sbox\MGfull{
\begin{tikzpicture}[auto, circle, >=stealth, every node/.style={draw=black, inner sep=1pt}, scale=0.5, fill=black]
\draw (0,0) node[label=135:$a$] (va) {};
\draw (0,1) node[label=135:$b$] (vb) {};
\draw (1,1) node[label=45:$c$] (vc) {};
\draw (1,0) node[label=45:$d$] (vd) {};

\path[draw, thick] (va) -- (vb);
\path[draw, thick] (va) -- (vc);
\draw[thick] (va) edge[bend right] (vd);
\draw[thick] (vd) edge[bend right] (va);
\draw[thick] (vb) edge[bend right] (vc);
\draw[thick] (vc) edge[bend right] (vb);
\path[draw, thick] (vb) -- (vd);
\path[draw, thick] (vc) -- (vd);
\end{tikzpicture}
}
\begin{figure}
\begin{subfigure}[ht]{0.75\textwidth}
\centering
\begin{tikzpicture}[auto, circle, >=stealth, every node/.style={draw=black, inner sep=1pt}, scale=1, fill=black]
\draw (0,0) node[label=135:$a$] (va) {};
\draw (0,1) node[label=135:$b$] (vb) {};
\draw (1,1) node[label=45:$c$] (vc) {};
\draw (1,0) node[label=45:$d$] (vd) {};

\path[draw, thick] (va) -- (vb) -- (vc) -- (vd) -- (va);

\node[draw = none] at (0.5, -0.25) {$G_0$};
\end{tikzpicture}
\begin{tikzpicture}[auto, circle, >=stealth, every node/.style={draw=black, inner sep=1pt}, scale=1, fill=black]
\draw (0,0) node[label=135:$a$] (va) {};
\draw (0,1) node[label=135:$b$] (vb) {};
\draw (1,1) node[label=45:$c$] (vc) {};
\draw (1,0) node[label=45:$d$] (vd) {};

\path[draw, thick] (va) -- (vc);
\path[draw, thick] (vb) -- (vd);

\node[draw = none] at (0.5, -0.25) {$G_1$};
\end{tikzpicture}
\begin{tikzpicture}[auto, circle, >=stealth, every node/.style={draw=black, inner sep=1pt}, scale=1, fill=black]
\draw (0,0) node[label=135:$a$] (va) {};
\draw (0,1) node[label=135:$b$] (vb) {};
\draw (1,1) node[label=45:$c$] (vc) {};
\draw (1,0) node[label=45:$d$] (vd) {};

\path[draw, thick] (va) -- (vc);
\path[draw, thick] (va) -- (vd);
\path[draw, thick] (vb) -- (vc);
\path[draw, thick] (vb) -- (vd);

\node[draw = none] at (0.5, -0.25) {$G_2$};
\end{tikzpicture}
\caption{A temporal graph $\Ga$ (in the sense of Definition~\ref{def:simple_definition_of_temporal_graph}) with  three snapshots}
\label{fig:temp_graph_example}
\end{subfigure}
\centering
\begin{subfigure}[ht]{0.4\textwidth}
\centering
\begin{tikzpicture}[auto, circle, >=stealth, every node/.style={draw=black, inner sep=1pt}, scale=1, fill=black]
\draw (0,0) node (v00) {\scalebox{0.5}{\usebox{\Gempty}}};
\draw (-1,-1) node (vm1m1) {\scalebox{0.5}{\usebox{\Gempty}}};
\draw (1,-1) node (v1m1) {\scalebox{0.5}{\usebox{\Gone}}};
\draw (-2,-2) node (vm2m2) {\scalebox{0.5}{\usebox{\Gzero}}};
\draw (0,-2) node (v0m2) {\scalebox{0.5}{\usebox{\Gone}}};
\draw (2,-2) node (v2m2) {\scalebox{0.5}{\usebox{\Gtwo}}};

\draw[->] (v00) -- (vm1m1);
\draw[->] (v00) -- (v1m1);
\draw[->] (vm1m1) -- (vm2m2);
\draw[->] (vm1m1) -- (v0m2);
\draw[->] (v1m1) -- (v0m2);
\draw[->] (v1m1) -- (v2m2);
\end{tikzpicture}
\caption{The persistent narrative of $\Ga$}
\label{fig:pernar_example}
\end{subfigure}
\hfill
\begin{subfigure}[ht]{0.4\textwidth}
\centering
\begin{tikzpicture}[auto, circle, >=stealth, every node/.style={draw=black, inner sep=1pt}, scale=1, fill=black]
\draw (0,0) node (v00) {\scalebox{0.5}{\usebox{\MGfull}}};
\draw (-1,-1) node (vm1m1) {\scalebox{0.5}{\usebox{\Gfull}}};
\draw (1,-1) node (v1m1) {\scalebox{0.5}{\usebox{\Gtwo}}};
\draw (-2,-2) node (vm2m2) {\scalebox{0.5}{\usebox{\Gzero}}};
\draw (0,-2) node (v0m2) {\scalebox{0.5}{\usebox{\Gone}}};
\draw (2,-2) node (v2m2) {\scalebox{0.5}{\usebox{\Gtwo}}};

\draw[<-] (v00) -- (vm1m1);
\draw[<-] (v00) -- (v1m1);
\draw[<-] (vm1m1) -- (vm2m2);
\draw[<-] (vm1m1) -- (v0m2);
\draw[<-] (v1m1) -- (v0m2);
\draw[<-] (v1m1) -- (v2m2);
\end{tikzpicture}
\caption{The cumulative narrative of $\Ga$}
\label{fig:cunar_example}
\end{subfigure}
\caption{A temporal graph along with its persistent and cumulative narratives}
\label{fig:encoding-temp-grphs-example}
\end{figure}

\begin{definition}[\cite{KEMPE2002820}]\label{def:simple_definition_of_temporal_graph}
A temporal graph $\mathcal{G}$ consists of a pair $\bigl (V,(E_i)_{i \in \mathbb{N}} \bigr)$ where \(V\) is a set and \((E_i)_{i \in \mathbb{N}}\) is a sequence of binary relations on \(V\).
\end{definition}

The above definition can be immediately formulated in terms of our discrete cumulative (resp. persistent) graph narratives whereby a temporal graph is a cumulative narrative valued in the category \(\cat{Set}^{\cat{SGr}}\) with discrete time. To see this, observe that, since Definition~\ref{def:simple_definition_of_temporal_graph} assumes a fixed vertex set and since it assumes simple graphs, the cospans (resp. spans) can be inferred from the snapshots (see Figure~\ref{fig:encoding-temp-grphs-example} for examples). For instance, in the persistent case, there is one maximum common subgraph to use as the apex of each span associated to the inclusions of intervals of length zero into intervals of length one. This, combined with Proposition~\ref{prop:specifying-a-discrete-narrative} yields a unique persistent graph narrative which encodes any given temporal graph (as given in Definition~\ref{def:simple_definition_of_temporal_graph}). 

Notice that once an edge or vertex disappears in a persistent (or cumulative) graph narrative, it can never reappear: the only way to reconnect two vertices is to create an entirely new edge. In particular this means that cumulative graph narratives associate to most intervals of time a multigraph rather than a simple graph (see Figure~\ref{fig:cunar_example}). This is a very natural requirement, for instance: imagining a good being delivered from $u$ to $v$ at times $t$ and $t'$, it is clear that the goods need not be delivered by the same person and, in any event, the very acts of delivery are different occurrences.\footnote{If one insists on avoiding this ``duplication'' of edge- or vertex-appearances, then this can be achieved by changing the codomain of these narratives to another category of graphs (for instance to a category of simple, reflexive graphs or to a slice category of some large ambient graph).}

As shown by Patterson, Lynch and Fairbanks~\cite{Patterson2022categoricaldata}, by passing to slice categories, one can furthermore encode various categories of labelled data. For instance, one can fix the monoid of natural numbers viewed as a single-vertex graph with a loop edge for each natural number \(G_{B\mathbb{N}}~\colon~\cat{SGr}~\to~\cat{Set}\) having \(G_{B\mathbb{N}}(V) = 1\) and \(G_{B\mathbb{N}}(E) = \mathbb{N})\) and consider the slice category \(\cat{Set}^{\cat{SGr}} / G_{B\mathbb{N}}\). This will have pairs \((G, \lambda \colon G \to G_{B\mathbb{N}})\) as objects where \(G\) is a graph and \(\lambda\) is a graph homomorphism effectively assigning a natural number label to each edge of \(G\). The morphisms of \(\cat{Set}^{\cat{SGr}} / G_{B\mathbb{N}}\) are label-preserving graph homomorphisms. Thus narratives valued in \(\cat{Set}^{\cat{SGr}} / G_{B\mathbb{N}}\) can be interpreted as time-varying graphs whose edges come equipped with \textbf{latencies} (which can change with time). 

By similar arguments, it can be easily shown that one can encode categories of graphs which have labeled vertices and labeled edges~\cite{Patterson2022categoricaldata}. Narratives in such categories correspond to time-varying graphs equipped with both \textbf{vertex- and edge-latencies}. This allows us to recover the following notion, due to Casteigts, Flocchini, Quattrociocchi and Santoro, of a time-varying graph which has recently attracted much attention in the literature.  

\begin{definition}[Section 2 in \cite{FlocchiniQuattrocchiSantoroCasteigts}]\label{def:general_definition_of_temporal_graph}
Take $\mathbb{T}$ to be either $\mathbb{N}$ or $\mathbb{R}$. A \define{$\mathbb{T}$-temporal (directed) network} is a quintuple $(G, \rho_e, \eta_e, \rho_v, \eta_v)$ where $G$ is a (directed) graph and $\rho_e$, $\eta_e$, $\rho_v$ and $\eta_v$ are functions of the following types: 
\begin{align*}
    \rho_e &: E(G) \times \mathbb{T} \to \{\bot, \top\}, &\eta_e : E(G) \times \mathbb{T} \to \mathbb{T}, \\ 
    \rho_v &: V(G) \times \mathbb{T} \to \{\bot, \top\}, &\eta_v : V(G) \times \mathbb{T} \to \mathbb{T} 
\end{align*}
where \(\rho_e\) and \(\rho_v\) are are functions indicating whether an edge or vertex is active at a given time and where \(\eta_e\) and \(\eta_v\) are latency functions indicating the amount of time required to traverse an edge or vertex. 
\end{definition}

We point out that this definition, stated as in~\cite{FlocchiniQuattrocchiSantoroCasteigts} does \textit{not enforce any coherence conditions} to ensure that edges are present at times in which their endpoints are. Our approach, in contrast, comes immediately equipped with all such necessary coherence conditions.

\paragraph{Other structures.}

There exist diverse types of graphs, such as reflexive, symmetric, and half-edge graphs, each characterized by the nature of the relation aimed to be modeled. Each graph type assemble into specific categories, and the selection of graph categories distinctly shapes the resulting graph narratives. To systematically investigate the construction of various graph narratives, we employ a category-theoretic trick. This involves encoding these diverse graphs as functors, specifically set-valued copresheaves, over a domain category known as a schema. The schema encapsulates the syntax of a particular graph type (e.g., symmetric graphs, reflexive graphs, etc.), allowing us to encode a multitude of structures. Notable examples of such schemata include \(\cat{SSGr}\), reflexive graphs \(\cat{SRGr}\), symmetric-and-reflexive graphs \(\cat{SSRGr}\) and half-edge graphs \(\cat{SHeGr}\).

% https://tikzcd.yichuanshen.de/#N4Igdg9gJgpgziAXAbVABwnAlgFyxMJZABgBpiBdUkANwEMAbAVxiRAFEQBfU9TXfIRRkAjFVqMWbAGrdeIDNjwEiANnLj6zVohAAJOXyWC1pMdS1TdsnkYErhpAEybJOkAB0POGAA8cwHAAdDhBAARcYXBhXgDGWABOsWFYYQC8YTgx3n4BYXRgUBGZcYnJqRkItgr8ykLIThoWbmyc1Yr29Y3mEtoyhjXGDg3Orn26Xj7+gSHhkdGlSdkwDAzpmdnxS14rDAMddUQALE29Vhz7tSYoJz2W7jbyB9fIJy7N454508GhxQseLbldZZSa5YD5QrFHCLYGVbJTPIFIrzTZlZarEFo7YeXaXIb1dTvM7uME-WbFLBgGjY8rU9ZeAC2dBwAAs4AAzYBYKBcAD6Bi44hgUAA5vAiKAOQkIIykABmag4CBIRoktisarS2WqpUqxBqhgQCBoIgiAAcZA5jDgMHEDDoACMVgAFK4OEAJLCi1k4EAfc5UmgDbVyxBkEDKpARo0ms2W0jWhi2+1O13uoSe72+-3q3RYEMysMRqOIETUZ2FJAAWnlEfubAQ1AdzoYboJbC9Pr9WqL0b1SHLIErUAV9Zauh78lDSAArAPEOphzAq4hq0cAJwB9xNkAt9Md3RdnO9nWIeeR-VLkdITfbthTqV9xcLi8Nia41aFs-mhe-vfGqaKAWlaNp2s2aZthmnbZn6975t+Yb-qWW7Lqu66oe+IC7vuUGHlm3aIUgyH6qhN6IHeeaRkRiCoaW-5YTsX4Qa27adDBhFClwQA
\[
\adjustbox{scale=1.5, max width=\textwidth}{%,center}{
\begin{tikzcd}
E \arrow["i"', loop, distance=2em, in=125, out=55] \arrow[d, "s"', bend right] \arrow[d, "t", bend left] & E \arrow[d, "s"', bend right=49] \arrow[d, "t", bend left=49] & E \arrow["i"', loop, distance=2em, in=125, out=55] \arrow[d, "s"', bend right=49] \arrow[d, "t", bend left=49] & H \arrow["inv"', loop, distance=2em, in=125, out=55] \\
V                                                                                                        & V \arrow[u, "\ell"]                                           & V \arrow[u, "\ell"']                                                                                           & V \arrow[u, "e"]                                     \\
\text{s.t. } s \circ i = t \text{ and } t\circ i = s                                                     & \text{s.t. } s \circ \ell = t \circ \ell                      & \text{s.t. } s \circ i = t \text{ and } t\circ i = s \text{ and } s \circ \ell = t \circ \ell                  & \text{s.t. } \mathsf{inv} \circ \mathsf{inv} = \mathsf{id}_H          
\end{tikzcd}
}
\]

These are all subcategories of multigraphs but other relational structures of higher order such as Petri nets and simplicial complexes can also be constructed using this approach.  For instance, the following is the schema for Petri nets~\cite{Patterson2022categoricaldata}:

% https://q.uiver.app/#q=WzAsNSxbMSwwLCJcXG1hdGhzZntJbnB1dH0iXSxbMSwxLCJcXG1hdGhzZntTcGVjaWVzfSJdLFsxLDIsIlxcbWF0aHNme091dHB1dH0iXSxbMiwxLCJcXG1hdGhzZntUcmFuc2l0aW9ufSJdLFswLDEsIlxcbWF0aHNme1Rva2VufSJdLFswLDFdLFsyLDFdLFs0LDFdLFswLDNdLFsyLDNdXQ==&macro_url=https%3A%2F%2Fq.uiver.app%2F%23q%3DWzAsNSxbMCwxLCJGXzFeMTo9XFx7YV8xLCBhXzIsIGIsIGNcXH0iXSxbNCwxLCJGXzNeMzo9XFx7YV9cXHN0YXIsYicsIGMnXFx9Il0sWzEsMCwiRl8xXjIgPSBcXHthXzEsIGFfMixjXFx9Il0sWzIsMSwiRl8yXjI6PVxce2FfXFxzdGFyLGInLGNcXH0iXSxbMywwLCJGXzJeMyA9IFxce2FfXFxzdGFyLCBiJ1xcfSJdLFsyLDAsImZfezEsMn1eMSJdLFsyLDMsImZfezEsMn1eMiIsMl0sWzQsMywiZl97MiwzfV4yIl0sWzQsMSwiZl97MiwzfV4zIiwyXV0%3D
\[\begin{tikzcd}
	& {\mathsf{Input}} \\
	{\mathsf{Token}} & {\mathsf{Species}} & {\mathsf{Transition}} \\
	& {\mathsf{Output}}
	\arrow[from=1-2, to=2-2]
	\arrow[from=1-2, to=2-3]
	\arrow[from=2-1, to=2-2]
	\arrow[from=3-2, to=2-2]
	\arrow[from=3-2, to=2-3]
\end{tikzcd}\]

It is known that all of these categories of \(\cat{CSets}\) are \textit{presheaf toposes} (and thus admit limits and colimits which are computed point-wise) and thus we can define narratives as presheaves \(F \colon \cat{T}^{op} \to \cat{CSet}\) satisfying the sheaf condition stated in Proposition~\ref{prop:def:sheaves} for any choice of schema (e.g., \(\cat{SSGr}\), \(\cat{SRGr}\), \(\cat{SSRGr}\) \(\cat{SHeGr}\), etc.).

\begin{note}[Beyond relational structures]
Proposition~\ref{prop:def:sheaves} indeed states that  we can define narratives valued in any category that has limits and/or colimits. For instance, the category \(\cat{Met}\) of metric spaces and contractions is a complete category, allowing us to study persistent \(\cat{Met}\)-narratives. Diagram~\ref{diagram:companies-metric-spaces} illustrates a \(\cat{Met}\)-narrative that recounts the story of how the geographical distances of ice cream companies in Venice changed over time. Each snapshot (depicted in pink) represents a metric space, and all morphisms are canonical isometries. The curious reader can use it to speculate about why company $b$ ceased its activities and what happened to the physical facilities of companies $a_1$ and $c$.
\begin{equation}\label{diagram:companies-metric-spaces}
    \adjustbox{scale=1.5, max width=.95\textwidth}{%,center}{

\tikzset{every picture/.style={line width=0.75pt}} %set default line width to 0.75pt        

\begin{tikzpicture}[x=0.75pt,y=0.75pt,yscale=-1,xscale=1]
%uncomment if require: \path (0,165); %set diagram left start at 0, and has height of 165

%Rounded Rect [id:dp9494842872735878] 
\draw  [color={rgb, 255:red, 255; green, 255; blue, 255 }  ,draw opacity=1 ][fill={rgb, 255:red, 255; green, 192; blue, 203 }  ,fill opacity=1 ] (19.54,110.16) .. controls (19.54,103.96) and (24.56,98.93) .. (30.77,98.93) -- (132.44,98.93) .. controls (138.65,98.93) and (143.67,103.96) .. (143.67,110.16) -- (143.67,143.85) .. controls (143.67,150.05) and (138.65,155.08) .. (132.44,155.08) -- (30.77,155.08) .. controls (24.56,155.08) and (19.54,150.05) .. (19.54,143.85) -- cycle ;

%Straight Lines [id:da7056933848921817] 
\draw    (457.62,62.4) -- (443.07,78.46) -- (430.69,91.42) ;
\draw [shift={(429.31,92.86)}, rotate = 313.69] [color={rgb, 255:red, 0; green, 0; blue, 0 }  ][line width=0.75]    (10.93,-4.9) .. controls (6.95,-2.3) and (3.31,-0.67) .. (0,0) .. controls (3.31,0.67) and (6.95,2.3) .. (10.93,4.9)   ;
%Straight Lines [id:da04794267732743929] 
\draw    (179.61,62.14) -- (153.41,90.72) ;
\draw [shift={(152.06,92.2)}, rotate = 312.5] [color={rgb, 255:red, 0; green, 0; blue, 0 }  ][line width=0.75]    (10.93,-4.9) .. controls (6.95,-2.3) and (3.31,-0.67) .. (0,0) .. controls (3.31,0.67) and (6.95,2.3) .. (10.93,4.9)   ;
%Straight Lines [id:da9256611021932947] 
\draw    (260.32,61.88) -- (287.24,91.74) ;
\draw [shift={(288.58,93.22)}, rotate = 227.96] [color={rgb, 255:red, 0; green, 0; blue, 0 }  ][line width=0.75]    (10.93,-4.9) .. controls (6.95,-2.3) and (3.31,-0.67) .. (0,0) .. controls (3.31,0.67) and (6.95,2.3) .. (10.93,4.9)   ;
%Straight Lines [id:da33396431136558724] 
\draw    (534.5,62.65) -- (561.64,91.76) ;
\draw [shift={(563,93.22)}, rotate = 227] [color={rgb, 255:red, 0; green, 0; blue, 0 }  ][line width=0.75]    (10.93,-4.9) .. controls (6.95,-2.3) and (3.31,-0.67) .. (0,0) .. controls (3.31,0.67) and (6.95,2.3) .. (10.93,4.9)   ;
%Rounded Rect [id:dp25967434304965376] 
\draw  [color={rgb, 255:red, 255; green, 255; blue, 255 }  ,draw opacity=1 ][fill={rgb, 255:red, 255; green, 192; blue, 203 }  ,fill opacity=1 ] (157.49,12.82) .. controls (157.49,6.62) and (162.52,1.59) .. (168.72,1.59) -- (270.4,1.59) .. controls (276.6,1.59) and (281.63,6.62) .. (281.63,12.82) -- (281.63,46.51) .. controls (281.63,52.72) and (276.6,57.74) .. (270.4,57.74) -- (168.72,57.74) .. controls (162.52,57.74) and (157.49,52.72) .. (157.49,46.51) -- cycle ;

%Rounded Rect [id:dp7934017799582207] 
\draw  [color={rgb, 255:red, 255; green, 255; blue, 255 }  ,draw opacity=1 ][fill={rgb, 255:red, 255; green, 192; blue, 203 }  ,fill opacity=1 ] (297.36,109.76) .. controls (297.36,103.56) and (302.39,98.53) .. (308.59,98.53) -- (410.27,98.53) .. controls (416.48,98.53) and (421.5,103.56) .. (421.5,109.76) -- (421.5,143.45) .. controls (421.5,149.65) and (416.48,154.68) .. (410.27,154.68) -- (308.59,154.68) .. controls (302.39,154.68) and (297.36,149.65) .. (297.36,143.45) -- cycle ;

%Rounded Rect [id:dp6514528209760105] 
\draw  [color={rgb, 255:red, 255; green, 255; blue, 255 }  ,draw opacity=1 ][fill={rgb, 255:red, 255; green, 192; blue, 203 }  ,fill opacity=1 ] (428.62,11.8) .. controls (428.62,5.59) and (433.64,0.57) .. (439.85,0.57) -- (541.52,0.57) .. controls (547.73,0.57) and (552.75,5.59) .. (552.75,11.8) -- (552.75,45.49) .. controls (552.75,51.69) and (547.73,56.72) .. (541.52,56.72) -- (439.85,56.72) .. controls (433.64,56.72) and (428.62,51.69) .. (428.62,45.49) -- cycle ;
%Rounded Rect [id:dp5123602691575795] 
\draw  [color={rgb, 255:red, 255; green, 255; blue, 255 }  ,draw opacity=1 ][fill={rgb, 255:red, 255; green, 192; blue, 203 }  ,fill opacity=1 ] (572.5,110.52) .. controls (572.5,104.32) and (577.52,99.29) .. (583.73,99.29) -- (685.4,99.29) .. controls (691.61,99.29) and (696.63,104.32) .. (696.63,110.52) -- (696.63,144.21) .. controls (696.63,150.41) and (691.61,155.44) .. (685.4,155.44) -- (583.73,155.44) .. controls (577.52,155.44) and (572.5,150.41) .. (572.5,144.21) -- cycle ;

% Text Node
\draw (-0.77,114.61) node [anchor=north west][inner sep=0.75pt]    {$F_{1}^{1}$};
% Text Node
\draw (274.14,115.83) node [anchor=north west][inner sep=0.75pt]    {$F_{2}^{2}$};
% Text Node
\draw (125.06,21.1) node [anchor=north west][inner sep=0.75pt]    {$F_{1}^{2}$};
% Text Node
\draw (397.38,21.33) node [anchor=north west][inner sep=0.75pt]    {$F_{2}^{3}$};
% Text Node
\draw (547.63,116.12) node [anchor=north west][inner sep=0.75pt]    {$F_{3}^{3}$};
% Text Node
\draw (451.84,80.22) node [anchor=north west][inner sep=0.75pt]   [align=left] {$\displaystyle f_{2,3}^{2}$};
% Text Node
\draw (174.49,80.73) node [anchor=north west][inner sep=0.75pt]   [align=left] {$\displaystyle f_{1,2}^{1}$};
% Text Node
\draw (240.12,79.7) node [anchor=north west][inner sep=0.75pt]   [align=left] {$\displaystyle f_{1,2}^{2}$};
% Text Node
\draw (517.47,80.73) node [anchor=north west][inner sep=0.75pt]   [align=left] {$\displaystyle f_{2,3}^{3}$};
% Text Node
\draw (26.01,131.07) node [anchor=north west][inner sep=0.75pt]   [align=left] {$\displaystyle a_{1}$};
% Text Node
\draw (32.73,108.29) node [anchor=north west][inner sep=0.75pt]   [align=left] {$\displaystyle a_{2}$};
% Text Node
\draw (128.93,128.87) node [anchor=north west][inner sep=0.75pt]   [align=left] {$\displaystyle c$};
% Text Node
\draw (96.94,102.87) node [anchor=north west][inner sep=0.75pt]   [align=left] {$\displaystyle b$};
% Text Node
\draw (163.97,33.73) node [anchor=north west][inner sep=0.75pt]   [align=left] {$\displaystyle a_{1}$};
% Text Node
\draw (170.69,10.95) node [anchor=north west][inner sep=0.75pt]   [align=left] {$\displaystyle a_{2}$};
% Text Node
\draw (266.88,31.53) node [anchor=north west][inner sep=0.75pt]   [align=left] {$\displaystyle c$};
% Text Node
\draw (303.88,130.67) node [anchor=north west][inner sep=0.75pt]   [align=left] {$\displaystyle b'$};
% Text Node
\draw (310.56,107.9) node [anchor=north west][inner sep=0.75pt]   [align=left] {$\displaystyle a^{*}$};
% Text Node
\draw (406.76,128.47) node [anchor=north west][inner sep=0.75pt]   [align=left] {$\displaystyle c$};
% Text Node
\draw (435.13,32.71) node [anchor=north west][inner sep=0.75pt]   [align=left] {$\displaystyle b'$};
% Text Node
\draw (441.81,9.94) node [anchor=north west][inner sep=0.75pt]   [align=left] {$\displaystyle a^{*}$};
% Text Node
\draw (681.82,129.23) node [anchor=north west][inner sep=0.75pt]   [align=left] {$\displaystyle c'$};
% Text Node
\draw (585.69,108.66) node [anchor=north west][inner sep=0.75pt]   [align=left] {$\displaystyle a^{*}$};
% Text Node
\draw (579.01,131.43) node [anchor=north west][inner sep=0.75pt]   [align=left] {$\displaystyle b'$};

\end{tikzpicture}
}
\end{equation}

\end{note}

\subsection{Temporal Analogues of Static Properties}\label{sec:lifting-properties}

The theory of static data (be it graph theory, group theory, etc.) is far better understood than its temporal counterpart (temporal graphs, temporal groups, etc.). For this reason and since static properties are often easier to think of, it is natural to try to lift notions from the static setting to the temporal. 

This idea has been employed very often in temporal graph theory for instance with the notion of a \textit{temporal path}. In this section we will consider temporal paths and their definition in terms of graph narratives. This section is a case-study intended to motivate our more general approach in Section~\ref{sec:lifting-properties}.

\subsubsection{Temporal Paths} As we mentioned in Section~\ref{sec:desiderata}, one easy way of defining the notion of a temporal path in a temporal graph \(\Ga\) is to simply declare it to be a path in the underlying static graph of \(\Ga\). However, at first glance (and we will address this later on) this notion does not seem to be particularly `temporal' since it is forgetting entirely the various temporal relationships between edges and vertices. In contrast (using Kempe et. al.'s Definition~\ref{def:simple_definition_of_temporal_graph} of a temporal graph) temporal paths are usually defined as follows (we say that these notions are `\(\mathbf{(K3)}\)-temporal' to make it clear that they are defined in terms of Kempe, Kleinberg and Kumar's definition of a temporal graph). 

\begin{definition}[\(\mathbf{(K3)}\)-temporal paths and walks]\label{def:temporal-path}
Given vertices $x$ and $y$ in a temporal graph $(G, \tau)$, a \define{temporal $(x,y)$-walk} is a sequence $W = (e_1,t_1), \dots, (e_n,t_n)$ of edge-time pairs such that $e_1, \dots, e_n$ is a walk in $G$ starting at $x$ and ending at $y$ and such that \(e_i\) is active at time \(t_i\) and $t_1 \leq t_2 \leq \dots \leq t_n$. We say that a temporal $(x,y)$-walk is \define{closed} if $x = y$ and we say that it is \define{strict} if the times of the walk form a strictly increasing sequence.
\end{definition}

Using this definition, one also has the following natural decision problem on temporal graphs. 

\begin{framed}
    \noindent \(Temp_{K^3}Path_n\) \\
    \noindent \textbf{Input:} a \(\mathbf{(K3)}\)-temporal graph \(G := (V, (E_i)_{i\in\mathbb{N}})\) and an \(n \in \mathbb{N}\) \\
    \noindent \textbf{Task:} determine if there exists a \(\mathbf{(K3)}\)-temporal path of length at least \(n\) in \(G\).
\end{framed}

Notice that in static graph theory most computational problems can be cast as homomorphism problems in appropriate categories of graphs. For instance, the question of determining whether a fixed graph \(G\) admits a path of length at least \(n\) is equivalent to asking if there is at least one injective homomorphism \(P_n \hookrightarrow G\) from the \(n\)-path to \(G\). Similarly, if we wish to ask if \(G\) contains a clique on \(n\) vertices as a minor\footnote{Recall that a \define{contraction} of a graph \(G\) is a surjective graph homomorphism \(q \colon G \twoheadrightarrow G'\) such that every preimage of \(q\) is connected in \(G\) (equivalently \(G'\) is obtained from \(G\) by a sequence of edge contractions). A \define{minor} of a graph \(G\) is a subgraph \(H\) of a \define{contraction} \(G'\) of \(G\).}, then this is simply a homomorphism problem in the category \(\cat{Grph}_{\preceq}\) having graphs as objects and graph minors as morphisms: \(G\) contains \(K_n\) as a minor if and only if the hom-set \(\cat{Grph}_{\preceq}(K_n, G)\) is nonempty. 

Wishing to emulate this pattern from traditional graph theory, one immediately notices that, in order to define notions such as temporal paths, cliques and colorings (to name but a few), one first needs two things: 
\begin{enumerate}
    \item a notion of morphism of temporal graphs and 
    \item a way of lifting graph classes to classes of temporal graphs (for instance defining temporal path-graphs, temporal complete graphs, etc...). 
\end{enumerate}
Fortunately our narratives come equipped with a notion of morphism (these are simply natural transformations between the functors encoding the narratives). Thus, all that remains to be determined is how to convert classes of graphs into classes of temporal graphs. More generally we find ourselves interested in converting classes of objects of any category \(\cat{C}\) into classes of \(\cat{C}\)-narratives. We will address these questions in an even more general manner (Propositions~\ref{prop:change-of-base-covariant} and~\ref{prop:change-of-base-contravariant}) by developing a systematic way for converting \(\cat{C}\)-narratives into \(\cat{D}\)-narratives whenever we have certain kinds of data-conversion functors \(K \colon \cat{C} \to \cat{D}\). 

\begin{proposition}[Covariant Change of base]\label{prop:change-of-base-covariant}
Let \(\cat{C}\) and  \(\cat{D}\) be categories with limits (resp. colimits) and let \(\cat{T}\) be any time category. If \(K \colon \cat{C} \to \cat{D}\) is a continuous functor, then composition with \(K\) determines  a functor \((K \circ -)\) from persistent (resp. cumulative) \(\cat{C}\)-narratives to persistent (resp. cumulative) \(\cat{D}\)-narratives. Spelling this out explicitly for the case of persistent narratives, we have:
\begin{align*}
    (K \circ -) &\colon \cat{Pe}(\cat{T}, \cat{C}) \to \cat{Pe}(\cat{T}, \cat{D}) \\
    (K \circ -) &\colon (F \colon \cat{T}^{op} \to \cat{C}) \mapsto (K \circ F \colon \cat{T}^{op} \to \cat{D}).
\end{align*}
\end{proposition}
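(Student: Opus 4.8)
The plan is to exhibit \((K \circ -)\) as an honest functor in two stages: first its action on objects (narratives), then its action on morphisms (natural transformations), verifying in each case that the output lands in the target category of narratives. Essentially all of the content is concentrated in the object stage, and there it reduces to a single observation: continuity of \(K\) is precisely the hypothesis needed to transport the defining (co)limit condition of a narrative across \(K\).

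For the object action, fix a persistent \(\cat{C}\)-narrative \(F \colon \cat{T}^{op} \to \cat{C}\). The composite \(K \circ F \colon \cat{T}^{op} \to \cat{D}\) is automatically a presheaf, being a composite of functors, so the only thing to check is that it satisfies the sheaf condition of Proposition~\ref{prop:def:sheaves}. That condition demands that for every interval \([a,b]\) and every cover \(([a,p],[p,b])\), the object assigned to \([a,b]\) be the pullback of the cospan assigned to \([a,p] \to [p,p] \leftarrow [p,b]\). Since \(F\) is a sheaf, \(F([a,b])\) is exactly the pullback \(F([a,p]) \times_{F([p,p])} F([p,b])\). Applying the continuous functor \(K\) carries this limit cone to a limit cone in \(\cat{D}\), so \(K(F([a,b]))\) is the pullback \(K(F([a,p])) \times_{K(F([p,p]))} K(F([p,b]))\), which is the sheaf condition for \(K \circ F\). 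The cumulative case is formally dual: a cosheaf sends covers to pushouts, and one checks that \(K \circ \hat{F}\) is again a cosheaf by observing that \(K\) carries each pushout cocone of \(\hat{F}\) to a pushout cocone in \(\cat{D}\); here the hypothesis one actually uses is preservation of pushouts (cocontinuity) rather than continuity.

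For the morphism action, a morphism of persistent \(\cat{C}\)-narratives is by definition a natural transformation \(\alpha \colon F \Rightarrow G\). I would define \((K \circ -)(\alpha)\) to be the whiskered transformation \(K\alpha \colon K \circ F \Rightarrow K \circ G\) whose component at \([a,b]\) is \(K(\alpha_{[a,b]})\). Naturality of \(K\alpha\) follows by applying \(K\) to each naturality square of \(\alpha\) and invoking functoriality of \(K\). Preservation of identities and of composition by \((K \circ -)\) is then immediate from \(K(\id) = \id\) and \(K(g \circ f) = K(g) \circ K(f)\).

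I do not anticipate a genuine obstacle: once the statement is set up, the argument is largely bookkeeping. The one point deserving care—and the only place the hypotheses are truly used—is ensuring that \(K\) applied to the pullback cone is again a \emph{limiting} cone, not merely that \(K\) sends the apex to the right object up to isomorphism; this is exactly what continuity guarantees, and it is worth recording explicitly that the cumulative statement requires the dual hypothesis of pushout preservation.
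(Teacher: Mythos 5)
Your proof is correct and takes essentially the same approach as the paper's: the paper likewise treats the functoriality of \((K \circ -)\) on morphisms as standard and concentrates on the single substantive point, namely that continuity of \(K\) transports the pullback defining the sheaf condition of Proposition~\ref{prop:def:sheaves}, with the cumulative case handled by duality. Your explicit whiskering argument for morphisms and your remark that the cumulative case really uses preservation of pushouts (cocontinuity) merely spell out what the paper's terser ``it is standard'' and ``by duality'' leave implicit, so there is nothing to correct.
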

\begin{proof}
It is standard to show that \(K \circ F\) is a functor of presheaf categories, so all that remains is to show that it maps any \(\cat{C}\)-narrative \(F \colon \cat{T}^{op} \to \cat{C}\) to an appropriate sheaf. This follows immediately since \(K\) preserves limits: for any cover \(([a,p], [p, b])\) of any interval \([a,b]\) we have \((K \circ F)([a,b])) = K(F([a,p]) \times_{F([p,p])} F([p,b])) = (K \circ F)([a,p]) \times_{(K \circ F)([p,p])} (K \circ F)([p,b])).\) By duality the case of cumulative narratives follows.
\end{proof}

Notice that one also has change of base functors for any contravariant functor \(L \colon \cat{C}^{op} \to \cat{D}\) taking limits in \(\cat{C}\) to colimits in \(\cat{D}\). This yields the following result (which can be proven in the same way as Proposition~\ref{prop:change-of-base-covariant}).

\begin{proposition}[Contravariant Change of base]\label{prop:change-of-base-contravariant}
Let \(\cat{C}\) be a category with limits (resp. colimits) and \(\cat{D}\) be a category with colimits (resp. limits) and let \(\cat{T}\) be any time category. If \(K \colon \cat{C}^{op} \to \cat{D}\) is a functor taking limits to colimits (resp. colimits to limits), then the composition with \(K\) determines  a functor from persistent (resp. cumulative) \(\cat{C}\)-narratives to cumulative (resp. persistent) \(\cat{D}\)-narratives. 
\end{proposition}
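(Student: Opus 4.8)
The plan is to mimic the proof of Proposition~\ref{prop:change-of-base-covariant} almost verbatim, inserting a single application of the opposite-category construction to repair the variance; I will treat the persistent-to-cumulative direction in detail and obtain the other by duality. Start with a persistent $\cat{C}$-narrative $F \colon \cat{T}^{op} \to \cat{C}$ (a sheaf). Since the codomain of $F$ is $\cat{C}$ while the domain of $K$ is $\cat{C}^{op}$, the two do not compose on the nose; instead I pass to $F^{op} \colon \cat{T} \to \cat{C}^{op}$ and form $\mathscr{L}(F) := K \circ F^{op} \colon \cat{T} \to \cat{D}$, which now has exactly the variance of a copresheaf on $\cat{T}$ and is therefore a candidate cumulative $\cat{D}$-narrative. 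On a morphism $\alpha \colon F \Rightarrow F'$ of narratives one sets $\mathscr{L}(\alpha) := K\alpha^{op}$; because passing to opposites reverses the direction of natural transformations, $\mathscr{L}$ is in fact \emph{contravariant} on the narrative categories (equivalently, a covariant functor out of $\cat{Pe}(\cat{T},\cat{C})^{op}$). That $\mathscr{L}$ preserves identities and respects composition is the same whiskering argument used in the covariant case.

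The substance of the argument is verifying that $\mathscr{L}(F)$ satisfies the cosheaf (pushout) condition of Proposition~\ref{prop:def:sheaves}. Fix an interval $[a,b]$ and a cover $([a,p],[p,b])$. Because $F$ is a sheaf, $F([a,b])$ is the pullback $F([a,p]) \times_{F([p,p])} F([p,b])$, i.e. the limit in $\cat{C}$ of the cospan $F([a,p]) \to F([p,p]) \leftarrow F([p,b])$. Since $K$ is contravariant (its domain being $\cat{C}^{op}$) it reverses these two arrows, producing the span $K(F([a,p])) \leftarrow K(F([p,p])) \to K(F([p,b]))$ in $\cat{D}$; and since by hypothesis $K$ carries limits in $\cat{C}$ to colimits in $\cat{D}$, the object $K(F([a,b]))$ is the colimit of this span, namely the pushout $K(F([a,p])) +_{K(F([p,p]))} K(F([p,b]))$. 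Unwinding the definition $\mathscr{L} = K \circ (-)^{op}$, this is precisely the equation $\mathscr{L}(F)([a,b]) = \mathscr{L}(F)([a,p]) +_{\mathscr{L}(F)([p,p])} \mathscr{L}(F)([p,b])$, so $\mathscr{L}(F)$ is a cumulative $\cat{D}$-narrative.

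The cumulative-to-persistent statement then follows by the evident dualization: given a cumulative $\cat{C}$-narrative $\hat{F} \colon \cat{T} \to \cat{C}$ (a cosheaf) and $K$ taking colimits to limits, the composite $K \circ \hat{F}^{op} \colon \cat{T}^{op} \to \cat{D}$ sends each pushout $\hat{F}([a,b]) = \hat{F}([a,p]) +_{\hat{F}([p,p])} \hat{F}([p,b])$ to the corresponding pullback in $\cat{D}$, hence is a sheaf. I expect the only genuine obstacle to be the bookkeeping of variance: one must check carefully that precomposing with the interval inclusions, dualizing via $(-)^{op}$, and invoking the limit-to-colimit property of $K$ all compose so as to convert the pullback sheaf condition into the pushout cosheaf condition (and, as noted, that the induced operation reverses narrative morphisms). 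Beyond this tracking of opposites, the categorical content is identical to the continuous case already settled in Proposition~\ref{prop:change-of-base-covariant}.
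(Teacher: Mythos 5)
Your proof is correct and matches the paper's approach exactly: the paper gives no separate argument for this proposition, remarking only that it ``can be proven in the same way as Proposition~\ref{prop:change-of-base-covariant}'', and your writeup is precisely that dualization carried out in full---post-composition with \(K\) after passing to opposites, with the limits-to-colimits hypothesis converting the pullback sheaf condition on each cover \(([a,p],[p,b])\) into the pushout cosheaf condition. Your added observation that the operation reverses narrative morphisms, so that it is most honestly read as a functor \(\cat{Pe}(\cat{T},\cat{C})^{op} \to \cat{Cu}(\cat{T},\cat{D})\), is a correct precision of a variance point the paper's statement leaves implicit.
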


To see how these change of base functors are relevant to lifting classes of objects in any category \(\cat{C}\) to corresponding classes of \(\cat{C}\)-narratives, observe that any such class \(\cat{P}\) of objects in \(\cat{C}\) can be identified with a subcategory \(P \colon \cat{P} \to \cat{C}\). One should think of this as a functor which picks out those objects of \(\cat{C}\) that satisfy a given property \(P\). Now, if this functor \(P\) is continuous, then we can apply Proposition~\ref{prop:change-of-base-covariant} to identify a class
\begin{equation}\label{eqn:change-of-base}
    (P \circ - ) \colon \cat{Pe}(\cat{T}, \cat{P}) \to \cat{Pe}(\cat{T}, \cat{C})
\end{equation} of \(\cat{C}\)-narratives which satisfy the property \(P\) at all times. Similar arguments let us determine how to specify temporal analogues of properties under the cumulative perspective. For example, consider the full subcategory \(\mathfrak{P} \colon \cat{Paths} \hookrightarrow \cat{Grph}\) which defines the category of all paths and the morphisms between them. As the following proposition shows, the functor \(\mathfrak{P}\) determines a subcategory \(\cat{Cu}(\cat{T}, \cat{Paths}) \hookrightarrow \cat{Cu}(\cat{T}, \cat{Grph})\) whose objects are temporal path-graphs. 

\begin{proposition}\label{prop:def:temporal-path}
The monic cosheaves in \(\cat{Cu}(\cat{T}, \cat{Paths})\) determine temporal graphs (in the sense of Definition~\ref{def:simple_definition_of_temporal_graph}) whose underlying static graph over any interval of time is a path. Furthermore, for any graph narrative \(\Ga \in \cat{Cu}(\cat{T}, \cat{Grph})\) all of the temporal paths in \(\Ga\) assemble into a poset \(\mathsf{Sub}_{(\mathfrak{P} \circ -)}(\Ga)\) defined as the subcategory of the subobject category \(\mathsf{Sub}(\Ga)\) whose objects are in the range of \((\mathfrak{P} \circ -)\). Finally, strict temporal paths in a graph narrative \(\Ga\) consists of all those monomorphism \(\mathfrak{P}(\Pa) \hookrightarrow \Ga\) where the path narrative \(\Pa\) in \(\mathsf{Sub}_{(\mathfrak{P} \circ -)}(\Ga)\) sends each instantaneous interval (i.e. one of the form \([t,t]\)) to a single-edge path.
\end{proposition}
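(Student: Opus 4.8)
The plan is to treat the statement as three separate assertions, each proved by unwinding definitions and matching the sheaf-theoretic data against the classical Definitions~\ref{def:simple_definition_of_temporal_graph} and~\ref{def:temporal-path}; the substance lies in this translation rather than in any hard construction. Throughout I read $\cat{Cu}(\cat{T},\cat{Paths})$ as the full subcategory of $\cat{Cu}(\cat{T},\cat{Grph})$ cut out by post-composition with $\mathfrak{P}\colon\cat{Paths}\hookrightarrow\cat{Grph}$, exactly as in the discussion preceding the statement, and I use Proposition~\ref{prop:specifying-a-discrete-narrative} to reduce each discrete narrative to its values on intervals of length zero and one.

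For the first assertion I would begin with a monic cosheaf $\hat{F}\in\cat{Cu}(\cat{T},\cat{Paths})$ and read off the cosheaf condition of Proposition~\ref{prop:def:sheaves}: on every cover $([a,p],[p,b])$ the value $\hat{F}([a,b])$ is the pushout $\hat{F}([a,p])+_{\hat{F}([p,p])}\hat{F}([p,b])$. Since $\hat{F}$ is valued in $\cat{Paths}$, every such interval value is a path, which is precisely the claim that the underlying static graph over each interval is a path; monicity says each corestriction is an injective homomorphism, so vertices and edges only accumulate and none is ever duplicated, which is what places $\hat{F}$ among Kempe-style temporal graphs under the correspondence recorded after Definition~\ref{def:simple_definition_of_temporal_graph} (taking the ambient vertex set to be that of the colimit of $\hat{F}$ over all of $\cat{T}$). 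The one point needing care is that the defining pushout is nominally computed in $\cat{Paths}$ whereas membership in $\cat{Cu}(\cat{T},\cat{Grph})$ asks for the pushout in $\cat{Grph}$; I would settle this by noting that a pushout of monic inclusions of paths that is again a path is necessarily a concatenation along a shared sub-path, and such concatenations are computed identically by the full inclusion $\mathfrak{P}$, so the two cosheaf conditions agree on monic path-valued data.

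The second and third assertions I would argue formally. Subobjects of a fixed $\Ga$ form a poset $\mathsf{Sub}(\Ga)$ under the factorization order on monomorphisms, so it suffices to see that the temporal paths of $\Ga$ are exactly a sub-collection of it: a temporal path is by definition a mono $\mathfrak{P}(\Pa)\hookrightarrow\Ga$ with $\Pa\in\cat{Cu}(\cat{T},\cat{Paths})$, that is, a subobject of $\Ga$ whose domain lies in the range of $(\mathfrak{P}\circ-)$; restricting $\mathsf{Sub}(\Ga)$ to these objects yields the full sub-poset $\mathsf{Sub}_{(\mathfrak{P}\circ-)}(\Ga)$, which inherits the order and is again a poset. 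For strictness I would unwind Definition~\ref{def:temporal-path}: a strict walk has activation times $t_1<\dots<t_n$, so at any single instant $[t,t]$ at most one edge is traversed, whereas the non-strict case allows several; translating through Proposition~\ref{prop:specifying-a-discrete-narrative}, the snapshot $\Pa([t,t])$ records exactly the edges active at $t$, so strictness is equivalent to $\Pa([t,t])$ being a single-edge path for every $t$, with the length-one intervals concatenating consecutive edges into the ambient path.

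I expect the first assertion to be the main obstacle, and specifically the reconciliation of the pushout taken in $\cat{Paths}$ with the one taken in $\cat{Grph}$, together with the bookkeeping that turns a monic path-valued cosheaf into an honest temporal graph with a fixed vertex set; this is where the categorical narrative and the classical combinatorial object must be made to coincide. Once that correspondence is in place, the poset structure is purely formal and the strict-versus-non-strict dichotomy is a direct comparison with Definition~\ref{def:temporal-path}, the only residual check being that passing to subobjects preserves the single-edge-per-instant condition that characterizes strictness.
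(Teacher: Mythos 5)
Your proposal is correct in substance and follows the same three-part decomposition as the paper: the poset assertion is handled formally via componentwise-monic natural transformations into \(\mathsf{Sub}(\Ga)\), and strictness is read off from single-edge instantaneous snapshots, exactly as in the paper's proof. Where you genuinely diverge is on the one step you yourself flag as the obstacle: reconciling the pushout computed in \(\cat{Paths}\) with the one computed in \(\cat{Grph}\). The paper settles this abstractly, by invoking adhesivity of copresheaf categories (so pushouts preserve monomorphisms, citing Lack) and deducing that pushouts in \(\cat{Paths}\), \emph{when they exist}, are computed as in \(\cat{Grph}\); you instead argue combinatorially that a pushout of monic inclusions of paths which is a path must be a concatenation along a shared sub-path, which the full inclusion \(\mathfrak{P}\) computes identically. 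Each route buys something: the adhesivity argument is citation-driven and transfers wholesale to the other full subcategories the paper later uses (e.g. \(\cat{Trees_{mono}}\) in the cumulative-versus-persistent comparison), whereas your concatenation argument is elementary and self-contained but path-specific. Two cautions if you flesh yours out. First, the danger is not pushouts that happen to be paths (a \(\cat{Paths}\)-pushout is a path by fiat) but ``phantom'' pushouts: a full inclusion reflects colimits yet need not preserve them, so you must exclude a path \(Q\) that is universal among paths while differing from the \(\cat{Grph}\)-gluing. This is provable — joint non-surjectivity of the legs lets you fold an end of \(Q\) to violate uniqueness of mediating maps, and over-identification is killed by testing against the ambient pushout when it is a path — but it is exactly the content hidden in your word ``necessarily,'' and it genuinely uses the monicity of the cosheaf's corestrictions, which you correctly restrict to. Second, your appeal to Proposition~\ref{prop:specifying-a-discrete-narrative} and to the colimit over all of \(\cat{T}\) as ambient vertex set is harmless extra scaffolding (the first assertion is implicitly discrete since Definition~\ref{def:simple_definition_of_temporal_graph} is), and your reading of strictness agrees with the paper's, modulo its slightly laxer ``at most one edge'' phrasing.
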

\begin{proof}
Since categories of copresheaves are adhesive~\cite{LackAdhesive} (thus their pushouts preserve monomorphims), one can verify that, \textit{when they exists} (pushouts of paths need not be paths in general), pushouts in \(\cat{Paths}\) are given by computing pushouts in \(\cat{Grph}\). Thus a monic cosheaf \(\Pa\) in \(\cat{Cu}(\cat{T}, \cat{Paths})\) is necessarily determined by paths for each interval of time that combine (by pushout) into paths at longer intervals, as desired. Finally, by noticing that monomorphisms of (co)sheaves are simply natural transformations whose components are all monic, one can verify that any monomorphism from \(\mathfrak{P}(\Pa)\) to \(\Ga\) in the category of graph narratives determines a temporal path of \(\Ga\) and that this temporal path is strict if \(\Pa([t,t])\) is a path on at most one edge for all \(t \in \cat{T}\). Finally, as is standard in category theory~\cite{awodey2010category}, observe that one can collect all such monomorphisms (varying \(\Pa\) over all objects of \(\cat{Cu}(\cat{T}, \cat{Paths})\)) into a subposet of the subobject poset of \(\Ga\), which, by our preceding observation, determines all of the temporal paths in \(\Ga\).  
\end{proof}

\paragraph{Comparing the Cumulative to the Persistent.} Given Proposition~\ref{prop:def:temporal-path} one might wonder what a temporal path looks like under the \textit{persistent} perspective. By duality (and since pullbacks preserve monomorphisms and connected subgraphs of paths are paths) one can see that \textit{monic} persistent path narratives must consist of paths at each snapshot satisfying the property that over any interval the data persisting over that interval is itself a path. 

Since applying the functor \(\mathscr{P} \colon \cat{Cu}(\cat{T}, \cat{Paths}) \to \cat{Pe}(\cat{T}, \cat{Paths})\) of Theorem~\ref{thm:adjunction} turns any cumulative path narrative into a persistent one, it seem at first glance that there is not much distinction between persistent temporal paths and those defined cumulatively in Proposition~\ref{prop:def:temporal-path}. However, the distinction becomes apparent once one realises that in general we cannot simply turn a persistent path narrative into a cumulative one: in general arbitrary pushouts of paths need not be paths (they can give rise to trees). 

Realizing the distinctions between cumulative and persistent paths is a pedagogical example of a subtlety that our systematic approach to the study of temporal data can uncover but that would otherwise easily go unnoticed: in short, this amounts to the fact that studying the problem of the temporal tree (defined below) is equivalent to studying the persistent temporal path problem.

To make this idea precise, consider the adjunction 
% https://q.uiver.app/#q=WzAsMixbMCwwLCJcXG1hdGhzZntQZX0oXFxjYXR7VH0sIFxcbWF0aHNme0dycGhfe21vbm99fSkiXSxbMiwwLCJcXG1hdGhzZntDdX0oXFxjYXR7VH0sIFxcbWF0aHNme0dycGhfe21vbm99fSkiXSxbMCwxLCJ7XFxtYXRoc2Nye0t9fSIsMCx7ImN1cnZlIjotMn1dLFsxLDAsIntcXG1hdGhzY3J7UH19IiwwLHsiY3VydmUiOi0yfV0sWzIsMywiIiwyLHsibGV2ZWwiOjEsInN0eWxlIjp7Im5hbWUiOiJhZGp1bmN0aW9uIiwiYm9keSI6eyJuYW1lIjoibm9uZSJ9LCJoZWFkIjp7Im5hbWUiOiJub25lIn19fV1d
\[\begin{tikzcd}
	{\mathsf{Pe}(\cat{T}, \mathsf{Grph_{mono}})} && {\mathsf{Cu}(\cat{T}, \mathsf{Grph_{mono}})}
	\arrow[""{name=0, anchor=center, inner sep=0}, "{{\mathscr{K}}}", curve={height=-12pt}, from=1-1, to=1-3]
	\arrow[""{name=1, anchor=center, inner sep=0}, "{{\mathscr{P}}}", curve={height=-12pt}, from=1-3, to=1-1]
	\arrow["\dashv"{anchor=center, rotate=-90}, draw=none, from=0, to=1]
\end{tikzcd}\]

given to us by Theorem~\ref{thm:adjunction} (notice that the result applies since \(\cat{Grph}\) has all limits and colimits). This together with Proposition~\ref{prop:change-of-base-covariant} applied to the full subcategory \(\mathfrak{T} \colon \cat{Trees_{mono}} \to \cat{Grph_{mono}}\) yields the following diagram.  
% https://q.uiver.app/#q=WzAsNCxbMCwwLCJcXG1hdGhzZntDdX0oXFxtYXRoc2Z7VH0sIFxcbWF0aHNme1RyZWVzfV97bW9ub30pIl0sWzIsMiwiXFxtYXRoc2Z7UGV9KFxcbWF0aHNme1R9LCBcXG1hdGhzZntHcnBofV97bW9ub30pIl0sWzAsMiwiXFxtYXRoc2Z7UGV9KFxcbWF0aHNme1R9LCBcXG1hdGhzZntQYXRoc31fe21vbm99KSJdLFsyLDAsIlxcbWF0aHNme0N1fShcXG1hdGhzZntUfSwgXFxtYXRoc2Z7R3JwaH1fe21vbm99KSJdLFsyLDEsIihcXG1hdGhmcmFre1B9IFxcY2lyYyAtKSJdLFsxLDMsIlxcbWF0aHNjcntLfSJdLFswLDMsIihcXG1hdGhmcmFre1R9IFxcY2lyYyAtKSJdXQ==
\[\begin{tikzcd}
	{\mathsf{Cu}(\mathsf{T}, \mathsf{Trees_{mono}})} && {\mathsf{Cu}(\mathsf{T}, \cat{Grph_{mono}})} \\
	\\
	{\mathsf{Pe}(\mathsf{T}, \cat{Paths_{mono}})} && {\mathsf{Pe}(\mathsf{T}, \cat{Grph_{mono}})}
	\arrow["{(\mathfrak{P} \circ -)}", from=3-1, to=3-3]
	\arrow["{\mathscr{K}}", from=3-3, to=1-3]
	\arrow["{(\mathfrak{T} \circ -)}", from=1-1, to=1-3]
\end{tikzcd}\]
The pullback (in \(\cat{Cat}\)) of this diagram (i.e. of $(\mathfrak{T} \circ -)$ and $ \mathscr{K} \circ (\mathfrak{P} \circ -)$) yields a category having as objects pairs \((\Ta, \Pa)\) consisting of a cumulative tree narrative \(\Ta\) and a persistent path narrative \(\Pa\) such that, when both are viewed as cumulative \(\cat{Grph_{mono}}\)-narratives, they give rise to the \textit{same} narrative. %Since the adjunction of Theorem~\ref{thm:adjunction} restricts to an \textit{equivalence} of categories, we have the 
Thus we see that the question of determining whether a cumulative graph narrative \(\Ga\) contains \(\mathfrak{T}(\Ta)\) as a sub-narrative can be reduced to the question of determining whether \(\Pa\) is a persistent path sub-narrative of \(\mathscr{P}(\Ga)\). 

\begin{aside}
Although it is far beyond the scope of this paper, we believe that there is a wealth of understanding of temporal data (and in particular temporal graphs) to be gained from the interplay of lifting graph properties and the persistent-cumulative adjunction of Theorem~\ref{thm:adjunction}. For instance the preceding discussion shows that one can equivalently study persistent paths instead of thinking about cumulative temporal trees. Since persistent paths are arguably easier to think about (because paths are fundamentally simpler objects than trees) it would stand to reason that this hidden connection between these classes of narratives could aid in making new observations that have so far been missed.
\end{aside}
 
\subsubsection{Changing the Resolution of Temporal Analogues.}

As we have done so far, imagine collecting data over time from some hidden dynamical system and suppose, after some exploratory analysis of our data, that we notice the emergence of some properties in our data that are only visible at a certain temporal resolution. For example it might be that some property of interest is only visible if we accumulate all of the data we collected over time intervals whose duration is at least ten seconds. 

In contrast notice that the temporal notions obtained solely by `change of base' (i.e. via functors such as \eqref{eqn:change-of-base}) are very strict: not only do they require each instantaneous snapshot to satisfy the given property \(P\), they also require the property to be satisfied by any data that persists (or, depending on the perspective, accumulates) over time. For instance the category of temporal paths of Proposition~\ref{prop:def:temporal-path} consists of graph narratives that are paths at \textit{all} intervals. In this section we will instead give a general, more permissive definition of temporal analogues or static notions. This definition will account for the fact that one is often only interested in properties that emerge at certain temporal resolutions, but not necessarily others. 

To achieve this, we will briefly explain how to functorially change the temporal resolution of our narratives (Proposition~\ref{prop:change-temp-resolution}). Then, combining this with our change of base functors (Propositions~\ref{prop:change-of-base-covariant} and~\ref{prop:change-of-base-contravariant}) we will give an extremely general definition of a temporal analogue of a static property. The fact that this definition is parametric in the temporal resolution combined with the adjunction that relates cumulative and persistent narratives  (Theorem~\ref{thm:adjunction}) leads to a luscious landscape of temporal notions whose richness can be systematically studied via our category-theoretic perspective.

\begin{proposition}[Change of Temporal Resolution]\label{prop:change-temp-resolution}
Let \(\cat{T}\) be a time category and \(\cat{S} \xhookrightarrow{\tau} \cat{T}\) be a sub-join-semilattice thereof. Then, for any category \(\cat{C}\) with (co)limits, there is a functor \((- \circ \tau)\) taking persistent (resp. cumulative) \(\cat{C}\) narratives with time \(\cat{T}\) to narratives of the same kind with time \(S\).
\end{proposition}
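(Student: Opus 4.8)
The plan is to define $(-\circ\tau)$ as \emph{restriction along $\tau$}: on objects it sends a persistent narrative $F\colon\cat{T}^{op}\to\cat{C}$ (a sheaf) to the composite $F\circ\tau^{op}\colon\cat{S}^{op}\to\cat{C}$, and dually it sends a cumulative narrative $\hat F\colon\cat{T}\to\cat{C}$ (a cosheaf) to $\hat F\circ\tau$. On morphisms — which are just natural transformations between the functors encoding narratives — it acts by whiskering with $\tau$, sending $\alpha\colon F\Rightarrow G$ to $\alpha\tau\colon F\circ\tau^{op}\Rightarrow G\circ\tau^{op}$. Functoriality (preservation of identities and composites) is then immediate from the standard bookkeeping of whiskering, so the entire content of the proposition reduces to a single point: that restriction along $\tau$ actually lands in narratives on $\cat{S}$, i.e. that $F\circ\tau^{op}$ again satisfies the sheaf condition (resp. $\hat F\circ\tau$ the cosheaf condition).

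First I would record that $\cat{S}$ is itself a time category. Since $\cat{S}$ is a sub-join-semilattice of $\cat{T}$, and $\cat{T}$ is in turn a sub-join-semilattice of $\cat{I}$ or $\cat{I}_\mathbb{N}$, the category $\cat{S}$ is a sub-join-semilattice of $\cat{I}$ or $\cat{I}_\mathbb{N}$; hence by Lemma~\ref{lemma:time-category-sites} it carries the Johnstone coverage and $\cat{Pe}(\cat{S},\cat{C})$ and $\cat{Cu}(\cat{S},\cat{C})$ are defined. The crux is then the comparison of coverages under $\tau$. By Proposition~\ref{prop:def:sheaves} it suffices to test the sheaf condition against the \emph{generating} covers of the Johnstone coverage, namely the binary partitions. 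So let $[a,b]$ be an object of $\cat{S}$ and let $([a,p],[p,b])$ be such a cover of it \emph{in $\cat{S}$}; by definition this means the two subintervals $[a,p]$ and $[p,b]$ are themselves objects of $\cat{S}$. Since $\tau$ is an inclusion, these are also objects of $\cat{T}$, and the pair $([a,p],[p,b])$ is therefore \emph{verbatim} a binary Johnstone cover of $[a,b]$ in $\cat{T}$.

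Consequently, the pullback square (resp. pushout square) witnessing the sheaf (resp. cosheaf) condition for $F$ (resp. $\hat F$) at this cover in $\cat{T}$ is literally the very square that $F\circ\tau^{op}$ (resp. $\hat F\circ\tau$) is required to produce at the corresponding cover in $\cat{S}$ — no limit or colimit needs to be recomputed, because all four intervals involved lie in $\cat{S}$ and $\tau$ acts as the identity on them. Hence the condition transfers for free, and for those intervals of $\cat{S}$ admitting no nontrivial cover the condition is vacuous. I expect the only genuine subtlety — and the one place the sub-join-semilattice hypothesis does real work — to be exactly this coverage comparison: one must observe that covers of $\cat{S}$ are \emph{inherited} from $\cat{T}$ rather than newly created, so that $\tau$ is a morphism of sites along which restriction preserves sheaves. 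The dual cumulative statement then follows by replacing ``pullback'' with ``pushout'' throughout, using the cosheaf half of Proposition~\ref{prop:def:sheaves}.
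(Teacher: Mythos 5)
Your construction is the same as the paper's: there, the entire proof is a one-liner defining \((- \circ \tau)\) by precomposition with \(\tau\) and asserting functoriality ``by standard arguments'', exactly as in your first paragraph. Where you go beyond the paper is in actually verifying that restriction along \(\tau\) preserves the (co)sheaf condition, and it is precisely there that your argument contains an unjustified step. You assert that for a binary cover \(([a,p],[p,b])\) of an interval of \(\cat{S}\), ``all four intervals involved lie in \(\cat{S}\)''. Three of them do, but the fourth---the overlap \([p,p]\) over which the pullback \(F([a,p]) \times_{F([p,p])} F([p,b])\) of Proposition~\ref{prop:def:sheaves} is taken---need not: a sub-join-semilattice is closed under joins, not meets, so it can contain \([a,p]\) and \([p,b]\) without containing \([p,p]\). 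This is not a hypothetical corner case: the paper's own Proposition~\ref{prop:temporal-cliques} uses the sub-join-semilattice of intervals of length at least \(n\), which contains no singleton intervals at all. Your parenthetical claim that join-closure is what makes covers ``inherited'' gets the role of the hypothesis backwards; what join-closure actually buys is that \(\cat{S}\) is again a time category, so that Lemma~\ref{lemma:time-category-sites} applies and \(\cat{Pe}(\cat{S},\cat{C})\) and \(\cat{Cu}(\cat{S},\cat{C})\) are defined.

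The gap has real consequences under the reading of the Johnstone coverage your proof adopts (``any partition whose two pieces lie in \(\cat{S}\) is a cover''). When \([p,p] \notin \cat{S}\), the two pieces admit no common refinement in \(\cat{S}\), the sheaf condition at that cover degenerates to requiring \(F([a,b]) \cong F([a,p]) \times F([p,b])\) (a product, not a pullback), and restriction of a \(\cat{T}\)-sheaf genuinely fails it: take \(\cat{S} = \{[0,1],[1,2],[0,2]\} \subseteq \cat{I}_{\mathbb{N}}\) and any \(\cat{Set}\)-narrative for which the pullback over \(F([1,1])\) is a proper subobject of the product (e.g.\ identity restriction maps on a two-element set give a two-element pullback versus a four-element product). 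The repair is to pin down the coverage on \(\cat{S}\) so that a partition counts as a cover only when all its constituents, including \([p,p]\), lie in \(\cat{S}\)---a convention that is in any case forced by the pullback formula of Proposition~\ref{prop:def:sheaves}, which does not even typecheck when \(F([p,p])\) is undefined. Under that convention your verbatim-transfer argument is complete and correct, and supplies detail the paper leaves implicit; without it, the claimed inheritance of covers is exactly where the proof breaks.
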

\begin{proof}
By standard arguments the functor is defined by post composition as \[
    (- \circ \tau) \colon \cat{Cu}(\cat{T}, \cat{C}) \to \cat{Cu}(\cat{S}, \cat{C}) 
    \quad \text{ where } \quad  
    (- \circ \tau) \colon \bigl( \Fa \colon \cat{T} \to \cat{C}\bigr) \mapsto \bigl( \Fa \circ \tau \colon \cat{S} \to \cat{C} \bigr).
\]
The persistent case is defined in the same way. 
\end{proof}

Thus, given a sub-join-semilattice \(\tau\colon S\hookrightarrow \cat{T}\) of some time-category \(\cat{T}\), we would like to specify the collection of objects of a category of narratives that satisfy some given property \(P\) only over the intervals in \(S\). A slick way of defining this is via a pullback of functors as in the following definition. 

\begin{definition}\label{def:lifting-properties}
Let \(\tau\colon \cat{S} \hookrightarrow \cat{T}\) be a sub-join-semilattice of a time category \(\cat{T}\) let \(\cat{C}\) be a category with limits and let \(P \colon \cat{P} \hookrightarrow \cat{C}\) be a continuous functor. Then we say that a persistent \(\cat{C}\)-narrative with time \(\cat{T}\) \define{\(\tau\)-satisfies the property \(P\)} if it is in the image of the pullback (i.e. the red, dashed functor in the following diagram) of \((-\circ \tau)\) along \((P \circ - \circ \tau)\). An analogous definition also holds for cumulative narratives when \(\cat{C}\) has colimits and \(\cat{P}\) is continuous.
% https://q.uiver.app/#q=WzAsNSxbMCwwLCJcXG1hdGhzZntQZX0oXFxtYXRoc2Z7VH0sIFxcbWF0aHNme1B9KSJdLFsyLDAsIlxcbWF0aHNme1BlfShcXG1hdGhzZntTfSwgXFxtYXRoc2Z7UH0pIl0sWzQsMCwiXFxtYXRoc2Z7UGV9KFxcbWF0aHNme1N9LCBcXG1hdGhzZntDfSkiXSxbNCwxLCJcXG1hdGhzZntQZX0oXFxtYXRoc2Z7VH0sIFxcbWF0aHNme0N9KSJdLFswLDEsIlxcbWF0aHNme1BlfShcXG1hdGhzZntUfSwgXFxtYXRoc2Z7Q30pIFxcdGltZXNfe1xcbWF0aHNme1BlfShcXG1hdGhzZntTfSwgXFxtYXRoc2Z7UH0pfSBcXG1hdGhzZntQZX0oXFxtYXRoc2Z7VH0sIFxcbWF0aHNme1B9KSJdLFsxLDIsIihQXFxjaXJjIC0pIl0sWzAsMSwiKC1cXGNpcmNcXHRhdSkiXSxbMywyLCIoLVxcY2lyY1xcdGF1KSIsMl0sWzQsMywiIiwwLHsiY29sb3VyIjpbMCw2MCw2MF0sInN0eWxlIjp7ImJvZHkiOnsibmFtZSI6ImRhc2hlZCJ9fX1dLFs0LDBdLFs0LDYsIiIsMix7ImxldmVsIjoxLCJzdHlsZSI6eyJuYW1lIjoiY29ybmVyIn19XV0=
\[\begin{tikzcd}[row sep=small]
	{\mathsf{Pe}(\mathsf{T}, \mathsf{P})} && {\mathsf{Pe}(\mathsf{S}, \mathsf{P})} && {\mathsf{Pe}(\mathsf{S}, \mathsf{C})} \\
	{\mathsf{Pe}(\mathsf{T}, \mathsf{C}) \times_{\mathsf{Pe}(\mathsf{S}, \mathsf{P})} \mathsf{Pe}(\mathsf{T}, \mathsf{P})} &&&& {\mathsf{Pe}(\mathsf{T}, \mathsf{C})}
	\arrow["{(P\circ -)}", from=1-3, to=1-5]
	\arrow[""{name=0, anchor=center, inner sep=0}, "{(-\circ\tau)}", from=1-1, to=1-3]
	\arrow["{(-\circ\tau)}"', from=2-5, to=1-5]
	\arrow[draw={rgb,255:red,214;green,92;blue,92}, dashed, from=2-1, to=2-5]
	\arrow[from=2-1, to=1-1]
	\arrow["\lrcorner"{anchor=center, pos=0.125, rotate=90}, draw=none, from=2-1, to=0]
\end{tikzcd}\]
\end{definition}

As a proof of concept, we shall see how Definition~\ref{def:lifting-properties} can be used to recover notions of temporal cliques as introduced by Viard, Latapy and Magnien~\cite{viard2016computing}. 

Temporal cliques were thought of as models of groups of people that commonly interact with each other within temporal contact networks. Given the apparent usefulness of this notion in epidemiological modeling and since the task of finding temporal cliques is algorithmically challenging, this notion has received considerable attention recently~\cite{hermelin2023temporal, banerjee2019enumeration, bentert2019listing, himmel2017adapting, molter2021isolation, viard2014identifying}. They are typically defined in terms of Kempe, Kleinberg and Kumar's definition of a temporal graph (Definition~\ref{def:simple_definition_of_temporal_graph}) (or equivalently in terms of \textit{link streams}) where one declares a temporal clique to be a vertex subset \(S\) of the time-invariant vertex set such that, cumulatively, over any interval of length at least some given \(k\), \(S\) induces a clique. The formal definition follows. 
\begin{definition}[\cite{viard2016computing}]\label{def:temporal-clique}
Given a \(\mathbf{(K3)}\)-temporal graph \(G := (V, (E_i)_{i\in\mathbb{N}})\) and an \(n \in \mathbb{N}\), a subset \(S\) of \(V\) is said to be a \define{temporal \(k\) clique} if \(|S|\geq k\) and if for all intervals \([a, b]\) of length \(n\) in \(\mathbb{N}\) (i.e. \(b = a + n -1\)) one has that: for all \(x,y\in S\) there is an edge incident with both \(x\) and \(y\) in \(\bigcup_{t \in [a, b]} E_t\).
\end{definition}

Now we will see how we can obtain the above definition as an instance of our general construction of Definition~\ref{def:lifting-properties}. We should note that the following proposition is far more than simply recasting a known definition into more general language. Rather, it is about simultaneously achieving two goals at once. 
\begin{enumerate}
    \item It is showing that the instantiation of our general machinery (Definition~\ref{def:lifting-properties}) recovers the specialized definition (Definition~\ref{def:temporal-clique}).
    \item It provides an alternative characterization of temporal cliques in terms of \textit{morphisms} of temporal graphs. This generalizes the traditional definitions of cliques in static graph theory as injective homomorphisms into a graph from a complete graph. 
\end{enumerate}

\begin{proposition}\label{prop:temporal-cliques}
Let \(\kappa_{\geq k} \colon \cat{Complete}^{\geq k} \hookrightarrow \cat{Grph}\) be the subcategory of \(\cat{Grph}\) whose objects are complete graphs on at least \({k}\) vertices and let \(\tau_{\geq n} \colon \cat{T} \to \cat{I}_\mathbb{N}\) be the sub-join-semilattice of \(\cat{I}_\mathbb{N}\) whose objects are intervals of \(\cat{T}_\mathbb{N}\) length at least \(n\). Consider any graph narrative \(\Ka\) which \(\tau_n\)-satisfies \(\kappa_{\geq k}\) then all of its instantaneous snapshots \(\Ka([t,t])\) have at least \(k\) vertices. Furthermore consider any monomorphism \(f \colon \Ka \hookrightarrow \Ga\) from such a \(\Ka\) to any given cumulative graph narrative \(\Ga\). If \(\Ka\) preserves monomorphisms, then we have that: every such morphism of narratives \(f\) determines a temporal clique in \(\Ga\) (in the sense of Definition~\ref{def:temporal-clique}) and moreover all temporal cliques in \(\Ga\) are determined by morphisms of this kind. 
\end{proposition}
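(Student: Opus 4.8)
The plan is to first make Definition~\ref{def:lifting-properties} concrete here. Since $\kappa_{\geq k}$ is a full subcategory inclusion, a cumulative graph narrative $\Ka \in \cat{Cu}(\cat{I}_\mathbb{N}, \cat{Grph})$ $\tau_{\geq n}$-satisfies $\kappa_{\geq k}$ exactly when its restriction to intervals of length at least $n$ takes values in $\cat{Complete}^{\geq k}$; equivalently, $\Ka([a,b])$ is a complete graph on at least $k$ vertices for every interval $[a,b]$ of length at least $n$. Everything will rest on one structural observation about $\cat{Grph} = \cat{Set}^{\cat{SGr}}$, whose colimits are computed pointwise: for a span of complete graphs $K_A \leftarrow K_C \rightarrow K_B$, the pushout $K_A +_{K_C} K_B$ is again complete if and only if one of the two legs is surjective on vertices, in which case the pushout is isomorphic to the codomain of the other leg. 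The forward direction is the real content: any vertex of $A$ and any vertex of $B$ both missing from the image of $C$ remain non-adjacent in the pushout, because every edge of a graph pushout is the image of an edge of $K_A$ or of $K_B$.

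For the first claim I would fix an instant $[t,t]$ lying strictly between the endpoints of the time category and consider the two windows $[t-n,t]$ and $[t,t+n]$, whose unique common subinterval is $[t,t]$. The Johnstone cover $([t-n,t],[t,t+n])$ of $[t-n,t+n]$ and the cosheaf condition of Proposition~\ref{prop:def:sheaves} give $\Ka([t-n,t+n]) = \Ka([t-n,t]) +_{\Ka([t,t])} \Ka([t,t+n])$, where all three of $\Ka([t-n,t])$, $\Ka([t,t+n])$ and $\Ka([t-n,t+n])$ are complete on at least $k$ vertices. The rigidity observation then forces one leg out of $\Ka([t,t])$ to be vertex-surjective onto a graph with at least $k$ vertices, so $\Ka([t,t])$ itself has at least $k$ vertices. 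The one genuinely delicate point is an instant sitting at the boundary of the time category (e.g.\ $[0,0]$ in $\cat{I}_\mathbb{N}$): there one cannot center two long windows at $t$, and the snapshot bound can in fact fail, since the definition only constrains long windows. I would therefore either restrict this first claim to interior instants or impose that time is unbounded below; this does not affect the second claim, which reads its size bound off the windows rather than the snapshots.

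Next, given a monomorphism $f \colon \Ka \hookrightarrow \Ga$ — interpreting ``$\Ka$ preserves monomorphisms'' as the requirement that the transition maps $\Ka([a,b]) \to \Ka([c,d])$ of the cosheaf be monic — I would extract a temporal $k$-clique. Over each window $[a,b]$ of the length prescribed in Definition~\ref{def:temporal-clique}, the component $f_{[a,b]} \colon \Ka([a,b]) \hookrightarrow \Ga([a,b])$ is a monomorphism out of a complete graph on at least $k$ vertices, so its vertex image is a $k$-clique inside the accumulated graph $\Ga([a,b]) = \bigcup_{t \in [a,b]} E_t$. Monicity of the transition maps together with naturality of $f$ forces these vertex images to be carried injectively and compatibly along the structure maps, so that they are all the image of a single time-independent subset $S$ of the vertex set of $\Ga$ with $|S| \geq k$. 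By construction $S$ induces a clique in $\Ga([a,b])$ for every such window, which is precisely the condition of Definition~\ref{def:temporal-clique}.

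Conversely, from a temporal $k$-clique $S$ of $\Ga$ I would define $\Ka([a,b])$ to be the subgraph of $\Ga([a,b])$ induced on the image of $S$, taking $f$ to be the family of these inclusions. The clique condition makes each such induced subgraph complete on at least $k$ vertices over every long window, and naturality makes $f$ a monic morphism of narratives. The main obstacle — and the step I would spend the most care on — is checking that this induced family is a genuine \emph{cumulative narrative}, i.e.\ that it satisfies the cosheaf (pushout) condition of Proposition~\ref{prop:def:sheaves} rather than being a mere interval-indexed family, and dually that on the forward direction the per-window cliques really do glue into one set $S$. Both reduce to the rigidity observation combined with the monic-transition hypothesis: monicity stops the clique vertices from merging as windows grow, while rigidity certifies that the pushouts assembling long windows from shorter ones stay complete, so that $\Ka$ lands in $\cat{Complete}^{\geq k}$ throughout the range of $\tau_{\geq n}$ as Definition~\ref{def:lifting-properties} demands.
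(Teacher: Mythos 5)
Your first claim and your converse construction both track the paper's proof closely. The rigidity observation---a pushout of graphs that is complete must have at least one leg surjective on vertices---is exactly the lemma the paper opens with; your span argument at an instant \(t\) is the paper's own (it uses the span \(\Ka([t-n+1,t]) \leftarrow \Ka([t,t]) \rightarrow \Ka([t,t+n-1])\) and accordingly asserts the snapshot bound only for \(t \geq n\), so your boundary caveat is not a defect but makes explicit a restriction the paper's proof itself silently imposes); and your definition of \(\Ka\) as the \(S\)-induced sub-narrative of \(\Ga\) is verbatim the paper's converse, which is just as terse as you are about verifying the cosheaf condition of Proposition~\ref{prop:def:sheaves}. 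One small correction en route: only the ``only if'' half of your rigidity claim is sound. Since \(\cat{Grph} = \cat{Set}^{\cat{SGr}}\) is a category of multigraphs, pushouts can create parallel edges, so a pushout along a vertex-surjective leg need not be isomorphic to the codomain of the other leg; this is harmless where you use rigidity critically, but the biconditional as stated is false.

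The genuine gap is in the forward direction. You assert that monicity of the transition maps together with naturality of \(f\) forces the per-window vertex images to be ``the image of a single time-independent subset \(S\)''. This is false in general, and it is precisely where all the work of the paper's proof lies. Monic transitions prevent vertices from merging, but they do not prevent the images of different windows from differing: take a persistent clique on vertices \(u_1,\dots,u_k\) together with one extra vertex \(x\) adjacent to all of them only at a single time \(t_0\), and let \(\Ka\) assign to each interval the induced subgraph on \(\{u_1,\dots,u_k\}\), enlarged by \(x\) exactly on intervals containing \(t_0\). This is a monic cumulative narrative that \(\tau_{\geq n}\)-satisfies \(\kappa_{\geq k}\) and embeds in the ambient narrative, yet windows containing \(t_0\) have image \(\{u_1,\dots,u_k,x\}\) while the others have image \(\{u_1,\dots,u_k\}\); the per-window cliques do not glue, and the union \(\{u_1,\dots,u_k,x\}\) is \emph{not} a temporal clique in the sense of Definition~\ref{def:temporal-clique}. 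The paper instead selects a window \(\Ka([t,t+n-1])\) with the \emph{minimum} number of vertices (which exists because all windows have at least \(k\) vertices) and proves---this is the long two-case analysis occupying the second half of its proof, distinguishing \(s \notin [t,t+n-1]\) from \(s \in [t,t+n-1]\) and combining minimality with the fact that one leg of every cosheaf span must be vertex-surjective---that every vertex of this minimal window lies in \(\Ka([s,s])\) for all \(s \geq n-1\), and hence in every length-\(n\) window. Some argument of this kind must replace your one-sentence glueing step; as written, the forward direction of your proposal does not go through.
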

\begin{proof}
First of all observe that if a pushout \(L +_M R\) of a span graphs \(L \xleftarrow{\ell} M \xrightarrow{r} R\) is a complete graph, then we must have that at least one of the graph homomorphisms \(\ell\) and \(r\) must be surjective on the vertex set (if not then there would be some vertex of \(L\) not adjacent to some vertex of \(R\) in the pushout). With this in mind now consider any cumulative graph narrative \(\Ka\) which \(\tau_{\geq n}\)-satisfies \(\kappa_{\geq k}\). By Definition~\ref{def:lifting-properties} this means that for all intervals \([a,b]\) of length at least \(n\) the graph \(\Ka([a,b])\) is in the range of \(\kappa_{\geq k}\): i.e. it is a complete graph on at least \(k\) vertices. This combined with the fact that \(\Ka\) is a cumulative narrative implies that every pushout of the form \[\Ka([a,p]) +_{\Ka([p,p])} \Ka([p,b])\] yields a complete graph and hence every pair of arrows \[\Ka([a,p]) \xleftarrow{\ell} \Ka([p,p]) \xrightarrow{r} \Ka([p,b])\] must have at least one of \(\ell\) or \(r\) surjective. From this one deduces that for all times \(t \geq n\) every instantaneous graph \(\Ka([t,t])\)  must have at least \(k\) vertices: since \(\Ka\) \(\tau_{\geq n}\)-satisfies \(\kappa_{\geq k}\), the pushout of the span \[\Ka([t - n + 1, t]) +_{\Ka([t,t])} \Ka([t, t + n - 1])\] must be a complete graph on at least \(k\) vertices and this is also true of both feet of this span, thus we are done by applying the previous observation. 

Observe that, if \(S\) is a vertex set in \(\Ga\) which determines a temporal clique in the sense of Definition~\ref{def:temporal-clique}, then this immediately determines a cumulative graph narrative \(\Ka\) which \(\tau_{\geq n}\)-satisfies \(\kappa_{\geq k}\) and that has a monomorphism into \(\Ga\): for any interval \([a,b]\), \(\Ka([a,b])\) is defined as the restriction (i.e. induced subgraph) of \(\Ga([a,b])\) to the vertices in \(S\). The fact that \(\Ka\) preserves monomorphisms follows since \(\Ga\) does. 

For the converse direction, notice that, if \(\Ka\) preserves monomorphisms (i.e. the projection maps of its cosheaf structure are monomorphisms), then, by what we just argued, for any interval \([a,b]\) we have \(|\Ka([a,b])| \geq |\Ka([a,a])| \geq k\). Thus, since all of the graphs of sections have a lower bound on their size, we have that there must exist some time \(t\) such that \(\Ka([t, t+n -1])\) has minimum number of vertices. We claim that the vertex-set of \(\Ka([t, t+n-1])\) defines a temporal clique in \(\Ga\) (in the sense of Definition~\ref{def:temporal-clique}). To that end, all that we need to show is that the entire vertex set of \(\Ka([t, t+n-1])\) is active in every interval of length exactly \(n\). To see why, note that, since all of the projection maps in the cosheaf \(\Ka\) are monic, every interval of length at least \(n\) will contain all of the vertex set of \(\Ka([t, t+n-1])\); furthermore each pair of vertices will be connected by at least one edge in the graphs associated to such intervals since \(\Ka\) \(\tau_{\geq n}\)-satisfies \(\kappa_{\geq k}\). 

Thus, to conclude the proof, it suffices to show that for all times \(s \geq n - 1\) we have that every vertex of \(\Ka([t, t+n-1])\) is contained in \(\Ka([s,s])\) (notice that for smaller \(s\) there is nothing to show since there is no interval \([s',s]\) of length at least \(n\) which needs to witness a clique on the vertex set of \(\Ka([t, t+n-1])\)). To that end we distinguish three cases.
\begin{enumerate}
    \item Suppose \(s \not \in [t, t + n - 1]\), then, if \(s > t + n -1\), consider the diagram of monomorphisms
    % https://q.uiver.app/#q=WzAsNCxbMiwyLCJcXEthKFtzLHNdKSJdLFswLDEsIlxcS2EoW3QsdCArIG4gLSAxXSkiXSxbMywxLCJcXEthKFtzLHMgKyBuIC0gMV0pIl0sWzAsMCwiXFxLYShbdCxzXSkiXSxbMCwyLCJyIiwwLHsic3R5bGUiOnsidGFpbCI6eyJuYW1lIjoiaG9vayIsInNpZGUiOiJ0b3AifX19XSxbMCwzLCJcXGVsbCIsMix7InN0eWxlIjp7InRhaWwiOnsibmFtZSI6Imhvb2siLCJzaWRlIjoidG9wIn19fV0sWzEsMywiIiwwLHsic3R5bGUiOnsidGFpbCI6eyJuYW1lIjoiaG9vayIsInNpZGUiOiJ0b3AifX19XV0=
\[\begin{tikzcd}
	{\Ka([t,s])} \\
	{\Ka([t,t + n - 1])} &&& {\Ka([s,s + n - 1])} \\
	&& {\Ka([s,s])}
	\arrow["r", hook, from=3-3, to=2-4]
	\arrow["\ell"', hook, from=3-3, to=1-1]
	\arrow[hook, from=2-1, to=1-1]
\end{tikzcd}\]
and observe by our previous arguments that \(\ell\) or \(r\) must be surjective on vertices. We claim that \(\ell\) is always a vertex-surjection: if \(r\) is surjective on vertices, then, by  the minimality of the number of vertices of \(\Ka([t,t + n - 1])\) and the fact that the diagram is monic, we must have that \(\ell\) is surjective on vertices. But then this yields the desired result since we have a diagram of monomorphisms. Otherwise, if \(s < t\) either \(s < n - 1\) (in which case there is nothing to show), or a specular argument to the one we just presented for case of \(s > t + n - 1\) suffices. 
\item If \(s \in [t, t+n-1]\), then consider the following diagram
% https://q.uiver.app/#q=WzAsOCxbMiwxLCJcXEthKFtzLHNdKSJdLFsyLDAsIlxcS2EoW3QsdCArIG4gLSAxXSkiXSxbMywwLCJcXEthKFtzLHMgKyBuIC0gMV0pIl0sWzEsMCwiXFxLYShbcyAtIG4gKzEsc10pIl0sWzEsMSwiXFxLYShbdCx0XSkiXSxbMCwwLCJcXEthKFt0IC0gbiArMSx0XSkiXSxbMywxLCJcXEthKFt0ICsgbiAtIDEsIHQgKyBuIC0gMV0pIl0sWzQsMCwiXFxLYShbdCArIG4gLSAxLCB0ICsgMihuIC0gMSldKSJdLFswLDIsIlxcYmV0YSIsMSx7ImxhYmVsX3Bvc2l0aW9uIjoyMCwic3R5bGUiOnsidGFpbCI6eyJuYW1lIjoiaG9vayIsInNpZGUiOiJ0b3AifX19XSxbMCwxLCIiLDIseyJzdHlsZSI6eyJ0YWlsIjp7Im5hbWUiOiJob29rIiwic2lkZSI6InRvcCJ9fX1dLFswLDMsIlxcYWxwaGEiLDEseyJsYWJlbF9wb3NpdGlvbiI6MjAsInN0eWxlIjp7InRhaWwiOnsibmFtZSI6Imhvb2siLCJzaWRlIjoidG9wIn19fV0sWzQsNSwiIiwwLHsic3R5bGUiOnsidGFpbCI6eyJuYW1lIjoiaG9vayIsInNpZGUiOiJ0b3AifX19XSxbNCwxLCJmIiwxLHsibGFiZWxfcG9zaXRpb24iOjIwLCJzdHlsZSI6eyJ0YWlsIjp7Im5hbWUiOiJob29rIiwic2lkZSI6InRvcCJ9fX1dLFs2LDcsIiIsMSx7InN0eWxlIjp7InRhaWwiOnsibmFtZSI6Imhvb2siLCJzaWRlIjoidG9wIn19fV0sWzYsMSwiZyIsMSx7ImxhYmVsX3Bvc2l0aW9uIjoyMCwic3R5bGUiOnsidGFpbCI6eyJuYW1lIjoiaG9vayIsInNpZGUiOiJ0b3AifX19XSxbNCwzLCIiLDEseyJzdHlsZSI6eyJ0YWlsIjp7Im5hbWUiOiJob29rIiwic2lkZSI6InRvcCJ9fX1dLFs2LDIsIiIsMSx7InN0eWxlIjp7InRhaWwiOnsibmFtZSI6Imhvb2siLCJzaWRlIjoidG9wIn19fV1d
\[
\adjustbox{scale=1.5, max width=0.925\textwidth}{%,center}{
\begin{tikzcd}
	{\Ka([t - n +1,t])} & {\Ka([s - n +1,s])} & {\Ka([t,t + n - 1])} & {\Ka([s,s + n - 1])} & {\Ka([t + n - 1, t + 2(n - 1)])} \\
	& {\Ka([t,t])} & {\Ka([s,s])} & {\Ka([t + n - 1, t + n - 1])}
	\arrow["\beta"{description, pos=0.2}, hook, from=2-3, to=1-4]
	\arrow[hook, from=2-3, to=1-3]
	\arrow["\alpha"{description, pos=0.2}, hook, from=2-3, to=1-2]
	\arrow[hook, from=2-2, to=1-1]
	\arrow["f"{description, pos=0.2}, hook, from=2-2, to=1-3]
	\arrow[hook, from=2-4, to=1-5]
	\arrow["g"{description, pos=0.2}, hook, from=2-4, to=1-3]
	\arrow[hook, from=2-2, to=1-2]
	\arrow[hook, from=2-4, to=1-4]
\end{tikzcd}
}
\]
and observe that, by the same minimality arguments as in the previous point, we have that \(f\) and \(g\) must be surjective on vertices. By what we argued earlier, one of \(\alpha\) and \(\beta\) must be surjective on vertices; this combined with the fact that there are monomorphisms \[\Ka([t,t]) \hookrightarrow \Ka([s - n +1, s]) \text{ and } \Ka([t + n -1,t + n -1]) \hookrightarrow [s, s + n -1]\] (since \(t \in [s - n +1, s]\) and \(t + n -1 \in [s, s + n -1]\)) implies that every vertex of \(\Ka([t,t + n -1])\) is contained in \(\Ka([s,s])\) as desired. 
\end{enumerate}
\end{proof}

In the world of static graphs, it is well known that dual to the notion of a clique in a graph is that of a \textit{proper coloring}. This duality we refer to is not merely aesthetics, it is formal: if a clique in a graph \(G\) is a monomorphism from a complete graph \(K_n\) into \(G\), then a \textit{coloring} of \(G\) is a monomorphism \(K_n \hookrightarrow G\) in the \textit{opposite category}. Note that this highlights the fact that different categories of graphs give rise to different notions of coloring via this definition (for instance note that, although the typical notion of a graph coloring is defined in terms of \textit{irreflexive} graphs, the definition given above can be stated in any category of graphs). 

In any mature theory of temporal data and at the very least any theory of temporal graphs, one would expect there to be similar categorical dualities at play. And indeed there are: by dualizing Proposition~\ref{prop:temporal-cliques}, one can recover different notions of temporal coloring depending on whether one studies the cumulative or persistent perspectives. This is an illustration of a much deeper phenomenon whereby stating properties of graphs in a categorical way allows us to both lift them to corresponding temporal analogues while also retaining the ability to explore how they behave by categorical duality. 

\section{Discussion: Towards a General Theory of Temporal Data}\label{sec:towards-a-general-theory}
Here we tackled the problem of building a robust and general theory of temporal data. First we distilled a list of five desiderata (see \ref{desideratum-1}, \ref{desideratum-2}, \ref{desideratum-3}, \ref{desideratum-4}, \ref{desideratum-5} in Section~\ref{sec:intro}) for any such theory by drawing inspiration from the study of temporal graphs, a relatively well-developed branch of the mathematics of time-varying data. 

Given this list of desiderata, we introduced the notion of a \textit{narrative}. This is a kind of sheaf on a poset of intervals (a join-semilattice thereof, to be precise) which assigns to each interval of time an object of a given category and which relates the objects assigned to different intervals via appropriate restriction maps. The structure of a sheaf arises immediately from considerations on how to encode the time-varying nature of data, which is not specific to the kinds of mathematical object one chooses to study (Desideratum~\ref{desideratum-4}). This object-agnosticism allows us to use of a single set of definitions to think of time varying graphs or simplicial complexes or metric spaces or topological spaces or groups or beyond. We expect the systematic study of different application areas within this formalism to be a very fruitful line of future work. Examples abound, but, in favor of concreteness, we shall briefly mention two such ideas: 
\begin{itemize}
    \item The shortest paths problem can be categorified in terms of the free category functor~\cite{algebraic-path-problem}. Since this is an adjoint, it satisfies the continuity requirements to be a change of base functor (Proposition~\ref{prop:change-of-base-covariant}) and thus one could define and study temporal versions of the algebraic path problem (a vast generalization of shortest paths) by relating narratives of graphs to narratives of categories. 
    \item Metabolic networks are cumulative representations of the processes that determine the physiological and biochemical properties of a cell. These are naturally temporal objects since different reactions may occur at different times. Since reaction networks, one of the most natural data structures to represent chemical reactions, can be encoded as copresheaves~\cite{aduddell2023compositional}, one can study time varying reaction networks via appropriate narratives valued in these categories. 
\end{itemize}

Encoding temporal data via narratives equips us with a natural choice of morphism of temporal data, namely: morphism of sheaves. Thus we find that narratives assemble into \textit{categories} (Desideratum~\ref{desideratum-1}), a fact that allows us to leverage categorical duality to find that narratives come in two flavours (cumulative and persistent, Desideratum~\ref{desideratum-2} depending on how information is being tracked over time. In sufficiently nice categories, persistent and cumulative narratives are furthermore connected via an adjunction (Theorem~\ref{thm:adjunction}) which allows one to convert one description into the other. As is often the case in mathematics, we expect this adjunction to play an important role for many categories of narratives.

To be able to lift notions from static settings to temporal ones, we find that it suffices to first determine canonical ways to change the temporal resolution of narratives or to change the underlying categories in which they are valued. Both of these tasks can be achieved functorially (Propositions~\ref{prop:change-of-base-covariant} and~\ref{prop:change-of-base-contravariant} and Proposition~\ref{prop:change-temp-resolution}) and, embracing minimalism, one finds that they are all that is needed to develop a framework for the systematic lifting of static properties to their temporal counterparts~\ref{desideratum-3}. 

Finally, addressing Desideratum~\ref{desideratum-4}, we showed how to obtain change of base functors (Propositions~\ref{prop:change-of-base-covariant} and~\ref{prop:change-of-base-contravariant}) which allows for the conversion of narratives valued in one category to another. In the interest of a self-contained presentation, we focused on only one application of these functors; namely that of building a general machinery (Definition~\ref{def:lifting-properties}) capable of lifting the definition of a property from any category to suitable narratives valued in it. However, the change of base functors have more far reaching applications than this and should instead be thought of as tools for systematically relating different kinds of narratives arising from the same dynamical system. This line of enquiry deserves its own individual treatment and we believe it to be a fascinating new direction for future work. 

In so far as the connection between data and dynamical systems is concerned (Desideratum~\ref{desideratum-5}), our contribution here is to place both the theory of dynamical systems and the theory of temporal data on the same mathematical and linguistic footing. This relies on the fact that Schultz, Spivak and Vasilakopoulou's \textit{interval sheaves}~\cite{schultz2017temporal} provide an approach to dynamical systems which is very closely related (both linguistically and mathematically) to our notion of narratives: both are defined in terms of sheaves on categories of intervals. We anticipate that exploring this newfound mathematical proximity between the way one represents temporal data and the axiomatic approach for the theory of dynamical systems will be a very fruitful line of further research in the years to come.

\enlargethispage{20pt}

%\ack{We would like to thank Justin Curry for helpful discussions and for observing connections of our work to Topological Data Analysis. We also thank Kevin Carlson and an anonymous reviewer for their helpful suggestions and corrections. }

%%%%%%%%%% Insert bibliography here %%%%%%%%%%%%%%

\vskip2pc

\section*{Acknowledgment}
We would like to thank Justin Curry for helpful discussions and for observing connections of our work to topological data analysis. We also thank Kevin Carlson and anonymous reviewers for their helpful suggestions and corrections.

%%%%%%%%%% Insert bibliography here %%%%%%%%%%%%%%

\vskip2pc

\bibliography{biblio}
\bibliographystyle{acm}

\end{document}